\newcommand{\eqq}[2]{\begin{equation}  #1  \label{#2} \end{equation}    }
\newcommand{\kr}[1]{{{#1}}}
\newcommand{\al}{\alpha}
\newcommand{\hzj}{{}_{0}H^{1}}
\newcommand{\hzjT}{\hzj(0,T)}
\newcommand{\hza}{{}_{0}H^{\al}}
\newcommand{\hzaT}{\hza(0,T)}
\newcommand*{\spann}{\mathop{\mathrm{span}}}
\newcommand{\hd}{\hspace{0.2cm}}
\newcommand{\no}{\noindent}
\newcommand{\m}[1]{\mbox{#1}}
\newcommand{\OT}{\Omega^{T}}
\newcommand{\N}{\mathbb{N}}
\newcommand{\ppp}{\partial}
\newcommand{\jd}{\frac{1}{2}}
\newtheorem{rem}{{\textbf {Remark}}}
\newtheorem{lem}{{\textbf {Lemma}}}
\newtheorem{prop}{{\textbf {Proposition}}}
\newtheorem{theorem}{\textbf {Theorem}}
\newtheorem{coro}{\textbf  {Corollary} }
\newcommand{\intt}{\int_{0}^{t}}
\newcommand{\intT}{\int_{0}^{T}}
\newcommand{\ia}{I^{\alpha}}
\newcommand{\ija}{I^{1-\alpha}}
\newcommand{\ep}{\varepsilon}
\newcommand{\tamj}{(t-\tau)^{\alpha-1}}
\newcommand{\dt}{d\tau}
\newcommand{\ga}{\Gamma(\alpha)}
\newcommand{\gja}{\Gamma(1-\alpha)}
\newcommand{\jga}{\frac{1}{\ga}}
\newcommand{\jgja}{\frac{1}{\gja}}
\newcommand{\lap}{\Delta}
\newcommand{\la}{\lambda}
\newcommand{\rr}{\mathbb{R}}
\newcommand{\vn}{\varphi_{n}}
\newcommand{\vk}{\varphi_{k}}
\newcommand{\vm}{\varphi_{m}}
\newcommand{\vp}{\varphi}
\newcommand{\da}{D^{\alpha}}
\newcommand{\dda}{D^{2\alpha}}
\newcommand{\ra}{\partial^{\alpha}}
\newcommand{\rb}{\partial^{\beta}}
\newcommand{\un}{u^{n}}
\newcommand{\io}{\int_{\Omega}}
\newcommand{\iop}{\int_{\partial \Omega}}
\newcommand{\izt}{\int_{0}^{t}}
\newcommand{\izth}{\int_{0}^{t-h}}
\newcommand{\iht}{\int_{t-h}^{t}}
\newcommand{\ta}{(t-\tau)^{-\alpha}}
\newcommand{\taj}{(t-\tau)^{-\alpha-1}}
\newcommand{\ld}{L^{2}(\Omega)}
\newcommand{\ldT}{L^{2}(\Omega^{T})}
\newcommand{\hjzo}{H^{1}_{0}(\Omega)}
\newcommand{\hmj}{H^{-1}}
\newcommand{\hmjo}{H^{-1}(\Omega)}
\newcommand{\sij}{\sum_{i,j=1}^{N}}
\newcommand{\sj}{\sum_{j=1}^{N}}
\newcommand{\aij}{a_{i,j}}
\newcommand{\anij}{a^{n}_{i,j}}
\newcommand{\aijxt}{a_{i,j}(x,t)}
\newcommand{\anijxt}{a^{n}_{i,j}(x,t)}
\newcommand{\ankijxt}{a^{n_{k}}_{i,j}(x,t)}
\newcommand{\bjn}{b^{n}_{j}}
\newcommand{\bjnxt}{\bjn(x,t)}
\newcommand{\bjnk}{b^{n_{k}}_{j}}
\newcommand{\sun}{\sum_{k=1}^{n}}
\newcommand{\cnk}{c_{n,k}}
\newcommand{\cnkt}{\cnk(t)}
\newcommand{\cnm}{c_{n,m}}
\newcommand{\cnmt}{\cnm(t)}
\newcommand{\cn}{c_{n}}
\newcommand{\cnt}{c_{n}(t)}
\newcommand{\afn}{\widetilde{A}{}^{n}}
\newcommand{\vkx}{\vk(x)}
\newcommand{\vmx}{\vm(x)}
\newcommand{\fn}{f^{n}}
\newcommand{\fnxt}{\fn (x,t)}
\newcommand{\fjn}{f_{\frac{1}{n}}}
\newcommand{\fjnxt}{\fjn (x,t)}
\newcommand{\fjnk}{f_{1/n_{k}}}
\newcommand{\fjnkxt}{\fjnk (x,t)}
\newcommand{\tma}{t^{1-\alpha}}
\newcommand{\xtj}{X(T_{1})}
\newcommand{\ya}{Y_{\alpha}}
\newcommand{\pon}{\frac{\partial u }{\partial n}}
\newcommand{\sn}[1]{\sum_{#1=1}^{N}}
\newcommand{\ut}{\widetilde{u}}
\newcommand{\unk}{u^{n_{k}}}
\newcommand{\una}[1]{\un (\cdot, #1 )}
\newcommand{\nol}[1]{ \| #1 \|_{L^{2}(\Omega)} }
\newcommand{\nolk}[1]{ \| #1 \|_{L^{2}(\Omega)}^{2} }
\newcommand{\nomk}[1]{ \| #1 \|_{H^{-1}(\Omega)}^{2} }
\newcommand{\ete}{\eta_{\ep}}
\newcommand{\etjn}{\eta_{\frac{1}{n}}}
\newcommand{\etet}{\ete(t+t_{0})}
\newcommand{\unct}{\un (\cdot, t )}
\newcommand{\ct}{(\cdot , t)}
\newcommand{\ddt}{\frac{d}{dt}}
\newcommand*{\esssup}{\mathop{\mathrm{ess\hspace{0.05cm}sup}}\limits}
\newcommand{\sumi}{\sum_{k=0}^{\infty}}
\newcommand{\czb}{\bar{c}_{0}}
\newcommand{\limk}{\lim_{k\rightarrow \infty}}
\newcommand{\hk}{\bar{H}}
\newcommand{\hkk}{\hk^{k}}
\newcommand{\hkks}{(\hk^{k})^{\ast}}
\newcommand{\hkmjs}{(\hk^{2m+1})^{\ast}}
\newcommand{\hkkjs}{(\hk^{2k+1})^{\ast}}
\newcommand{\rlm}{\partial^{m \alpha}}
\newcommand{\inT}{\int_{0}^{T}}
\begin{document}
\title{\bf Initial-boundary value problems for fractional diffusion equations
with time-dependent coeffcients}

\author{ Adam Kubica\footnote{Department of Mathematics and Information
Sciences, Warsaw University of Technology, ul. Koszykowa 75, 00-662 Warsaw,
Poland, E-mail addresses:
A.Kubica@mini.pw.edu.pl},
Masahiro Yamamoto\footnote{Departament of Mathematical Sciences,
The University of Tokyo, Komaba, Meguro, Tokyo - 153, Japan}
\footnote{Corresponding author. E-mail addresses:
myama@ms.u-tokyo.ac.jp (M. Yamamoto)
}
}
\maketitle

\abstract{
We discuss an initial-boundary value problem for a fractional diffusion
equation
with Caputo time-fractional derivative where the coefficients are
dependent on spatial and time variables and the zero Dirichlet boundary
condition is attached.
We prove the unique existence of weak and regular solutions.
}

\vspace{0.2cm}

\no 2010 \textbf{Mathematics Subject Classification}: 35R11, 35K45, 26A33,
34A08.

\section{Introduction}

In this paper we study a parabolic type equation with time fractional Caputo
derivative and general elliptic operator. This problem were considered in many
papers (see \cite{Vasseur}, \cite{Clement}, \cite{Pruss},  \cite{Zacher_hab},
\cite{Zacher_Vol}, \cite{Zacher}), however, in our opinion, it is not
completely understand yet. The main issue which should be explored  more deeply
is the meaning of initial condition $u_{|t=0}$ and the correctness of weak
formulation of the Caputo derivative given for example  in \cite{Zacher}.
In this paper we solved this problem only partially and we will address to it
in another paper. Our results suggest that equations with the Caputo derivative
of order $\alpha \in (0,1)$ requires more regularity of data if $\alpha$ is
equal to or less than $\frac{1}{2}$. Under additional assumptions on data,
we obtain the continuity of solution, but the continuity holds in some
dual space which order depends \m{on $\alpha$.}

The second contribution of our work is the study of general elliptic
operator for which one can not apply Fourier expansion of solution (see
\cite{Yamamoto_1}, \cite{Yamamoto_2}) and it is impossible to reduce the
problem to ordinary fractional equation.

Finally, our approach follows  standard procedure for classical parabolic
problems: first we construct  approximate solution, next we obtain a priori
estimate and further we obtain solution by the weak compactness argument.

Now we recall the definitions of the fractional integration $\ia$  and
the fractional Riemann-Liouville \ derivative
\eqq{\ia f (t) = \jga \izt \tamj  f(\tau) \dt \hd  \m{ for }\alpha>0,}{fI}
\eqq{\partial^{\alpha}f(t) =\ddt \ija f(t)=\jgja \ddt \izt \ta f(\tau)
\dt \m{ for }\alpha\in (0,1).}{fRL}
The formula for $\ia f$ is meaningful for $f\in L^{1}$.
However the formula for the Riemann-Liouville \ derivative  requires more regularity of $f$
and is well defined at least for absolutely continuous $f$ (see
proposition~\ref{propthree} in the appendix) and then $\partial^{\alpha}f $
\m{is in $L^{1}$.} The problem which we shall consider, involves the
fractional Caputo derivative
\eqq{\da f (t) = \partial^{\alpha}[f(\cdot)- f(0)](t)=  \jgja \ddt \izt
\ta [f(\tau)- f(0)]\dt,}{fC}
and this formula is again meaningful for absolutely continuous function $f$.

The aim of this paper is to analyze partial differential equations of
parabolic type which contain the fractional Caputo derivatives.
If we deal with weak solutions, then the Caputo fractional derivative should
be understood in a suitable way. To be more precise we have to formulate the
problem which we analyze in this paper.

Assume that $T<\infty$ and   $\Omega \subset \rr^{N}$ is a  bounded domain
with  smooth boundary, where $N\geq 2$.  We set
$$
\OT = \Omega \times (0,T).
$$
We shall consider the following
problem
\eqq{\left\{
\begin{array}{rllll}
\da u&=& Lu + f & \m{ in } & \Omega^{T}, \\
u_{|\partial \Omega} &=&0 & \m{ for  } & t\in (0,T) \\
u_{|t=0}&=&u_{0} & \m{ in } & \Omega, \\
\end{array}
\right.
}{main}
where
\eqq{Lu(x,t)= \sij  \partial_{i}(\aijxt \partial_{j}u(x,t)) +\sj b_{j}(x,t)
\partial_{j}u(x,t)+ c(x,t)u(x,t),   }{elip}
$\partial_{i}=\frac{\partial}{\partial x_{i}}$ for $i=1, \dots, N$,
and by $\da$ we denote the Caputo fractional time derivative, i.e.
\eqq{
\da u(x,t)=  \jgja \ddt \izt \ta [u(x,\tau)-u(x,0)] \dt.
}{c6}
In the whole paper, the fractional integration and the fractional
differentiation are related only with time variable, and
\eqq{
\ia w(x,t)= \jga \izt \tamj w(x,\tau) \dt, \hd
\ra u(x,t)= \ddt \ija [u(x, \cdot )](t).}{e33}

The definition of the Caputo derivative requires some explanations.
It can be written shortly as $\da u(x,t)= \ddt \ija [u(x,\cdot)-u(x,0)](t)$,
and $u(x, 0)$ is involved.
Therefore we have to guarantee the existence of $u_{|t=0}$ in some sense  and
initial condition (\ref{main})${}_{3}$ should be fulfilled. If these two
demands are satisfied, then for problem (\ref{main}) we could set
\[
\da u(x,t)= \ddt \ija [u(x, \cdot )-u_{0}(x)](t).
\]
The above formula is a starting point in formulating a weak form of the Caputo
derivative related with the problem~(\ref{main}) (we follow \cite{Zacher}).
We shall show that our construction of the solution of (\ref{main}) will fulfil these two demands, at least in the case of $L=\Delta$ (see theorem~\ref{malealpha}). This issue for the general elliptic operator will be examined
in another paper.

We assume that the operator $L$ is uniform elliptic, i.e.,
there exist positive constants $\la$, $\mu$  such that
\eqq{\la |\xi|^{2} \leq \sij \aijxt \xi_{i}\xi_{j} \leq \mu |\xi|^{2}
\hd   \hd \m{ for }  \hd \xi \in \rr^{n}, \hd   t\in [0,T], }{elipt}
with measurable coefficients $\aij$ and $\aij=a_{j,i}$.

We  recall the result by Zacher \cite{Zacher} concerning weak solutions of
(\ref{main}). We introduce notation
\[
W^{{\alpha}}(u_{0}, H^{1}_{0}(\Omega), L^{2}(\Omega))= \{u \in L^{2}(0,T;H^{1}_{0}
(\Omega)): \hd I^{1-\alpha}(u-u_{0})\in {}_{0}H^{1}(0,T;H^{-1}(\Omega)) \},
\]
where the subscript $0$ of ${}_{0}H^{1}$ means vanishing of the
trace for $t=0$ and $AC:= AC[0,T]$ denotes
the space of absolutely continuous functions defined on $[0,T]$
(see definition 1.2, chap. 1 \cite{Samko}).
The following theorem is a special case of theorem~3.1 \cite{Zacher}
(see also corollary~4.1 in \cite{Zacher}).

\begin{theorem}[\cite{Zacher}]
Assume that $\Omega \subseteq \rr^{N}$ is a smooth bounded domain,
$u_{0}\in L^{2}(\Omega)$, $f\in L^{2}(0,T;H^{-1}(\Omega))$,  $b_{j},
c\in L^{\infty}(\OT)$ and (\ref{elipt}) holds. Then there exists a unique
weak solution $u\in W^{{\alpha}}(u_{0}, H^{1}_{0}(\Omega), L^{2}(\Omega))$
of (\ref{main}), i.e.,
\[
\frac{d}{dt} \io I^{1-\alpha}[u(x,t)-u_{0}(x)]\vp (x)dx+ \sij \io \aijxt
\partial_{j}u(x,t)\partial_{i}\vp(x)dx
\]
\eqq{
=\sj \io b_{j}(x,t)\partial_{j}u(x,t)\vp(x)dx + \io c(x,t)u(x,t)\vp(x) dx
+ \langle f(t), \vp\rangle_{H^{-1}\times H^{1}_{0}(\Omega)}
}{a10}
holds for all $\vp\in H^{1}_{0}(\Omega)$ and a.a. $t\in(0,T)$.
Furthermore, the following estimate
\eqq{
\| I^{1-\alpha}(u-u_{0}) \|_{H^{1}(0,T;H^{-1}(\Omega))}
+ \| u \|_{L^{2}(0,T;H^{1}_{0}(\Omega))}
\leq C [\| u_{0} \|_{L^{2}(\Omega)}+ \| f \|_{L^{2}(0,T;H^{-1}(\Omega))}]
}{a11}
holds.
\label{zacher}
\end{theorem}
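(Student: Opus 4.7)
The plan is the classical three-step Galerkin procedure adapted to the Caputo setting: construct finite-dimensional approximations, derive uniform a priori estimates via a fractional coercivity identity, and pass to the limit by weak compactness; uniqueness will follow from the same energy estimate applied to the difference of two solutions.

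\emph{Step 1 (Galerkin approximation).} Let $\{\vp_k\}_{k\ge 1}$ be the $\ld$-orthonormal eigenbasis of $-\lap$ with zero Dirichlet data. I would seek
\[
u^n(x,t)=\sum_{k=1}^n c_{n,k}(t)\vp_k(x),\qquad c_{n,k}(0)=\langle u_0,\vp_k\rangle_{\ld},
\]
solving the projection of (\ref{main}) onto $\spann\{\vp_1,\dots,\vp_n\}$. Applying $\ia$ transforms the $n$-dimensional system into a linear Volterra system with weakly singular kernel $\tamj$ and $L^\infty$ coefficients; existence of $c_{n,k}\in C[0,T]$ follows from a standard fixed-point argument on short intervals iterated up to $T$.

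\emph{Step 2 (a priori estimate).} Test the approximate equation against $u^n$ itself. The principal part gives the coercive quantity $\la\|\nabla u^n\|_{\ld}^2$ by (\ref{elipt}); the lower-order terms $b_j,c\in L^\infty$ and the source $f$ are absorbed by Cauchy-Schwarz, the $\hmjo$-$\hjzo$ duality, Poincar\'e and Young with a small parameter. The decisive ingredient is the fractional coercivity inequality of Vergara-Zacher,
\[
\Big\langle \ddt \ija (u^n-u_0^n),\,u^n\Big\rangle_{\ld}\;\ge\;\frac{1}{2}\,\ddt \ija\big[\|u^n(\cdot,t)\|_{\ld}^2-\|u_0^n\|_{\ld}^2\big],
\]
which holds because the kernel $t^{-\alpha}/\gja$ is nonnegative and nonincreasing. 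Integrating in time, using that $\ija$ of an $L^1$ function has vanishing trace at $t=0$, and closing with a fractional Gronwall (Henry-type) inequality yields (\ref{a11}) for $u^n$ uniformly in $n$.

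\emph{Step 3 (limit passage and uniqueness).} Weak compactness produces a subsequence with $u^{n_k}\cw u$ in $L^2(0,T;\hjzo)$ and $\ija(u^{n_k}-u_0^{n_k})\cw \ija(u-u_0)$ in ${}_0H^1(0,T;\hmjo)$, the vanishing trace at $t=0$ being preserved, so $u\in W^{\alpha}(u_0,\hjzo,\ld)$. Because $\aij,b_j,c\in L^\infty$ and $\partial_j u^{n_k}$ converges only weakly in $\ldT$, one first tests against $\vp\in\spann\{\vp_1,\dots,\vp_m\}$ with $m\le n_k$ and then extends to arbitrary $\vp\in\hjzo$ by density, obtaining (\ref{a10}). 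For uniqueness, the difference $v$ of two solutions satisfies the weak equation with $f=0$, $u_0=0$; testing with $v$ and applying the same fractional coercivity together with a Gronwall bound forces $v\equiv 0$. The hardest step is the fractional coercivity itself: the ordinary chain rule fails for $\partial^\alpha$ and for $\ddt\ija$, so the nonlocal identity above has to be verified at the level of the weak time derivative for functions in $L^2(0,T;\hjzo)$ whose convolution with $t^{-\alpha}/\gja$ lies only in $H^1(0,T;\hmjo)$; the careful handling of $u_0$ inside the Caputo convolution, and the identification in the limit of the vanishing trace of $\ija(u-u_0)$ at $t=0$, constitute the technical core of the argument.
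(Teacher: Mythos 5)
Your plan is correct and follows the same Galerkin--energy--compactness architecture that this paper uses; note, though, that the paper does not itself prove Theorem~\ref{zacher} (it is quoted from \cite{Zacher}), so the natural comparison is with the paper's proof of the analogous Theorem~\ref{mainweak}. The one genuine divergence is the coercivity ingredient. You rely on the Vergara--Zacher inequality
\[
\Big\langle \ddt \ija (u^n-u_0^n),\,u^n\Big\rangle_{\ld}\;\ge\;\tfrac{1}{2}\,\ddt \ija\big[\|u^n(\cdot,t)\|_{\ld}^2-\|u_0^n\|_{\ld}^2\big],
\]
which suffices for (\ref{a11}); the paper instead mollifies $\aij$, $b_j$, $c$ and $f$ in time so that the Galerkin coefficients lie in the space $X(T)$ of Lemma~\ref{approx}, i.e.\ are absolutely continuous with $t^{1-\alpha}c_{n,k}'$ continuous, and this extra regularity lets it use the \emph{exact} identity of Lemma~\ref{Ta}, whose additional nonnegative terms produce the $H^{\frac{\alpha}{2}}(0,T;\ld)$ control in (\ref{a12}) that your inequality throws away (but which Theorem~\ref{zacher} does not claim). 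In exchange, your unmollified coefficients make the limit passage easier: $\aijxt\,\partial_j u^{n_k}$ converges weakly simply because multiplication by a fixed $L^\infty$ function is weakly continuous, whereas the paper must combine strong convergence of $\anij$ with weak convergence of $\nabla\unk$. Two points your write-up would have to make explicit: for merely continuous Volterra solutions $c_{n,k}$ one still has $\ija(c_{n,k}-c_{n,k}(0))\in {}_{0}H^{1}(0,T)$, which is precisely the hypothesis under which the coercivity inequality is justified by approximation (cf.\ Corollary~\ref{lowestiYa}); and in the uniqueness step the pairing $\langle\ddt\ija v,v\rangle$ is only an $\hmjo\times\hjzo$ duality, so one must first pass through spectral projections of $v$ (as in the computation (\ref{nowea})) before invoking coercivity --- you correctly flag this as the technical core, and it is where most of the remaining work lies.
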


\begin{rem}
By theorem~\ref{zacher},  for given $u_{0}\in \ld $
$u$ satisfying (\ref{a10}) exists uniquely.
However this result does not guarantee that $u_{|t=0}$ can be defined
adequately.  In particular, it is not clear that $u_{|t=0}=u_{0}$.
In other words, the first term on left-hand side of (\ref{a10})
may not represent the Caputo derivative. However, in the paper \cite{Zacher},
it is remarked (see p.8) that if $\frac{d}{dt}  I^{1-\alpha}[u(x,t)-u_{0}(x)]$
is in $C([0,T];\hmjo)$, then $u \in C([0,T];\hmjo)$ and $u(0)=u_{0}$. In this
paper we develop  this idea in order to overcome the difficulties related to
the definition of the initial value of solution
(see proposition~\ref{initialsecond} in appendix).
\label{remone}
\end{rem}

In the present paper, we first obtain a result similar to
theorem~\ref{zacher}, but its proof is based on special approximating
sequence, which further enables us to improve the regularity of the solutions.

\begin{theorem}
Assume that $\alpha\in (0,1)$, $T>0$, $u_{0}\in L^{2}(\Omega)$ and
$f\in L^{2}(0,T;H^{-1}(\Omega))$. Assume that  (\ref{elipt}) holds
and for some $p_{1},p_{2}\in [2,\frac{2N}{N-2} )$ we have $b\in L^{\infty}(0,T;L^{\frac{2p_{1}}{p_{1}-2}}(\Omega))$, \hd
$c\in  L^{\infty}(0,T;L^{\frac{p_{2}}{p_{2}-2}}(\Omega))$.
Then there exists a unique weak solution $u\in W^{{\alpha}}(u_{0}, H^{1}_{0}(\Omega),
L^{2}(\Omega))$ of (\ref{main}), i.e., (\ref{a10}) holds and
$u$ satisfies  the following  estimate
\[
\| I^{1-\alpha}(u-u_{0}) \|_{H^{1}(0,T;H^{-1}(\Omega))}
+ \| u \|_{L^{2}(0,T;H^{1}_{0}(\Omega))}
+\| u \|_{H^{\frac{\alpha}{2}  }(0,T;\ld ) }
\]
\eqq{ \leq  C \left(\| u_{0} \|_{L^{2}(\Omega)}+ \| f \|_{L^{2}(0,T;H^{-1}(\Omega))}\right), }
{a12}
where $C$ depends only on $\alpha$, $\mu$, $\lambda$, $T$ $\| b \|_
{L^{\infty}(0,T;L^{\frac{2p_{1}}{p_{1}-2}}(\Omega) )}$, $\| c \|_{L^{\infty}
(0,T;L^{\frac{p_{2}}{p_{2}-2}}(\Omega) )}$.

\no Furthermore, if $\alpha >\frac{1}{2}$, then $u\in C([0,T];\hmjo)$  and $u_{|t=0}=u_{0}$.
\label{mainweak}
\end{theorem}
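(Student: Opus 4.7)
The plan is to construct a solution to (\ref{main}) by a Faedo--Galerkin procedure adapted to the Caputo setting, using the eigenfunctions $\{\vk\}_{k\in\N}$ of $-\lap$ with zero Dirichlet data as a basis, and then to pass to the limit by weak compactness. This is closer to the ``standard parabolic'' strategy announced by the authors than the maximal-regularity approach of theorem~\ref{zacher}, and it is precisely this approximation scheme that will yield the extra $H^{\frac{\alpha}{2}}(0,T;\ld)$ bound appearing in (\ref{a12}).

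First I would look for an approximate solution $\un(x,t)=\sun \cnkt\vkx$ whose coefficients satisfy the linear system of Caputo fractional ODEs obtained by projecting (\ref{main}) onto $\spann\{\vp_{1},\dots,\vp_{n}\}$, with initial data $\cnk(0)=\io u_{0}\vk\,dx$ and with $\fn$ the corresponding projection of $f$. Solvability at the ODE level is classical. Testing the projected equation against $\un$ and using the coercivity inequality $2\langle \da \un,\un\rangle \geq \da \nolk{\un}$ (integrated in time it controls $\|\un\|_{H^{\frac{\alpha}{2}}(0,t;\ld)}^{2}$ up to $\|u_{0}\|_{\ld}^{2}$) together with the ellipticity (\ref{elipt}) yields the uniform $L^{2}(0,T;\hjzo)\cap L^{\infty}(0,T;\ld)$ bound on $\un$; the lower order terms $b_{j}\partial_{j}\un$ and $c\un$ are absorbed on the right-hand side via H\"older's inequality and the Sobolev embedding $\hjzo\hookrightarrow L^{p_{i}}(\Omega)$, which is available exactly because $p_{i}<\frac{2N}{N-2}$, followed by a fractional Gronwall inequality. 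Finally, rewriting the projected equation as $\ddt \ija(\un-u_{0}^{n})=L\un+\fn$ immediately gives the bound on $\ija(\un-u_{0}^{n})$ in $H^{1}(0,T;\hmjo)$, and hence the combined estimate (\ref{a12}) at the approximate level.

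I would then extract a subsequence $\unk\cw u$ in $L^{2}(0,T;\hjzo)\cap H^{\frac{\alpha}{2}}(0,T;\ld)$ with $\ija(\unk-u_{0}^{n_{k}})\cw \ija(u-u_{0})$ in $H^{1}(0,T;\hmjo)$, and pass to the limit in the weak formulation (\ref{a10}). Passage is immediate for the elliptic bilinear form with measurable $\aij$; the terms involving $b_{j}$ and $c$ pass by an Aubin--Lions type compactness which is available thanks to the uniform $H^{\frac{\alpha}{2}}(0,T;\ld)$ bound, giving strong convergence of $\unk$ in $\ldT$. Uniqueness follows from linearity: setting $u_{0}=0$ and $f=0$, testing the weak equation against $u$ itself and applying the same coercivity of $\da$ combined with Gronwall forces $u\equiv 0$.

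For the final claim, when $\alpha>\jd$ the bound $\ija(u-u_{0})\in H^{1}(0,T;\hmjo)\hookrightarrow C([0,T];\hmjo)$, combined with the extra regularity $u\in H^{\frac{\alpha}{2}}(0,T;\ld)$, allows one to apply proposition~\ref{initialsecond} of the appendix and conclude $u\in C([0,T];\hmjo)$ with $u_{|t=0}=u_{0}$. I expect the main obstacle to be precisely this last step, together with the uniform $H^{\frac{\alpha}{2}}$-bound on $\un$: the coercivity of $\da$ has to be exploited sharply enough to produce the $H^{\frac{\alpha}{2}}$ norm (and not merely the weighted norm $\int_{0}^{t}(t-s)^{-\alpha}\nolk{\un}ds$), while for the initial trace the sharp threshold $\alpha>\jd$ reflects the fact that, below it, the convolution kernel in $\ija$ is too singular for elementary arguments to recover $u_{|t=0}$ from $\ija(u-u_{0})_{|t=0}=0$ in a trace-friendly topology.
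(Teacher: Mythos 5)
Your plan is, in outline, exactly the paper's proof: a Galerkin scheme in the eigenbasis of $-\lap$, the coercivity identity for the Caputo derivative (lemma~\ref{Ta}) whose double-integral term produces the $\dot{H}^{\frac{\alpha}{2}}(0,T;\ld)$ seminorm, absorption of the lower-order terms by H\"older, the Sobolev embedding $\hjzo\hookrightarrow L^{p_i}(\Omega)$ and the fractional Gronwall inequality, a duality bound for $\ddt\ija[\un-u_0^n]$ in $L^2(0,T;\hmjo)$, weak compactness, and proposition~\ref{initialsecond} for the trace when $\alpha>\jd$. The uniqueness argument is also the paper's (test with the solution itself via its finite-dimensional projections and use the lower bound of corollary~\ref{lowestiYa}).

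There is, however, one concrete gap: the assertion that ``solvability at the ODE level is classical.'' With $a_{i,j}$ merely measurable and bounded in $t$, the projected system $\da \cn = -A^n(t)\cn+\dots$ has a right-hand side that is only $L^\infty$ in time, and the integral reformulation $\cn = c_{n,0}+\ia(\cdots)$ then only yields $\cn\in C^{0,\alpha}$; in general $\ia g$ for $g\in L^\infty$ need not be absolutely continuous, let alone satisfy $t^{1-\alpha}\cn'\in C[0,T]$. But your entire energy argument hinges on applying the identity $2\io\da\un\,\un\,dx=\da\nolk{\un}+\dots$ pointwise in $t$, and that identity (lemma~\ref{Ta}) requires precisely $\un(x,\cdot)\in AC[0,T]$ and $t^{1-\alpha}\un_t\in L^\infty(\OT)$. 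This is why the paper mollifies $a_{i,j}$, $b_j$, $c$ and $f$ in time before projecting, so that the coefficient matrices lie in $Y_\alpha(T)$ and a Banach fixed-point argument in the weighted space $X(T)$ of (\ref{e1}) delivers $\cn$ with $t^{1-\alpha}\cn'$ continuous (lemma~\ref{approx}, corollary~\ref{coroone}); the price is the extra bookkeeping $\delta_n\to0$ and the convergences $a^{n}_{i,j}\to a_{i,j}$, $b^n_j\to b_j$, $c^n\to c$ in the limit passage. Without this (or some substitute regularization) your coercivity step is not justified. A secondary, smaller point: the Aubin--Lions compactness you invoke for the lower-order terms is not actually needed (weak convergence of $\nabla\unk$ in $L^2(\OT)$ paired with the fixed functions $b_jw$, $cw$ suffices, and the paper instead uses the strong $L^2$ convergence of the mollified coefficients together with the interpolation bound (\ref{a166}) when $p_2>4$), and for the trace statement the $H^{\frac{\alpha}{2}}$ regularity plays no role --- proposition~\ref{initialsecond} needs only $\ddt\ija[u-u_0]\in L^2(0,T;\hmjo)$ and $\alpha>\jd$.
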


Here and henceforth we set $\frac{2p}{p-2} = \infty$ if $p=2$.

In the case of $L=\Delta$ we  are able to define $u_{|t=0}$ for $\alpha \leq
\frac{1}{2}$.  To formulate the result we need the following notation.

\eqq{\hkk= \left\{w \in H^{k}(\Omega): \hd \Delta^{a}w_{|\partial \Omega}=0
\m{ for } a=0,1,\dots, \left[ \frac{k-1}{2} \right]  \right\},}{e2}

\no and $\hkks$ denotes the dual space to $\hkk$.

\begin{theorem}
Assume that  $u_{0}\in \ld$, $f\in L^{2}(0,T;\hmjo)$ and $u$ is a solution of
(\ref{main}) for $L=\Delta$ given by theorem~\ref{mainweak}. Then
\begin{itemize}
\item
if $\alpha >\frac{1}{2}$, then $\ija [u-u_{0}] \in {}_{0}H^{1}(0,T;\hmjo)$
and $u\in C([0,T];\hmjo)$, \m{$u_{|t=0}=u_{0}$,}
\item
if $\alpha=\frac{1}{2}$ and in addition $\partial^{\frac{1}{2}} f
\in L^{p}(0,T;(\hk^{3})^{\ast})$ for some $p\in (1,2)$, then $u-u_{0}
=I^{1-2\alpha} [u-u_{0}] \in {}_{0}W^{1,p}(0,T;(\hk^{3})^{\ast})$ and
$u\in C([0,T];(\hk^{3})^{\ast})$, \m{$u_{|t=0}=u_{0}$}.
\end{itemize}
If $\alpha\in (0, \frac{1}{2})$ and $k\in \mathbb{N}$ is the smallest number
such that $\frac{1}{2}\leq (k+1)\alpha<1$, then
\begin{itemize}
\item
if $\frac{1}{2} < (k+1)\alpha$ and in addition $\rlm f \in L^{2}(0,T;\hkmjs)$
for $m=1, \dots, k$, then
${{\ddt}} I^{1-(k+1)\alpha} [u-u_{0}] \in L^{2}(0,T;\hkkjs)$ and
$u\in C([0,T];\hkkjs)$, \m{$u_{|t=0}=u_{0}$},

\item
if $\frac{1}{2} = (k+1)\alpha$ and in addition $\rlm f \in L^{2}(0,T;\hkmjs)$
for $m=1, \dots, k$, $\partial^{(k+1)\alpha} f
\in L^{p}(0,T;(\hk^{2k+3})^{\ast})$ for some $p\in (\frac{2}{1+2\alpha},2)$,
then $ I^{1-(k+1)\alpha} [u-u_{0}] \in {{{}_{0}W^{1,p}}} (0,T;\hkkjs)$ and
$u\in C([0,T];\hkkjs)$, \m{$u_{|t=0}=u_{0}$}.

\end{itemize}

\label{malealpha}
\end{theorem}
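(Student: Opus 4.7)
The plan is to bootstrap the time-regularity of $u$ in a decreasing scale of dual spaces $\hkkjs=(\bar H^{2k+1})^{\ast}$ by iteratively acting on the equation with $\partial^{\alpha}$. For $\alpha>\frac12$ the assertion is the last sentence of theorem~\ref{mainweak}, so only the cases $\alpha\leq\frac12$ need further work. The engine of the proof is the identity
\[
\partial^{m\alpha}[u-u_{0}]=\Delta^{m}u+\sum_{j=1}^{m-1}\Delta^{m-j}u_{0}\cdot\frac{t^{-j\alpha}}{\Gamma(1-j\alpha)}+\sum_{j=0}^{m-1}\Delta^{m-1-j}\partial^{j\alpha}f,\qquad m=1,\dots,k+1,
\]
interpreted in duality with $\varphi\in\bar H^{2m+1}$. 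I would prove it by induction on $m$: the base $m=1$ is (\ref{a10}) with $L=\Delta$; the step uses the semigroup law $\partial^{\alpha}\partial^{m\alpha}=\partial^{(m+1)\alpha}$ (legal since $(m+1)\alpha<1$ and since the previous level already provides the needed regularity), commutation of $\partial^{\alpha}$ with $\Delta$ in dual-space sense, and the decomposition $\partial^{\alpha}u=\partial^{\alpha}[u-u_{0}]+u_{0}\cdot t^{-\alpha}/\Gamma(1-\alpha)$ which is responsible for generating the $u_{0}$ source terms upon iteration.

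For $\alpha<\frac12$ with $(k+1)\alpha>\frac12$, taking $m=k+1$ in the identity I would estimate the right-hand side in $L^{2}(0,T;\hkkjs)$: $\Delta^{k+1}u$ is bounded via $k+1$ integrations by parts against $\varphi\in\bar H^{2k+1}$, using the boundary conditions in the definition (\ref{e2}); the $f$-sum is controlled by the hypotheses $\partial^{j\alpha}f\in L^{2}(0,T;\hkmjs)$ (with $m$ replaced by $j$, $j=1,\dots,k$) together with the continuous map $\Delta^{k-j}:\hkmjs\to\hkkjs$; finally each factor $t^{-j\alpha}$ with $1\leq j\leq k$ lies in $L^{2}(0,T)$ because the minimality of $k$ forces $k\alpha<\frac12$. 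This yields $\ddt I^{1-(k+1)\alpha}[u-u_{0}]\in L^{2}(0,T;\hkkjs)$, and proposition~\ref{initialsecond} (the one anticipated in remark~\ref{remone}) then forces $u\in C([0,T];\hkkjs)$ with $u_{|t=0}=u_{0}$.

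The two borderline cases $\alpha=\frac12$ and $\alpha<\frac12$ with $(k+1)\alpha=\frac12$ follow the same scheme but at the level of $L^{p}$ rather than $L^{2}$. For $\alpha=\frac12$ I run one more step ($m=2$), obtaining $u_{t}=\Delta^{2}u+\Delta f+\Delta u_{0}\cdot t^{-1/2}/\Gamma(1/2)+\partial^{1/2}f$; then $t^{-1/2}\in L^{p}(0,T)$ for every $p<2$ and the additional hypothesis on $\partial^{1/2}f$ close the $L^{p}$-estimate in $(\bar H^{3})^{\ast}$, and the Sobolev embedding $W^{1,p}\hookrightarrow C$ finishes the argument. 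The case $\alpha<\frac12$, $(k+1)\alpha=\frac12$ is entirely analogous: the extra hypothesis on $\partial^{(k+1)\alpha}f$ and the constraint $p>\tfrac{2}{1+2\alpha}$ exactly compensate for the failure of the $L^{2}$-embedding at the borderline Sobolev exponent $\frac12$.

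The principal technical difficulty lies in the rigorous justification of the inductive identity at each level. One must verify that $I^{1-m\alpha}[u-u_{0}]$ remains in a class on which $\partial^{\alpha}$ is well-defined, that the semigroup law for Riemann--Liouville operators is valid in the relevant dual topology, and that every integration by parts required to realize $\Delta^{m}u$ and $\Delta^{m-j}u_{0}$ in $\hkkjs$ produces vanishing boundary contributions thanks to the structure of the spaces $\bar H^{k}$. Once the identity is rigorously in place, the passage from the $L^{2}$ (or $L^{p}$) control of $\partial^{(k+1)\alpha}[u-u_{0}]$ to the continuity of $u$ at $t=0$ is a direct application of the appendix propositions.
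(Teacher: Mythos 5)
Your formal computation is the right one and coincides with the paper's: iterate $\partial^{\alpha}$ on the equation, use $\partial^{\alpha}v = D^{\alpha}v + v(0)t^{-\alpha}/\Gamma(1-\alpha)$ to generate the $u_{0}$ source terms, move every power of $\Delta$ onto a test function in $\hk^{2m+1}$, observe that $t^{-j\alpha}\in L^{2}(0,T)$ for $j\leq k$ by minimality of $k$, and treat the borderline cases $(k+1)\alpha=\frac{1}{2}$ in $L^{p}$ with $p<2$ before invoking proposition~\ref{initialsecond}. But the step you yourself flag as ``the principal technical difficulty'' --- justifying the inductive identity, i.e.\ that $\partial^{\alpha}$ may legitimately be applied to $\partial^{m\alpha}[u-u_{0}]$ and that the semigroup law $\partial^{\alpha}\partial^{m\alpha}=\partial^{(m+1)\alpha}$ holds at each level --- is not an afterthought; it is the entire content of the proof, and your proposal offers no mechanism for it. The weak solution $u$ from theorem~\ref{mainweak} is only known to lie in $L^{2}(0,T;\hjzo)\cap H^{\alpha/2}(0,T;\ld)$ with $\ija[u-u_{0}]\in{}_{0}H^{1}(0,T;\hmjo)$, which is nowhere near enough time regularity to apply proposition~\ref{propfive} (it requires $AC[0,T]$ in $t$), so the induction cannot be run directly on $u$.

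The paper's resolution is structural and is the reason it builds its particular approximating sequence: all of the fractional differentiations are performed on the Galerkin approximations $\un$ of (\ref{a777}), which by corollary~\ref{coroone} satisfy $\un(x,\cdot)\in AC[0,T]$ with $t^{1-\alpha}\un_{t}$ continuous, so that proposition~\ref{propfive} applies without apology. One then derives the uniform bounds (\ref{o6})--(\ref{o9}) for $D^{(m+1)\alpha}\un$ in $(\hk^{2m+1})^{\ast}$ --- the term $\|\da\un(\cdot,t)\|_{\hmjo}$ being controlled by the already established (\ref{a15}) rather than by re-expanding it as $\Delta\un+\fn$ --- and passes to the weak limit. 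A second ingredient you omit entirely is that the data in the approximate problem is the mollified $\fjn$, not $f$, so one needs the estimate $\|\partial^{m\alpha}\fjn\|\leq c\,\|\partial^{m\alpha}f\|$ of proposition~\ref{estiapproxH} to convert the hypotheses on $f$ into uniform bounds along the sequence. Finally, in the borderline case $(k+1)\alpha=\frac{1}{2}$ with $\alpha<\frac{1}{2}$ your phrase ``entirely analogous'' hides the fact that one must apply one \emph{additional} derivative $\partial^{(k+1)\alpha}$ (landing in $(\hk^{2k+3})^{\ast}$) precisely so that the resulting exponent $(k+2)\alpha=\frac{1}{2}+\alpha$ exceeds $\frac{1}{p}$ and proposition~\ref{initialsecond} becomes applicable; this is where the condition $p>\frac{2}{1+2\alpha}$ comes from.
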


The above assumption concerning $f$ seems to be essential in any problems
with the Caputo fractional derivative. To illustrate this,
we focus on the case
of $\alpha \in (\frac{1}{4}, \frac{1}{2})$ ($k=1$ in theorem~\ref{malealpha}).
We shall consider simple equation
\eqq{\da w (t) =f(t) \hd \m{ on } \hd [0,T].  }{e3}

We shall show that the assumption $\partial^{\alpha} f \in L^{2}(0,T)$ is
crucial in the problem (\ref{e3}).  For this purpose, we shall
find $f\in L^{2}(0,T)$ such that $\partial^{\alpha}  f \not \in L^{2}(0,T)$,
for which the problem (\ref{e3}) can not have a continuous solution. We recall
that the Caputo fractional derivative $\da w$ makes sense  only if $w(0)$ is
well defined: the alternative definition $\da w (t) = I^{1-\alpha}w'(t)$
requires $w'\in L^{1}(0,T)$, and $w$ should be absolutely continuous
on $[0,T]$.

Suppose the contrary, i.e., there exists a continuous function $w$
such that \linebreak
\m{$\ddt \ija [w-w(0)](t)=f(t)$} holds.
Then applying $I^{1+\alpha}$ to both sides of the equality,
we obtain  $I[w-w(0)](t)=I^{1+\alpha}f(t)$. For $\beta \in (-\frac{1}{2},
- \alpha)$ we set $f(t)=t^{\beta}$. Then $f\in L^{2}(0,T)$, but
$\ra f \not \in L^{2}(0,T)$.
Thus $I[w-w(0)]=c_{\alpha, \beta} t^{1+\alpha+\beta}$ and we see that
$w-w(0)=c_{\alpha, \beta} t^{\alpha+\beta}$. The right-hand side is unbounded
if $t\rightarrow 0^{+}$, and so $w$ can not be continuous. Therefore,
the problem (\ref{e3}) with the Caputo derivative has not a continuous
solution with arbitrary $f\in L^{2}(0,T)$.

Now we formulate the result concerning more regular solution.

\begin{theorem}
Assume that $u_{0}\in \hjzo $, $f\in L^{2}(0,T;\ld )$,   (\ref{elipt}) holds,
\linebreak  $\max_{i,j}\| \nabla \aij \|_{L^{\infty}(\OT)}<\infty$ and
for some $p_{1}\in [2,\frac{2N}{N-2} )$, ${{p_{2}\in[2,4]\cap [2,\frac{2N}{N-2} ) } }$ we have $b\in L^{\infty}
(0,T;L^{\frac{2p_{1}}{p_{1}-2}}(\Omega))$,
\hd $c\in L^{\infty}(0,T;L^{\frac{p_{2}}{p_{2}-2}}(\Omega))$.
Then problem (\ref{main}) has exactly one  solution
$u \in L^{2}(0,T;H^{2}(\Omega))\cap H^{\frac{\alpha}{2} }(0,T;\hjzo)$
such that $\ija [u-u_{0}]\in {}_{0}H^{1}(0,T;\ld)$ and (\ref{main}) holds
almost everywhere in
the sense of (\ref{a10}), where the Caputo derivative $\da u $ is interpreted
as weak time derivative of $\ija [u-u_{0}]$ and the following estimates
\eqq{
\| u \|_{L^{2}(0,T;H^{2}(\Omega))} + \| u \|_{H^{\frac{\alpha}{2} }(0,T;\hjzo)}
 \leq C_{0}(\| u_{0}\|_{\hjzo }+ \| f \|_{L^{2}(0,T;\ld)} ),
}{b4}
\eqq{
\| \ija [u-u_{0}] \|_{H^{1}(0,T;\ld)} \leq  C_{0}(\| u_{0}\|_{\hjzo }
+ \| f \|_{L^{2}(0,T;\ld)} ),
}{b5}
hold, where $C_{0}$ depends only on $\alpha$, $\lambda$, $\mu$, $p_{1}$,
$p_{2}$,  $T$,  $\| \nabla \aij \|_{L^{\infty}(\OT)}$, the Poincar\'e constant
and the $C^{2}$-regularity of $\partial \Omega$ and the norms
$ \| b \|_{L^{\infty}(0,T; L^{\frac{2p_{1}}{p_{1}-2}}(\Omega))}$,
$ \| c \|_{L^{\infty}(0,T; L^{\frac{p_{2}}{p_{2}-2}}(\Omega))}$.

\no Furthermore, if $\alpha >\frac{1}{2}$, then $u\in C([0,T];\ld)$
and $u_{|t=0}=u_{0}$.
\label{mainresult}
\end{theorem}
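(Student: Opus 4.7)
The plan is to follow the Galerkin strategy announced in the introduction. Let $\{\vk\}_{k\geq 1}$ be the $L^{2}(\Omega)$-orthonormal eigenbasis of $-\lap$ with homogeneous Dirichlet data and eigenvalues $\lambda_{k}>0$. Set $\un(x,t)=\sun \cnkt \vk(x)$ with $\un(\cdot,0)$ equal to the $L^{2}$-projection of $u_{0}$ onto $\spann\{\vk\}_{k=1}^{n}$ (which additionally converges to $u_{0}$ in $\hjzo$, since $\vk$ is $\hjzo$-orthogonal after rescaling). Impose, for $m=1,\ldots,n$,
\[
\io \da \un\,\vm\,dx + \sij \io \aijxt \partial_{j}\un\,\partial_{i}\vm\,dx = \sj \io b_{j}\partial_{j}\un\,\vm\,dx + \io c\,\un\,\vm\,dx + \langle f(t),\vm\rangle.
\]
This is a linear Volterra-type system of fractional ODEs for $(\cnkt)$, uniquely solvable in $AC[0,T]$ by a contraction argument applied to the equivalent $\ia$-integral formulation.

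The core of the argument is two $n$-uniform a priori estimates. The first, at the $\hjzo$-level, is obtained by testing with $\varphi=\un$: the pointwise Alikhanov-type inequality
\[
\io \un\,\da\un\,dx \geq \jd \da \nolk{\un}
\]
(which, in the eigenbasis, reads $\sun \cnkt\da\cnkt\geq \jd \da \sun |\cnkt|^{2}$ and is valid because each $\cnkt$ is in $AC[0,T]$), combined with ellipticity, H\"older, the Sobolev embeddings $\hjzo\hookrightarrow L^{p_{1}}(\Omega),L^{p_{2}}(\Omega)$, and a fractional Gronwall lemma, yields $\|\un\|_{L^{\infty}(0,T;\ld)\cap L^{2}(0,T;\hjzo)}\leq C$. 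The delicate step, and the principal obstacle of the proof, is the $H^{2}$-level estimate. I test with $\varphi=-\lap\un$, admissible because $-\lap\un=\sun \lambda_{k}\cnkt\vk\in\hjzo$. The time term diagonalizes,
\[
\io \da\un\,(-\lap\un)\,dx=\sun \lambda_{k}\cnkt\da\cnkt\geq \jd \da \sun \lambda_{k}|\cnkt|^{2}=\jd \da \nolk{\nabla\un},
\]
while the principal spatial part, after integrating $\partial_{i}$ by parts (no boundary term since $\lap\un\in \hjzo$), produces a term controlling $\nolk{\lap\un}$ via uniform ellipticity together with a commutator bounded by $\|\nabla\aij\|_{L^{\infty}(\OT)}\|\nabla\un\|_{\ld}\|\lap\un\|_{\ld}$ and absorbed via Young's inequality; the lower-order contributions $b\cdot\nabla\un$ and $c\un$ are estimated in $\ld$ by Sobolev embedding under the hypothesized integrability, with the bound $p_{2}\leq 4$ becoming essential when handling $\io c\,\un\,\lap\un\,dx$. $C^{2}$-elliptic regularity then promotes $\nol{\lap\un}$ to $\|\un\|_{H^{2}(\Omega)}$, and a second fractional Gronwall inequality delivers
\[
\|\un\|_{L^{\infty}(0,T;\hjzo)\cap L^{2}(0,T;H^{2}(\Omega))}\leq C_{0}(\|u_{0}\|_{\hjzo}+\|f\|_{L^{2}(0,T;\ld)}).
\]
The $H^{\alpha/2}(0,T;\hjzo)$ bound follows by interpolating this with the $L^{2}(0,T;\ld)$ bound on $\da\un$ read off from the equation.

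With the uniform bounds in hand I extract a subsequence $\unk \cw u$ in $L^{2}(0,T;H^{2}(\Omega))\cap H^{\alpha/2}(0,T;\hjzo)$; an Aubin-Lions type compactness argument (applied to $H^{2}(\Omega)\hookrightarrow \hjzo\hookrightarrow\ld$ combined with fractional time regularity) gives strong convergence in $L^{2}(0,T;\hjzo)$, sufficient for passing to the limit in the terms involving $b$ and $c$. The Caputo term passes because, by the equation itself, $\ddt\ija[\un-\un(\cdot,0)]=L\un+f^{n}$ is bounded in $L^{2}(0,T;\ld)$ (with $f^{n}$ the Galerkin projection of $f$), hence $\ija[\unk-\unk(\cdot,0)]\cw \ija[u-u_{0}]$ in $H^{1}(0,T;\ld)$. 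This simultaneously establishes (\ref{a10}), the estimates (\ref{b4}) and (\ref{b5}), and the membership $\ija[u-u_{0}]\in{}_{0}H^{1}(0,T;\ld)$. Uniqueness is obtained by applying the first energy inequality to the difference of two solutions with vanishing data. For the continuity statement in the case $\alpha>\jd$, the inclusion $\ija[u-u_{0}]\in H^{1}(0,T;\ld)\hookrightarrow C([0,T];\ld)$ together with proposition~\ref{initialsecond} (quoted in remark~\ref{remone}) allows inversion of the fractional integral, yielding $u\in C([0,T];\ld)$ and $u_{|t=0}=u_{0}$. The hard part, as already flagged, is the higher-order estimate: the classical parabolic trick of testing with $\partial_{t}\un$ is unavailable for a Caputo derivative, so one is forced to test with $-\lap\un$; the diagonalization afforded by the Laplacian eigenbasis is exactly what makes Alikhanov's inequality applicable at the $\nolk{\nabla\un}$-level, while the assumption $\|\nabla\aij\|_{L^{\infty}(\OT)}<\infty$ is precisely what lets one absorb the commutator produced by the time-dependent principal coefficients.
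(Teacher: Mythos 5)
Your overall architecture (Galerkin in the Dirichlet eigenbasis, testing with $-\lap\un$, fractional Gronwall, weak compactness) is the same as the paper's, but the step you yourself identify as the crux --- the $H^{2}$-level coercivity --- is exactly where your argument has a genuine gap. You claim that after integrating $\partial_{i}$ by parts there is ``no boundary term since $\lap\un\in\hjzo$'' and that uniform ellipticity plus a commutator bounded by $\|\nabla \aij\|_{L^{\infty}(\OT)}\|\nabla\un\|_{\ld}\|\lap\un\|_{\ld}$ yields control of $\nolk{\lap\un}$. Neither reading of this is valid. If you expand directly, $\sij\partial_{i}(\aijxt\partial_{j}u)\lap u=\sij \aijxt\partial_{i}\partial_{j}u\,\lap u+(\nabla a\cdot\nabla u)\lap u$, the ellipticity condition (\ref{elipt}) does \emph{not} give a pointwise lower bound $\sij \aijxt\partial_{i}\partial_{j}u\,\lap u\geq\lambda|\lap u|^{2}$: writing $M=D^{2}u$ and $A=(\aij)$, one needs $\mathrm{tr}(AM)\,\mathrm{tr}(M)\geq\lambda(\mathrm{tr}\,M)^{2}$, which fails when $M$ has eigenvalues of mixed signs. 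To use ellipticity one must integrate by parts a \emph{second} time so as to produce $\sij\io \aijxt(\partial_{j}\nabla u)\cdot(\partial_{i}\nabla u)\,dx\geq\lambda\|\nabla^{2}u\|_{\ld}^{2}$, and that second integration by parts generates the boundary integral
\begin{equation*}
\sij\iop \aijxt\,(\partial_{j}u)\,\partial_{i}\Bigl(\pon\Bigr)\,dS(x),
\end{equation*}
which does not vanish (it involves second derivatives of $u$ on $\partial\Omega$). Controlling this term is the entire content of the paper's proposition~\ref{boudary}: one passes to boundary-adapted coordinates, uses $\lap u_{|\partial\Omega}=0$ to kill the normal--normal second derivative, integrates by parts in the tangential directions, and invokes a trace interpolation inequality --- this is precisely where the $C^{2}$-regularity of $\partial\Omega$ enters the constant $C_{0}$ in (\ref{b4}). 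Without this step your estimate does not close.

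Two further points are weaker than they should be but are more repairable. First, you apply the Alikhanov-type inequality to approximate solutions of a Galerkin system built with the raw coefficients, which are only measurable in $t$; the paper mollifies $\aij$, $b_{j}$, $c$ and $f$ in time precisely so that the coefficients $\cnkt$ land in the class $AC[0,T]$ with $t^{1-\alpha}c_{n,k}'\in C[0,T]$, which is what lemma~\ref{Ta} requires. Moreover the paper uses the full \emph{identity} of lemma~\ref{Ta}, whose extra nonnegative double-integral term yields the $H^{\frac{\alpha}{2}}(0,T;\hjzo)$ seminorm directly from (\ref{a88}); your proposed space--time interpolation route to this norm is not obviously wrong but would need a careful justification of the equivalence between $\ija[u-u_{0}]\in H^{1}(0,T;\ld)$ and membership of $u-u_{0}$ in ${}_{0}H^{\alpha}(0,T;\ld)$ (the Gorenflo--Luchko--Yamamoto results quoted in the appendix). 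Second, your uniqueness argument ``apply the first energy inequality to the difference of two solutions'' presumes one may test a merely weak solution with itself and use $\io \da u\cdot u\,dx\geq\frac{1}{2}\da\nolk{u}$; a weak solution lacks the time regularity for the Caputo derivative to exist pointwise, which is why the paper's uniqueness proof projects onto finite-dimensional eigenspaces and invokes corollary~\ref{lowestiYa}. For the present theorem uniqueness already follows from theorem~\ref{mainweak}, so this last issue is minor, but the boundary-term gap in the $H^{2}$ estimate is essential.
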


\section{Notations}
First we introduce the space
\eqq{
Y_{\alpha}(T)=\{h\in C^{1}(0,T]: \hd \tma h'(t)\in C[0,T] \}}{f1}
with the norm $\| h \|_{\ya(T)}= \| h \|_{C[0,T]}+ \| t^{1-\alpha}
h' \|_{C[0,T]}$. Then $Y_{\alpha}(T)$ is a Banach space.  If $H=(h_{1},
\dots , h_{k})$, then we shall write $ H\in \ya(T)$, if $|H|\in \ya(T)$,
where $| \cdot|$ means the maximum norm on $\rr^{k}$.

By assumption (\ref{elipt}) we have $a_{i,j}\in L^{\infty}(\OT)$
(proposition~\ref{elipc}). We denote by $\ete=\ete(t)$ the standard smoothing
kernel, i.e. $\ete \in C^{\infty}_{0}(-\ep/T, \ep/T )$, $\ete$ is nonnegative, $\int_{\rr} \ete (t)dt =1$ \kr{and in addition $\eta_{\ep}(t)=\eta_{\ep}(-t)$}. Then we set
\eqq{\anijxt = \etjn(\cdot) * \aij(x, \cdot )(t),}{b2}
where we extend $\aijxt$ by even reflection for  $t\not \in (0,T)$.  Then
\eqq{\anij \longrightarrow \aij \hd \m{ in } \ldT,}{b3}
and by definition (\ref{b2})  and (\ref{elipt})  we obtain
\eqq{\la |\xi|^{2} \leq \sij \anijxt \xi_{i}\xi_{j} \leq \mu |\xi|^{2}
\hd \hd  \forall t\in [0,T], \hd \forall \xi \in \rr^{n}. }{eliptn}
\no
As a result we have
\eqq{
\anij(g)(t)\equiv \int_{\Omega}\anijxt g(x)dx
\in Y_{\alpha}(T)\quad
\m{ for } n\in \N, \hd i,j\in \{ 1, \dots, N\}, \hd g\in L^{1}(\Omega).}
{d1}

{{\no If we extend function $b_{j}$, $c$ by zero for $t\not \in (0,T)$, then}} the functions $b^{n}_{j}(x,t)$ and $c^{n}(x,t)$ are defined analogously, i.e.
\eqq{b^{n}_{j}(x,t)=\etjn(\cdot ) * b_{j}(x,\cdot)(t), \hd c^{n}(x,t)
=\etjn(\cdot ) * c(x,\cdot)(t), }{c9}
and we have
\eqq{b^{n}_{j}(g)(t)\equiv \int_{\Omega}b^{n}_{j}(x,t) g(x)dx
\in Y_{\alpha}(T), \hd c^{n}(g)(t)\equiv \int_{\Omega}c^{n}(x,t) g(x)dx
\in Y_{\alpha}(T).}{c10}

\section{Approximate solutions}
In this section we shall define a special approximate solution for which
we will be able to obtain appropriate uniform estimates.  We shall assume
that
\eqq{f\in L^{2}(0,T;\hmjo), \hd u_{0}\in \ld, }{c7}
\eqq{b_{j}\in L^{1}(\OT), \hd c\in L^{1}(\OT),}{c8}
and $a_{i,j}$  are measurable and satisfy (\ref{elipt}).

Let $\{ \vn(x)\}_{n\in \mathbb{N}}$ be an  orthonormal basis of $L^{2}
(\Omega)$ such that $- \lap \vn = \la_{n} \vn$  in $\Omega$ and ${\vn}
_{|\partial \Omega}=0$. We will find  approximate solution in the form
\eqq{
\un (x,t)= \sun \cnkt \vkx.
}{m1}
Therefore we have to determine the coefficients $\cnk$. For this purpose we { extend  function $f$ by odd reflection to the interval $(-T,T)$ and we set zero elsewhere. } Then we denote  $f_{\ep}=\eta_{\ep} * f $, where $\eta_{\ep}=\eta_{\ep}(t)$
is a standard smoothing kernel as earlier and we set
\[
\fnxt= \sun \langle f_{\frac{1}{n}}(y,t)\vk(y) \rangle_{H^{-1}\times \hjzo}
\vkx.
\]
We denote
\eqq{L^{n}u(x,t)= \sij  \partial_{i}(\anijxt \partial_{j}u(x,t))
+\sj b^{n}_{j}(x,t)\partial_{j}u(x,t)+c^{n}(x,t)u(x,t),  }{elipn}
where $a_{i,j}^{n}$ are defined in (\ref{b2}) and $b_{j}^{n}$, $c^{n}$
in (\ref{c9}).

In order to determine the coefficients $\cnk$,
we shall consider the following system
\eqq{\left\{
\begin{array}{lll}
\da \un= L^{n}\un + \fn & \m{ in } & \O\\
u_{|t=0}=u_{0}^{n} & \m{ in } & \Omega, \\
\end{array}
\right.
}{appromain}
where $u^{n}_{0}(x)= \sun \io u_{0}(y)\vk(y)dy \vkx$. We define the
coefficients $\cnk$ by a projection of the problem (\ref{appromain})
onto a finite dimensional space span by $\{\vp_{1}, \dots , \vp_{n} \}$.
More precisely,   we multiply (\ref{appromain})${}_{1}$ by $\vm$ and integrate
over $\Omega$.  Then after integrating by parts we have
\[
\da \cnmt =- \sun \sij  \cnkt \io \anijxt \partial_{j}\vkx \partial_{i}\vmx dx
\]
\[
+\sun \sj \cnkt \io \bjnxt \partial_{j}\vkx \vmx dx +\sun \cnkt \io c^{n}(x,t)
\vkx \vmx dx
\]
\eqq{
+ \langle \fjnxt \vmx \rangle_{H^{-1}\times \hjzo},
}{integr1}
where $m=1, \dots, n.$. By  (\ref{eliptn}), (\ref{c7}), (\ref{c8}) and
proposition~\ref{elipc} we deduce that the integrals on the right-hand side
are finite.   We introduce the following notations:
\[
\cnt = (c_{n,1}(t), \dots, c_{n,n}(t)),
\]
\[
A^{n}_{m,k}(t)= \sij  \io \anijxt \partial_{j}\vkx \partial_{i}\vmx dx,
\hd A^{n}(t)=\{A^{n}_{m,k}(t)  \}_{k,m=1}^{n},
\]

\[
B^{n}_{m,k}(t)= \sj  \io b^{n}_{j}(x,t) \partial_{j}\vkx \vmx dx,
\hd B^{n}(t)=\{B^{n}_{m,k}(t)  \}_{k,m=1}^{n},
\]

\[
C^{n}_{m,k}(t)=   \io c^{n}(x,t)\vkx \vmx dx, \hd C^{n}(t)=\{C^{n}_{m,k}(t)
\}_{k,m=1}^{n},
\]

\[
F_{n}(t)= \left( \io f_{\frac{1}{n}}(y,t)\varphi_{1}(y) dy, \dots ,
\io f_{\frac{1}{n}}(y,t)\varphi_{n}(y) dy    \right),
\]
\[
c_{n,0}= \left(  \io u_{0}(y) \varphi_{1}(y)dy, \dots , \io u_{0}(y)
\varphi_{n}(y)dy \right).
\]
Then system (\ref{integr1}) can be written in the following form
\eqq{\left\{
\begin{array}{rll}
\da \cnt &=& - A^{n}(t)\cnt +B^{n}(t)\cnt +C^{n}(t)\cnt + F_{n}(t),\\
\cn(0)&=&c_{n,0}. \\
\end{array}
\right.
}{a1}
We shall show that the above system has an absolutely continuous solution
and then under the assumption $c_{n}\in AC[0,T]$.
By proposition~\ref{propone} the problem (\ref{a1}) is equivalent to the
following integral equation
\eqq{\cnt = c_{n,0}-\ia (A^{n}\cn)(t)+ \ia (B^{n}\cn)(t)+\ia (C^{n}\cn)(t)
+\ia F_{n}(t),}{b1}
where by assumption (\ref{c7}), the function $F_{n}$ is smooth.
By (\ref{d1}) and (\ref{c10}) we have  $A^{n}, B^{n}, C^{n} \in \ya (T)$.
Hence for $\afn\equiv A^{n}-B^{n}-C^{n}$ we also have
\eqq{
\afn \in \ya(T).
}{c11}

\no
Furthermore we define the space
\eqq{X(T)=\{c\in C^{1}((0,T];\rr^n): c(0)=c_{0,n}, \hd \tma c'(t)\in C([0,T];
\rr^{n}) \}.}{e1}
Then, for $c_{1},c_{2}\in X(T)$, defining the distance
$\varrho (c_{1},c_{2})=\|c_{1}-c_{2} \|_{\ya(T)}$, this is a distance
yielding a complete metric on $X(T)$.
We note that $X(T) \subset AC([0,T];\rr^{n})$.

\begin{lem}
For any $n\in \N$ and $T>0$ the system (\ref{b1}) has a  unique solution
in $X(T)$.
\label{approx}
\end{lem}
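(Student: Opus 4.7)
The plan is to apply a Banach-type fixed point argument to the operator
\[
\Phi(c)(t) := c_{n,0} - \ia(\afn c)(t) + \ia F_{n}(t),
\]
whose fixed points in $X(T)$ are precisely the solutions of (\ref{b1}). Since $\fjn$ is smooth by (\ref{c7}) (so $F_{n}$ is smooth too) and $\afn \in \ya(T)$ by (\ref{c11}), the first task is to check that $\Phi$ maps $X(T)$ into itself. The identity $\Phi(c)(0) = c_{n,0}$ is immediate from $\ia g(0)=0$ for bounded $g$. For the regularity required by (\ref{e1}), I would note that $\ya(T)$ is an algebra under pointwise multiplication: for $h_{1},h_{2}\in\ya(T)$ the Leibniz rule gives
\[
\tma(h_{1}h_{2})'(t) = (\tma h_{1}'(t))\,h_{2}(t) + h_{1}(t)\,(\tma h_{2}'(t)) \in C[0,T],
\]
so $\afn c \in \ya(T)$ whenever $c\in X(T)$. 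To see that $\ia$ preserves $\ya(T)$, I would invoke the formula $\ddt \ia g(t) = \jga g(0) t^{\alpha-1} + \ia g'(t)$ (which follows from proposition~\ref{propone} and holds for $g\in \ya(T)\subset AC[0,T]$); since $|g'(\tau)|\lesssim \tau^{\alpha-1}$, the Beta-function identity $\ia(\tau^{\alpha-1})(t)=\frac{\Gamma(\alpha)}{\Gamma(2\alpha)}t^{2\alpha-1}$ shows $|\ia g'(t)|\lesssim t^{2\alpha-1}$, so $\tma\ddt \ia g(t)$ extends continuously to $\jga g(0)$ at $t=0$.

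For the contraction step, the crucial observation is that for $c_{1},c_{2}\in X(T)$ the difference satisfies $(c_{1}-c_{2})(0)=0$, so the singular boundary term in $\ddt \ia[\afn(c_{1}-c_{2})]$ is absent and only the regular piece $\ia[(\afn(c_{1}-c_{2}))']$ remains. Since $\Phi$ is affine with linear part $-\ia\afn$, iterating the standard Volterra-type convolution estimate for the kernel $(t-\tau)^{\alpha-1}$ yields
\[
\|\Phi^{k}c_{1}-\Phi^{k}c_{2}\|_{\ya(T)} \leq \frac{(CT^{\alpha})^{k}}{\Gamma(1+k\alpha)}\|c_{1}-c_{2}\|_{\ya(T)},
\]
where $C$ depends on $\|\afn\|_{\ya(T)}$ only. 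Since $\sum_{k\geq 0}(CT^{\alpha})^{k}/\Gamma(1+k\alpha)$ converges (Mittag-Leffler), Weissinger's fixed point theorem produces a unique fixed point of $\Phi$ in $X(T)$, which is the desired solution of (\ref{b1}) on the whole interval $[0,T]$ (so no separate extension argument is needed).

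The main obstacle is the first paragraph: one must show that the multiplication $c\mapsto \afn c$ and the fractional integral $g\mapsto \ia g$ both preserve $\ya(T)$ with constants that can be iterated. This is delicate because the behavior $t^{\alpha-1}$ allowed for derivatives in the definition (\ref{f1}) of $\ya(T)$ is \emph{exactly} the singularity produced by $\ddt \ia g$ when $g(0)\neq 0$, so the argument depends on the explicit splitting $\ddt \ia g = \jga g(0)t^{\alpha-1} + \ia g'$ and on the fact that in the contraction step the offending boundary term disappears because $c_{1}(0)=c_{2}(0)=c_{n,0}$.
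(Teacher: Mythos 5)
Your operator $\Phi$ is the same operator $P$ the paper uses, and your verification that $\Phi$ maps $X(T)$ into itself follows the paper's lines: the algebra property of $\ya(T)$, the splitting $\ddt \ia g(t)=\jga g(0)t^{\alpha-1}+\ia g'(t)$ (note this is proposition~\ref{propthree}, not proposition~\ref{propone} as you cite), and the observation that the singular boundary term disappears from differences because $c_{1}(0)=c_{2}(0)=c_{n,0}$. Where you genuinely diverge is in passing from a local to a global fixed point. The paper proves a contraction only on a short interval $[0,T_{1}]$ subject to $C(\alpha)T_{1}^{\alpha}\|\afn\|_{\ya(T_{1})}<1$ and then continues the solution step by step, switching to the metric $\|c_{1}'-c_{2}'\|_{C[T_{k},T_{k+1}]}$ on sets of functions that already agree on $[0,T_{k}]$. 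You instead iterate $\Phi$ on all of $[0,T]$ and invoke Weissinger's theorem, which, if the iterated estimate holds, eliminates the continuation argument entirely and is the cleaner route.

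The step you should not wave at is precisely that iterated estimate $\|\Phi^{k}c_{1}-\Phi^{k}c_{2}\|_{\ya(T)}\leq (CT^{\alpha})^{k}\,\Gamma(1+k\alpha)^{-1}\|c_{1}-c_{2}\|_{\ya(T)}$. It does \emph{not} follow by composing the single-step bound $\|\ia(\afn h)\|_{\ya(T)}\leq CT^{\alpha}\|h\|_{\ya(T)}$ with itself: that only yields $(CT^{\alpha})^{k}$, with no Gamma function in the denominator, which is useless for large $T$. The factorial gain is standard for the $C[0,T]$ component of the norm, but the $\ya$-norm also carries the weighted derivative $\|\tma h'\|_{C[0,T]}$, and for that component you must run an explicit induction: setting $h_{0}=c_{1}-c_{2}$ and $h_{k+1}=\ia(\afn h_{k})$, propagate pointwise bounds of the form $|h_{k}(t)|\leq A_{k}t^{(k+1)\alpha}$ and $|h_{k}'(t)|\leq B_{k}t^{(k+1)\alpha-1}$ (the term $\jga(\afn h_{k})(0)t^{\alpha-1}$ vanishes at every stage since $h_{k}(0)=0$), using $|(\afn h_{k})'|\leq |(\afn)'||h_{k}|+|\afn||h_{k}'|$ and the Beta identity $\ia(\tau^{\beta-1})(t)=\frac{\Gamma(\beta)}{\Gamma(\alpha+\beta)}t^{\alpha+\beta-1}$. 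This produces a coupled linear recursion for $(A_{k},B_{k})$ whose coefficients pick up a factor of order $\Gamma((k+1)\alpha)/\Gamma((k+2)\alpha)$ at each step, and hence do decay like Mittag--Leffler coefficients. With that induction written out your argument closes and gives existence and uniqueness on all of $[0,T]$ in one shot; without it, the displayed $k$-fold estimate is an assertion rather than a proof.
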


\begin{proof}
We shall use the Banach fixed point theorem in order to prove the solvability
of (\ref{b1}) in the space (\ref{e1}).
At the first step we shall obtain the solution on some interval $[0,T_{1}]$ and further we shall extend the solution. Hence at the beginning we define   the operator $P$ on $\xtj$ by formula
\eqq{
Pc(t)= c_{n,0}-\ia (\afn c)(t) +\ia F_{n}(t).
}{k1}

Under some smallness assumption on $T_{1}$, we shall obtain the fixed point
of $P$.  Hence we first have to show that $Pc\in \xtj$, provided $c\in \xtj$.
Clearly we have $Pc(0)=c_{n,0}$ and $\afn c$ is continuous and
by proposition~\ref{proptwo} we obtain  the continuity of $Pc$ on $[0,T]$.
From (\ref{c11}) we have $\afn c\in \ya(T_{1})$ and by
propositions~\ref{propthree} and \ref{propfour},
we obtain  $t^{1-\alpha}(Pc)'\in C^{0,\alpha}[0,T_{1}]$, that is,
$Pc\in X(T_{1})$ for arbitrary $T_{1}$. Now we shall show that $P$ is
a contraction on $X(T_{1})$, provided $T_{1}$ is small enough. Indeed, we first  we note that the operator $\ia $ is bounded on $\ya(T_{1})$, and
more precisely from proposition~\ref{propfour} we have
\eqq{
\| \ia h \|_{\ya (T_{1})} \leq C(\alpha)T^{\alpha}_{1}
\| h \|_{\ya (T_{1})}, \hd  \hd h\in \ya (T_{1}). }{g1}

Secondly, we see that
\eqq{\| h_{1} h_{2} \|_{\ya(T_{1})} \leq \| h_{1} \|_{\ya (T_{1})}
\cdot \| h_{2} \|_{\ya (T_{1})}, \hd \hd h_{1}, h_{2}\in \ya (T_{1}). }{h1}
Therefore, if $c_{1},c_{2}\in X(T_{1})$, then form (\ref{g1}) and (\ref{h1})
we have
\[
\| Pc_{1} - Pc_{2} \|_{\ya(T_{1})}= \| \ia (\afn (c_{1} - c_{2}))
\|_{\ya(T_{1})}
\]
\[
\leq C(\alpha)T_{1}^{\alpha} \| \afn (c_{1} - c_{2}) \|_{\ya(T_{1})}
\leq C(\alpha)T_{1}^{\alpha} \| \afn  \|_{\ya(T_{1})} \cdot \| c_{1} - c_{2}
\|_{\ya(T_{1})}.
\]
Hence $P$ is a contraction on $X(T_{1})$, provided
\eqq{
C(\alpha)T_{1}^{\alpha} \| \afn  \|_{\ya(T_{1})} <1,}{i1}
and finally, we obtained a solution of (\ref{b1}) in $X(T_{1})$.

In order to extend the solution,
assume that we have already defined a solution
$\hat{c}$ of (\ref{b1}) on $[0,T_{k}]$, where  $T_{k}>0$.
We shall define  the solution for $t  \in [ T_{k}, T_{k+1}]$ with
$T_{k+1}>T_{k}$. Therefore  we define the set
\[
X_{k}(T_{k+1})=\{c \in C^{1}((0,T_{k+1}];\rr^{n}):\hd c(t)=\hat{c}(t)
\hd \m{ for } t\in [0,T_{k}] \}.
\]
Then $X_{k}(T_{k+1})$ becomes a complete metric space with the metric
$\varrho(c_{1},c_{2})\equiv \| c_{1}-c_{2}\|_{X_{k}(T_{k+1})}
= \| c_{1}'- c_{2}'\|_{C[T_{k},T_{k+1}]}$. Then we define an operator $P$
on $X_{k}(T_{k+1})$ again by formula (\ref{k1}). If $c\in X_{k}(T_{k+1})$,
then by definition of $\hat{c}$,
we have $Pc(t)=\hat{c}(t)$ for $t\in [0,T_{k}]$ and by the same reasoning as
the previous for $X(T_{1})$, we deduce that $Pc\in X_{k}(T_{k+1})$.

Now we shall show that $P$ is a contraction on $X_{k}(T_{k+1})$,
provided $T_{k+1}-T_{k}$ is small enough. Indeed, if $c_{1}, c_{2}
\in X_{k}(T_{k+1})$, then
\[
\| Pc_{1} -Pc_{2} \|_{X_{k}(T_{k+1})}= \| [I^{\alpha}_{T_{k}}
(\afn (c_{1}-c_{2}))]'\|_{C[T_{k},T_{k+1}]},
\]
where $I^{\alpha}_{T_{k}}$ denotes the fractional integration operator
with beginning point $T_{k}$. Using the analog of proposition~\ref{propthree} for $I^{\alpha}_{T_{k}}$ and the equality $c_{1}(T_{k})=c_{2}(T_{k})$,
we obtain
\[
\| Pc_{1} -Pc_{2} \|_{X_{k}(T_{k+1})}=\| I^{\alpha}_{T_{k}}
[(\afn (c_{1}-c_{2}))']\|_{C[T_{k},T_{k+1}]}
\]
\[
\leq C(\alpha) \| \afn \|_{C[T_{k},T_{k+1}]}[T_{k+1}-T_{k}]^{\alpha}
\| c_{1} - c_{2} \|_{X_{k}(T_{k+1})}
\]
\[
+ C(\alpha) \| (\afn)'\|_{C[T_{k},T_{k+1}]}[T_{k+1}-T_{k}]^{\alpha}
\| c_{1} - c_{2} \|_{C[T_{k},T_{k+1}]}.
\]
Using the inequality $\| c_{1} - c_{2} \|_{C[T_{k},T_{k+1}]}
\leq |T_{k+1}-T_{k}| \| c_{1} - c_{2} \|_{X_{k}(T_{k+1})}$,
we deduce that $P$ is a contraction on $X_{k}(T_{k+1})$, provided
\eqq{
C(\alpha)\left[ \| \afn \|_{C[T_{k},T_{k+1}]} + [T_{k+1}-T_{k}] \| (\afn )'
\|_{C[T_{k},T_{k+1}]} \right][T_{k+1}-T_{k}]^{\alpha}<1.
}{l1}
\no
By (\ref{c11}) the quantities $\| \afn  \|_{C[T_{k},T_{n+1}]}$, $\| (\afn )'
\|_{C[T_{k},T_{k+1}]}$ are bounded by $T_{1}^{\alpha-1}\| \afn \|_{\ya(T)}$
and  by iteration we obtain the solution of (\ref{b1}) which belongs to
the space $X(T)$.
The  uniqueness follows from the  uniqueness of the fixed point given by
the Banach theorem.
\end{proof}

\begin{coro}
If $n\in \mathbb{N}$ and   $T>0$, then $\un$ given by (\ref{m1})
and (\ref{b1}) satisfies
\[
\io \da \un (x,t)\vm (x)dx +  \sij   \io \anijxt \partial_{j}\un(x,t)
\partial_{i}\vmx dx
\]
\[
= \sj  \io \bjnxt \partial_{j}\un(x,t) \vmx dx +\io c^{n}(x,t) \un(x,t) \vmx dx
\]
\eqq{
+ \langle \fjnxt, \vmx \rangle_{H^{-1}\times \hjzo},
}{a7}
for $m=1, \dots, n$.  Furthermore, if $x\in \Omega$ and
$\beta\in \mathbb{N}^{N}$, then $\partial^{\beta}_{x} \un (x,\cdot )
\in AC[0,T]$ and $t^{1-\alpha}\partial^{\beta}_{x} \un_{t} \in
C(\overline{\Omega} \times [0,T] )$, provided $\partial \Omega$ is
sufficiently smooth (e.g. $\partial \Omega \in C^{|\beta|+1}$).
\label{coroone}
\end{coro}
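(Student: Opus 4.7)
The plan is to recognise that equation~(\ref{a7}) is nothing but the ODE system~(\ref{a1}) re-expressed in the spatial basis $\{\vk\}$, so it is automatically satisfied by the coefficients $\cnk$ produced by lemma~\ref{approx}. The heavy lifting has already been done by the Banach fixed point argument, and what remains is essentially bookkeeping, together with a separate elliptic-regularity step for the spatial differentiability of $\un$.

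First I would take $\cn=(\cnk)_{k=1}^{n}$ to be the solution of~(\ref{b1}) in $X(T)$ given by lemma~\ref{approx}, and define $\un$ by~(\ref{m1}). Since $X(T)\subset AC([0,T];\rr^{n})$, for each fixed $x\in\Omega$ the function $\un(x,\cdot)=\sun \cnkt\vkx$ is a finite linear combination of absolutely continuous functions of $t$, hence itself absolutely continuous. By proposition~\ref{propthree} in the appendix, $\da\un(x,\cdot)$ then exists in $L^{1}(0,T)$ and, by linearity, commutes with the finite $k$-sum, giving
\[
\da\un(x,t)=\sun \da\cnkt \,\vkx.
\]
Multiplying by $\vmx$ and integrating over $\Omega$, the orthonormality of $\{\vk\}$ in $\ld$ collapses the first term of~(\ref{a7}) to $\da\cnmt$. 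The remaining terms on the right-hand side are recovered by reversing the integration by parts that produced~(\ref{integr1}): the elliptic contribution yields $-\sun A^{n}_{m,k}(t)\cnkt$ (using $\vmx_{|\partial\Omega}=0$), and the $b^{n}$, $c^{n}$ and $f_{1/n}$ contributions yield $\sun B^{n}_{m,k}(t)\cnkt$, $\sun C^{n}_{m,k}(t)\cnkt$ and $(F_{n}(t))_{m}$ respectively. Hence~(\ref{a7}) is precisely the $m$-th component of~(\ref{a1}), which $\cn$ satisfies by lemma~\ref{approx}.

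For the regularity assertion I would fix $\beta\in\mathbb{N}^{N}$ and assume $\partial\Omega\in C^{|\beta|+1}$. Standard elliptic regularity for the Dirichlet eigenvalue problem $-\lap\vk=\la_{k}\vk$ in $\Omega$ with $\vk_{|\partial\Omega}=0$ yields $\vk\in C^{|\beta|+1}(\overline{\Omega})$ for each $k=1,\dots,n$, so $\partial^{\beta}_{x}\vk\in C(\overline{\Omega})$. Consequently
\[
\partial^{\beta}_{x}\un(x,t)=\sun \cnkt \,\partial^{\beta}_{x}\vkx
\]
is a finite linear combination of absolutely continuous functions of $t$ with $C(\overline{\Omega})$ coefficients, giving $\partial^{\beta}_{x}\un(x,\cdot)\in AC[0,T]$ for every $x$, while
\[
t^{1-\alpha}\partial^{\beta}_{x}\un_{t}(x,t)=\sun \bigl[\, t^{1-\alpha}\cnk'(t)\,\bigr]\,\partial^{\beta}_{x}\vkx
\]
belongs to $C(\overline{\Omega}\times[0,T])$ because each factor $t^{1-\alpha}\cnk'$ lies in $C[0,T]$ by the defining property of $X(T)$ in~(\ref{e1}). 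The only point requiring care is the interchange of $\da$ with the finite $k$-sum, which for absolutely continuous summands is immediate from the linearity of $\da$ on $AC[0,T]$; beyond this piece of bookkeeping I do not anticipate any substantive obstacle.
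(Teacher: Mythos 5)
Your proposal is correct and coincides with the paper's (implicit) argument: the corollary is stated without proof precisely because (\ref{a7}) is the $m$-th component of the projected system (\ref{integr1})/(\ref{a1}), solved via (\ref{b1}) in lemma~\ref{approx}, and the regularity claims follow from $c_{n}\in X(T)$ together with elliptic regularity of the eigenfunctions $\vk$. Your bookkeeping — linearity of $\da$ over the finite sum, orthonormality of $\{\vk\}$, and the factorization $t^{1-\alpha}\partial^{\beta}_{x}\un_{t}=\sun\bigl[t^{1-\alpha}\cnk'(t)\bigr]\partial^{\beta}_{x}\vkx$ with $t^{1-\alpha}\cnk'\in C[0,T]$ from (\ref{e1}) — is exactly what is needed.
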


\section{Weak solutions}

We shall apply the standard energy method. Briefly speaking, we multiply
the approximate problem (\ref{a7}) by its solution. In order to deal with
the Caputo derivative we need  the following lemma.
\begin{lem}
Assume that $w\in L^{2}(\OT)$ and
\eqq{ w(x,\cdot )\in AC[0,T] \hd  \m{ for } \hd x\in \Omega, }{A3}
and
\eqq{t^{1-\alpha}w_{t}\in L^{\infty}(\OT).}{A4}
Then the following equality
\[
\da \| w(\cdot , t) \|_{L^{2}(\Omega)}^{2} + \frac{\alpha}{\gja}
\izt \taj \io |w(x,t)-w(x,\tau)|^{2}dx \dt
\]
\eqq{+ \jgja t^{-\alpha} \io |w(x,t)-w(x,0)|^{2}dx = 2 \io \da w(x,t) \cdot
w(x,t) dx }{d}
holds.
\label{Ta}
\end{lem}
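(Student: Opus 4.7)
The plan is to establish \eqref{d} by rewriting the Caputo derivative in an equivalent "weighted-difference" form, obtained by an integration by parts in the defining convolution. Specifically, for any absolutely continuous $g$ on $[0,T]$ whose derivative satisfies $|g'(\tau)|\lesssim \tau^{\alpha-1}$, the boundary term at $\tau=t$ in the representation $\da g(t)=\jgja\izt\ta g'(\tau)\,\dt$ vanishes, and one arrives at
$$\da g(t) = \jgja t^{-\alpha}(g(t)-g(0))+\frac{\alpha}{\gja}\izt\taj (g(t)-g(\tau))\,\dt.$$
Under \eqref{A3}--\eqref{A4} this formula should apply both to $g(t)=\|w(\cdot,t)\|_{\ld}^{\,2}$ and, for each fixed $x\in\Omega$, to $g(t)=w(x,t)$.

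Applying the formula to $g(t)=\|w(\cdot,t)\|_{\ld}^{\,2}$ and then using, under the spatial integral, the elementary identity
$$|w(x,t)|^{2}-|w(x,\tau)|^{2}=2w(x,t)(w(x,t)-w(x,\tau))-|w(x,t)-w(x,\tau)|^{2},$$
the right-hand side splits into two natural groups. In the first, Fubini pulls the factor $w(x,t)$ outside the $\tau$-integration, and the bracket that remains is, by the same representation applied pointwise in $x$, exactly $\da w(x,t)$; so this group collapses to $2\io w(x,t)\da w(x,t)\,dx$. The second group contributes $-\jgja t^{-\alpha}G(0)-\frac{\alpha}{\gja}\izt\taj G(\tau)\,\dt$ with $G(\tau):=\io|w(x,t)-w(x,\tau)|^{2}\,dx$, and transferring these two terms to the left-hand side produces \eqref{d}.

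The main obstacle will be justifying the weighted-difference representation under \eqref{A3}--\eqref{A4}, i.e.\ showing that $\varepsilon^{-\alpha}(g(t-\varepsilon)-g(t))\to 0$ as $\varepsilon\to 0^{+}$ for both relevant $g$'s. Here the hypothesis $t^{1-\alpha}w_{t}\in L^{\infty}(\OT)$ is decisive: together with $w(x,\cdot)\in AC[0,T]$ it yields $|w(x,t)-w(x,\tau)|\le \frac{M}{\alpha}(t^{\alpha}-\tau^{\alpha})$ with $M=\|t^{1-\alpha}w_{t}\|_{L^{\infty}(\OT)}$; for $\tau$ near $t$ this is of order $t^{\alpha-1}(t-\tau)$, so $G(\tau)=O((t-\tau)^{2})$, and a Cauchy--Schwarz argument gives an analogous control on the squared-norm difference. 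The same pointwise bound legitimises the Fubini interchange in the previous step and guarantees that $\izt\taj|w(x,t)-w(x,\tau)|\,\dt$ is finite for a.e.\ $x$; the remainder of the proof is bookkeeping.
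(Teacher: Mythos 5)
Your proof is correct, but it is organized differently from the paper's. You first isolate the Marchaud-type representation
\[
\da g(t)=\jgja t^{-\alpha}\,(g(t)-g(0))+\frac{\alpha}{\gja}\izt\taj\,(g(t)-g(\tau))\,\dt ,
\]
apply it separately to $g(t)=\|w(\cdot,t)\|_{\ld}^{2}$ and to $g=w(x,\cdot)$ for each fixed $x$, and then recombine through the identity $a^{2}-b^{2}=2a(a-b)-(a-b)^{2}$ and Fubini. The paper never states this representation: it writes both Caputo derivatives as $I^{1-\alpha}$ of the time derivative, forms the single difference
\[
2\io\da w(x,t)\,w(x,t)\,dx-\da\|w(\cdot,t)\|_{\ld}^{2}
=\frac{2}{\gja}\izt\ta\io w_{\tau}(x,\tau)\,[w(x,t)-w(x,\tau)]\,dx\,\dt ,
\]
recognizes the integrand as $-\frac{1}{2}\partial_{\tau}\io|w(x,t)-w(x,\tau)|^{2}dx$, and integrates by parts on $[0,t-h]$ before letting $h\to0$. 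The analytic core is the same in both arguments: an integration by parts in the convolution, with (\ref{A4}) used to kill the boundary contribution at $\tau=t$ (your $\varepsilon^{-\alpha}(g(t-\varepsilon)-g(t))$ is exactly the paper's term $\jgja h^{-\alpha}\io|w(x,t)-w(x,t-h)|^{2}dx$). What your route buys is a reusable pointwise formula and a transparent algebraic mechanism producing the two nonnegative defect terms; what the paper's route buys is that only one integration by parts and one limit passage need justification, instead of two instances of the representation plus a Fubini interchange. One small point you should still write out: to apply your representation to $g(t)=\|w(\cdot,t)\|_{\ld}^{2}$ you need $g\in AC[0,T]$ with $|g'(\tau)|\lesssim\tau^{\alpha-1}$, which requires $\sup_{t}\|w(\cdot,t)\|_{\ld}<\infty$; this follows from $w\in L^{2}(\OT)$ together with the uniform bound $|w(x,t)-w(x,\tau)|\le\frac{M}{\alpha}|t^{\alpha}-\tau^{\alpha}|$ that you already derived, so it is indeed only bookkeeping.
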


\begin{proof}
By the definition, we have
\[
2 \io \da w(x,t) \cdot w(x,t) dx - \da \| w(\cdot , t)
\|_{L^{2}(\Omega)}^{2}
\]
\[
= \frac{2}{\gja} \izt \ta \io w_{t}(x, \tau )[w(x,t)- w(x, \tau)] dx \dt
= \frac{2}{\gja}\left( \izth + \iht\right) \equiv I_{1}+I_{2}.
\]
Then
\[
I_{1}= - \jgja \izth \ta \io \left( |w(x,t)-w(x,\tau)|^{2} \right)_{\tau}'
dx \dt
\]
\[
=\frac{\alpha}{\gja} \izth \taj \io |w(x,t)-w(x,\tau)|^{2}dx \dt
+ \jgja t^{-\alpha} \io |w(x,t)-w(x,0)|^{2}dx
\]
\[
- \jgja h^{-\alpha} \io |w(x,t)- w(x,t-h)|^{2}dx.
\]
We denote the last integral by $I_{3}$. Then using assumption (\ref{A4}) we
obtain
\[
I_{3} \leq h^{2-\alpha}|\Omega| \| t^{1-\alpha} w_{t} \|_{L^{\infty}(\OT)}
(t-h)^{2(\alpha-1)}\longrightarrow 0,
\]
if $h \rightarrow 0$. Again using (\ref{A4}) we have the estimate for $I_{2}$
\[
|I_{2}|\leq 2 \| t^{1-\alpha} w_{t} \|_{L^{\infty}(\OT)}
\| w\|_{L^{2}(\OT)}^{2} \iht \ta \tau^{\alpha-1}\dt \longrightarrow 0,
\]
provided $h \rightarrow 0 $. Therefore we obtain (\ref{d})
\end{proof}

\no Now we can prove the first energy estimate for approximate solutions.

\begin{lem}
Assume that $u_{0}\in L^{2}(\Omega)$ and $f\in L^{2}(0,T;H^{-1}(\Omega))$,
$\{ \aij\}_{i,j=1}^{N}$ satisfy (\ref{elipt}), and for some $p_{1},p_{2}\in
[2,\frac{2N}{N-2} )$ we have
$b\in L^{\infty}(0,T;L^{\frac{2p_{1}}{p_{1}-2}}(\Omega))$,
\hd $c\in L^{\infty}(0,T;L^{\frac{p_{2}}{p_{2}-2}}(\Omega))$.
Then for each $t\in [0,T]$ and $n \in \N $ the approximate solution $\un$
satisfies  the following estimate
\[
I^{1-\alpha} \| \una{t} \|_{L^{2}(\Omega)}^{2} +\frac{\alpha}{\gja}
\int_{0}^{t} \int_{0}^{\tau} (\tau - s )^{-\alpha-1}
\nolk{ \una{\tau} - \una{s} }ds \dt
\]
\[
+\frac{1}{\gja} \int^{t}_{0} \tau^{-\alpha} \nolk{ \una{\tau} - \un_{0}(\cdot)}
\dt + \lambda \intt \nolk{ D\una{\tau}}
\]
\eqq{\leq C_{0}\left( \nolk{u_{0}} + \intt \nomk{f(\cdot, \tau) } \dt
+ \delta_{n}\right),   }{a8}
where $C_{0}$ depends only on $\| b \|_{L^{\infty}(0,T;
L^{\frac{2p_{1}}{p_{1}-2}}(\Omega) )}$,
$\| c \|_{L^{\infty}(0,T;L^{\frac{p_{2}}{p_{2}-2}}(\Omega) )}$, $\lambda$,
$\alpha$ and $T$ and $\delta_{n}\rightarrow 0$ uniformly with respect to $t$,
if $n\rightarrow \infty$.
\label{estifirst}
\end{lem}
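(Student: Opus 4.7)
My plan is to derive \eqref{a8} by a standard energy method. The idea is to test the approximate equation \eqref{a7} against $\un$ itself, apply Lemma \ref{Ta} to handle the Caputo term, use ellipticity \eqref{eliptn} to extract a coercive gradient contribution, control the lower-order and source terms by Hölder, Gagliardo--Nirenberg interpolation and Young's inequality, and close the estimate by a fractional Gronwall argument. The parameter $\delta_{n}$ will originate from the convergence $\fjn \to f$ in $L^{2}(0,T;\hmjo)$.

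First I multiply \eqref{a7} by $\cnmt$ and sum over $m=1,\dots,n$ to obtain
\[
\io \da \un \cdot \un\,dx + \sij \io \anijxt \partial_{j}\un\partial_{i}\un\,dx = \sj \io \bjnxt\partial_{j}\un\cdot\un\,dx + \io c^{n}(\un)^{2}\,dx + \langle \fjnxt,\una{t}\rangle_{\hmj\times\hjzo}.
\]
Corollary \ref{coroone} guarantees $\un(x,\cdot)\in AC[0,T]$ and $t^{1-\alpha}\un_{t}\in L^{\infty}(\OT)$, so Lemma \ref{Ta} applies with $w=\un$. Multiplying the identity by $2$, the first term transforms into $\da\nolk{\una{t}}$ plus exactly the two nonnegative quantities on the left-hand side of \eqref{a8}, and by \eqref{eliptn} the elliptic form is bounded below by $2\la\nolk{\nabla\una{t}}$.

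Next I estimate the remaining terms. The Gagliardo--Nirenberg interpolation
\[
\|\una{t}\|_{L^{p_{1}}(\Omega)}\leq C\nol{\una{t}}^{1-\theta_{1}}\nol{\nabla\una{t}}^{\theta_{1}},\qquad \theta_{1}=\tfrac{N(p_{1}-2)}{2p_{1}}<1,
\]
(valid since $p_{1}<\tfrac{2N}{N-2}$), combined with Hölder, yields
\[
\Big|\sj\io \bjnxt\partial_{j}\un\cdot\un\,dx\Big|\leq C\|b\|_{L^{\infty}(0,T;L^{2p_{1}/(p_{1}-2)}(\Omega))}\nol{\nabla\una{t}}^{1+\theta_{1}}\nol{\una{t}}^{1-\theta_{1}}.
\]
Young's inequality with exponents $\tfrac{2}{1+\theta_{1}}$ and $\tfrac{2}{1-\theta_{1}}$ then gives a bound of the form $\ep\nolk{\nabla\una{t}}+C(\ep,\|b\|)\nolk{\una{t}}$. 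The $c$-term is handled identically using $p_{2},\theta_{2}$, and the source term is split via duality and Young into $\langle\fjnxt,\una{t}\rangle\leq \ep\nolk{\nabla\una{t}}+C(\ep)\nomk{\fjnxt}$. Choosing $\ep$ small enough to absorb the cumulative $\ep$-gradient contributions into $2\la\nolk{\nabla\una{t}}$ leads to the pointwise fractional differential inequality
\[
\da\nolk{\una{t}}+(\text{two nonneg.\ terms from Lemma \ref{Ta}})+\la\nolk{\nabla\una{t}}\leq C_{1}\nolk{\una{t}}+C_{2}\nomk{\fjnxt}.
\]

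To close, I first discard the dissipation and gradient terms and apply a fractional Gronwall inequality of Mittag--Leffler type to $\da\nolk{\una{t}}\leq C_{1}\nolk{\una{t}}+C_{2}\nomk{\fjnxt}$, with initial datum $\nolk{u^{n}_{0}}\leq \nolk{u_{0}}$. This yields $\intt\nolk{\una{\tau}}\,d\tau\leq C(\nolk{u_{0}}+\intt\nomk{\fjn}\,d\tau)$. Substituting this bound into the integrated version of the full pointwise inequality, and using
\[
\intt\da\nolk{\un(\cdot,s)}\,ds=\ija\nolk{\una{t}}-\nolk{u^{n}_{0}}\frac{t^{1-\alpha}}{\Gamma(2-\alpha)},
\]
produces \eqref{a8}. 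The error $\delta_{n}$ absorbs $\intt\nomk{\fjn}-\intt\nomk{f}$, which tends to $0$ uniformly in $t\in[0,T]$ because $\fjn\to f$ in $L^{2}(0,T;\hmjo)$ and $\fn$ is the orthogonal projection of $\fjn$ onto $\spann\{\vp_{1},\dots,\vp_{n}\}$. The main obstacle is the combined interpolation/fractional Gronwall step for the unbounded lower-order coefficients, since a direct Poincaré absorption of $\nolk{\un}$ into $\la\nolk{\nabla\un}$ is unavailable without smallness hypotheses on $b,c$.
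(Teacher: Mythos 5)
Your proposal is correct and follows essentially the same route as the paper's proof: testing \eqref{a7} against $\un$, invoking lemma~\ref{Ta} via corollary~\ref{coroone}, absorbing the $b$- and $c$-terms through H\"older plus Sobolev interpolation and Young (the paper writes the interpolation as $\|\un\|_{L^{p_i}}^{2}\le\ep\|D\un\|_{L^2}^{2}+c(\ep,p_i)\|\un\|_{L^2}^{2}$, which is your Gagliardo--Nirenberg/Young step), closing with the fractional Gronwall lemma of Mittag--Leffler type (proposition~\ref{gronwall}) applied after an $I^{\alpha}$ integration, and extracting $\delta_{n}$ from the mollification of $f$. The only cosmetic difference is that the paper bounds $\int_t^{t+1/n}\nomk{f(\cdot,\tau)}\dt$ directly to define $\delta_n$, whereas you argue via $\fjn\to f$ in $L^{2}(0,T;\hmjo)$; both give the required uniform-in-$t$ convergence.
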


\begin{proof}
We multiply (\ref{a7}) by $\cnmt$ and sum over $m=1, \dots, n$. Then we have
\[
\io \da \un (x,t)\un (x,t)dx +  \sij   \io \anijxt
\partial_{j}\un(x,t) \partial_{i}\un(x,t) dx
\]
\[
= \sj  \io \bjnxt \partial_{j}\un(x,t) \un (x,t) dx
+\io c^{n}(x,t) |\un(x,t)|^{2}  dx
\]
\[
+ \langle \fjnxt ,\un(x,t) \rangle_{H^{-1}\times \hjzo}.
\]
By corollary~\ref{coroone} the function $\un $ satisfies the assumption of
lemma~\ref{Ta}, so that (\ref{d}) and ellipticity condition (\ref{eliptn})
yield
\[
\jd \da \| \una{t} \|_{L^{2}(\Omega)}^{2} +\frac{\alpha}{2\gja} \int_{0}^{t}
(t-\tau )^{-\alpha-1} \nolk{\una{t} -\una{\tau}  } \dt
\]
\[
+\frac{1}{2\gja}  t^{-\alpha} \nolk{ \una{t} - \un_{0} (\cdot )}
+ \lambda  \nolk{ D\una{t}}
\]
\[
\leq \sj  \io \bjnxt \partial_{j}\un(x,t) \un (x,t) dx
+ \io c^{n}(x,t) |\un(x,t)|^{2}  dx
\]
\eqq{
+  \frac{\lambda}{2}  \nolk{ D\una{t}} + \frac{1}{2\lambda}
\nomk{ \fjn (\cdot, t )}.
}{a9}
First we obtain the estimate for the lower-order terms.
In particular, if we denote $b^{n}=(b^{n}_{1}, \dots , b^{n}_{N})$, then
we have
\[
\da \| \una{t} \|_{L^{2}(\Omega)}^{2} + \lambda  \nolk{ D\una{t}}
\]
\[
\leq  \io |b^{n}(x,t)| |D \un(x,t)| |\un (x,t)| dx +2\io |c^{n}(x,t)|
|\un(x,t)|^{2}  dx + \frac{1}{\lambda} \nomk{ \fjn (\cdot, t )}
\]
\[
\leq \frac{\lambda}{4} \nolk{ D\una{t}}+ \frac{1}{\lambda}
\| b^{n} (\cdot , t)\|_{L^{\frac{2p_{1}}{p_{1}-2} }(\Omega) }^{2} \|
\un (\cdot, t )\|^{2}_{L^{p_{1}}(\Omega)}
\]
\[
+2\| c^{n}(\cdot, t )\|_{L^{\frac{p_{2}}{p_{2}-2} }(\Omega) } \| \un (\cdot, t)
\|^{2}_{L^{p_{2}}(\Omega)} + \frac{1}{\lambda} \nomk{ \fjn (\cdot, t )}
\]

\[
\leq \frac{\lambda}{4} \nolk{ D\una{t}}+ \frac{1}{\lambda} \| b^{n} (\cdot , t)
\|_{L^{\frac{2p_{1}}{p_{1}-2} }(\Omega) }^{2} \left[ \ep_{1}\| D\un (\cdot, t )
\|^{2}_{L^{2}(\Omega)} +c(\ep_{1}, p_{1}) \| \un (\cdot, t  )\|
_{L^{2}(\Omega)}^{2} \right]
\]
\[
+ \| c^{n}(\cdot, t )\|_{L^{\frac{p_{2}}{p_{2}-2} }(\Omega) }
\left[ \ep_{2}\| D\un (\cdot, t )\|^{2}_{L^{2}(\Omega)} +c(\ep_{2}, p_{2})
\| \un (\cdot, t  )\|_{L^{2}(\Omega)}^{2} \right] + \frac{1}{\lambda}
\nomk{ \fjn (\cdot, t )}.
\]
If we take $\ep_{1}$, $\ep_{2}$ small enough, then
\[
\da \| \una{t} \|_{L^{2}(\Omega)}^{2} + \frac{\lambda}{2}  \nolk{ D\una{t}}
\]
\eqq{
\leq
h_{n}(t)\| \un(\cdot, t ) \|^{2}_{L^{2}(\Omega)}+ \frac{1}{\lambda}
\nomk{ \fjn (\cdot, t )},
}{d4}
where the function  $h_{n}(t)$ depends continuously on some powers
of  $\| b^{n} (\cdot , t)\|_{L^{\frac{2p_{1}}{p_{1}-2} }(\Omega) }$,
\hd $\| c^{n} (\cdot , t)\|_{L^{\frac{p_{2}}{p_{2}-2} }(\Omega) }$ and
$\lambda$. If we apply $\ia$ to the sides of (\ref{d4}), then
\eqq{
\| \una{t} \|_{L^{2}(\Omega)}^{2} \leq \| \una{0} \|_{L^{2}(\Omega)}^{2}
+ \frac{1}{\lambda}\ia  \nomk{ \fjn (\cdot, t )}
+ g_{n}(t) \ia \| \una{t} \|_{L^{2}(\Omega)}^{2} ,  }{d5}
where the function $g_{n}(t)$ depends continuously on some powers of
$\| b^{n} \|_{L^{\infty}(0,t;L^{\frac{2p_{1}}{p_{1}-2} }(\Omega)) }$,
\hd $\| c^{n} \|_{L^{\infty}(0,t;L^{\frac{p_{2}}{p_{2}-2} }(\Omega)) }$
and $\lambda$. We apply a generalized Gronwall lemma
(proposition~\ref{gronwall} in appendix) to obtain
\[
\| \una{t} \|_{L^{2}(\Omega)}^{2} \leq  \| \una{0} \|_{L^{2}(\Omega)}^{2}
\sumi g^{k}_{n}(t) \frac{t^{\alpha k }}{\Gamma(1+\alpha k )}
\]
\eqq{
+ \frac{1}{\lambda} \sumi g^{k}_{n}(t) \left( I^{\alpha(k+1)}
\nomk{ \fjn (\cdot, t )}   \right)(t). }{d55}
The convergence of the series follows from the d'Alembert criterion and
\m{$\lim_{x\rightarrow \infty} \frac{\Gamma(x+\alpha)}{\Gamma(x)x^{\alpha}}=1$. }

We once again use the inequality (\ref{a9}). We apply the operator $I$
to both sides of (\ref{a9}). Then using the identity $I=I^{1-\alpha}\ia$
(see theorem 2.5 in \cite{Samko}) and
$\un (x, \cdot )\in AC[0,T]$ for each $x\in \Omega$ and applying
proposition~\ref{propone}, we obtain
\[
\ija  \| \una{t} \|_{L^{2}(\Omega)}^{2} +\frac{\alpha}{\gja} \int_{0}^{t}
\int_{0}^{\tau} \frac{\nolk{\una{\tau } -\una{s}  } }{|\tau -s|^{\alpha+1}}
ds \dt
\]
\[
+\frac{1}{\gja}  \izt \tau^{-\alpha} \nolk{ \una{\tau} - \un_{0} (\cdot )}
\dt   + \lambda \izt  \nolk{ D\una{\tau}} \dt
\]
\[
\leq \frac{t^{1-\alpha}}{\Gamma(2-\alpha)}\nolk{u_{0}} + \frac{1}{\lambda}
\izt \nomk{ \fjn (\cdot, \tau )} \dt
\]
\[
+ 2\sj \izt \io \bjn(x,\tau) \partial_{j}\un(x,\tau) \un (x,\tau) dx \dt
+ 2\izt \io c^{n}(x,\tau) |\un(x,\tau)|^{2}  dx \dt.
\]
If we estimate the last two integrals similarly to the previous, then
\[
\ija  \| \una{t} \|_{L^{2}(\Omega)}^{2} +\frac{\alpha}{\gja}
\int_{0}^{t} \int_{0}^{\tau} \frac{\nolk{\una{\tau } -\una{s}  } }
{|\tau -s|^{\alpha+1}} ds \dt
\]
\[
+ \frac{1}{\gja}  \izt \tau^{-\alpha} \nolk{ \una{\tau} - \un_{0} (\cdot )}
\dt   + \lambda \izt  \nolk{ D\una{\tau}} \dt
\]
\eqq{
\leq \frac{2t^{1-\alpha}}{\Gamma(2-\alpha)}\nolk{u_{0}} + \frac{2}{\lambda}
\izt \nomk{ \fjn (\cdot, \tau )} \dt + g_{n}(t) \izt  \| \una{\tau} \|
_{L^{2}(\Omega)}^{2} \dt.
}{e5}
Using (\ref{d55}) we have
\[
g_{n}(t) \izt  \| \una{\tau} \|_{L^{2}(\Omega)}^{2} \dt \leq  \| \una{0} \|
_{L^{2}(\Omega)}^{2}
\sumi g^{k+1}_{n}(t) \frac{t^{\alpha k +1 }}{\Gamma(2+\alpha k )}
\]
\[
+\frac{1}{\lambda} \sumi g^{k+1}_{n}(t) \left( I^{\alpha(k+1)+1}
\nomk{ \fjn (\cdot, t )}   \right)(t).
\]
Using the Mittag-Leffler function $E_{\alpha, \beta}(z)
= \sumi \frac{z^{k}}{\Gamma(\alpha k +\beta)}$, we can write
\[
\sup_{n} \sup_{t\in (0,T)} \sumi g^{k+1}_{n}(t) \frac{t^{\alpha k +1}}
{\Gamma(2+\alpha k )} = \sup_{n}  \sup_{t\in (0,T)} g_{n}(t) t
E_{\alpha,2}(t^{\alpha} g_{n}(t))
\]
\eqq{=\sup_{n}  g_{n}(T) T E_{\alpha,2}(T^{\alpha} g_{n}(T)) \equiv d_{0}
<\infty, }{d6}
where $d_{0}$ depends only on $\| b \|_{L^{\infty}(0,T;
L^{\frac{2p_{1}}{p_{1}-2}}(\Omega) )}$,
$\| c \|_{L^{\infty}(0,T;L^{\frac{p_{2}}{p_{2}-2}}(\Omega) )}$, $\lambda$,
$\alpha$, $T$, and the convergence of the series follows by
$\lim_{x\rightarrow \infty} \frac{\Gamma(x+\alpha)}{\Gamma(x)x^{\alpha}}=1$.
The second sum is estimated as follows
\[
\sumi g^{k+1}_{n}(t) \left( I^{\alpha(k+1)+1} \nomk{ \fjn (\cdot, t )}
\right)(t)
\]
\[
= \sumi \izt \frac{(t-\tau)^{\alpha(k+1)}g^{k+1}_{n}(t) }
{\Gamma(\alpha(k+1)+1)} \nomk{ \fjn (\cdot, \tau )} \dt
\]
\[
\leq \sumi \frac{t^{\alpha(k+1)}g^{k+1}_{n}(t) }{\Gamma(\alpha(k+1)+1)}
\izt \nomk{ \fjn (\cdot, \tau )} \dt \leq  E_{\alpha,1}(t^{\alpha} g_{n}(t))
\izt \nomk{ \fjn (\cdot, \tau )} \dt.
\]
Next we denote
\eqq{\sup_{n} \sup_{t\in (0,T)} E_{\alpha,1}(t^{\alpha} g_{n}(t))
= \sup_{n} E_{\alpha,1}(T^{\alpha} g_{n}(T))\equiv d_{1}<\infty, }{d7}
where $d_{1}$ depends only on
$\| b \|_{L^{\infty}(0,T;L^{\frac{2p_{1}}{p_{1}-2}}(\Omega) )}$,
$\| c \|_{L^{\infty}(0,T;L^{\frac{p_{2}}{p_{2}-2}}(\Omega) )}$, $\lambda$,
$\alpha$ and $T$.
We note that
\[
\intt \nomk{ \fjn (\cdot, \tau )} \dt \leq \intt \nomk{ f (\cdot, \tau )} \dt
+ \int_{t}^{t+\frac{1}{n}}  \nomk{ f (\cdot, \tau )} \dt.
\]
Thus, setting
\[
\delta_{n}= \sup_{t\in (0,T-\frac{1}{n})} \int_{t}^{t+\frac{1}{n}}
\nomk{ f (\cdot, \tau )} \dt,
\]
and using the assumption concerning $f$, we see that
$\delta_{n}\rightarrow 0$ uniformly with respect to $t$
as $n\rightarrow \infty$.  Therefore
\eqq{ g_{n}(t)\izt \| \una{\tau} \|_{L^{2}(\Omega)}^{2} \dt \leq d_{0} \| u_{0}
\|_{L^{2}(\Omega)}^{2} + \frac{d_{1} }{\lambda} \| f \|^{2}_{L^{2}(0,t; \hmjo)}
+ \frac{d_{1}}{\lambda} \delta_{n},}{d8}
for each $t\in [0,T]$. This estimate together with (\ref{e5}) give (\ref{a8}).
\end{proof}

\begin{proof}[Proof of theorem~\ref{mainweak}]
Denote by $\czb$ the right-hand side of (\ref{a12}). Lemma~\ref{estifirst}
yields a bound for $\un$
\eqq{
\| \nabla \un \|_{L^{2}(0,T;\ld)} + \| \un \|_{ \dot{H}^{\frac{\alpha}{2}}
(0,T;\ld) } \leq \bar{C}_0,
}{a13}
where
\[
\| w \|_{ \dot{H}^{\frac{\alpha}{2}}(0,T;\ld) } = \left( \intT \intT
\frac{ \| w(\cdot , \tau) -w(\cdot, s)\|_{\ld}^{2}}{|\tau - s |^{1+\alpha}} ds
\dt  \right)^{\frac{1}{2}}.
\]
Now we estimate the fractional  derivative $\da \un $. If $w\in \hjzo$, then
$w(x)=\sum_{m=1}^{\infty}d_{m}\vm(x) $, where $d_{m}$ are some numbers and
the series converge in $\hjzo$.
We denote $w^{n}(x)=\sum_{m=1}^{n}d_{m}\vm(x) $. Multiplying (\ref{a7})
by $d_{m}$  and summing from $m=1$ to $n$,  we obtain
\[
\io \da \un (x,t)w (x)dx +  \sij
\io \anijxt \partial_{j}\un(x,t) \partial_{i}w^{n}(x) dx
\]
\[
= \sj  \io \bjnxt \partial_{j}\un(x,t) w^{n}(x) dx +\io c^{n}(x,t) \un(x,t)
w^{n}(x) dx
\]
\[
+ \langle \fjnxt, w(x) \rangle_{H^{-1}\times \hjzo}.
\]
Hence, using proposition~\ref{elipc} and the H\"older
inequality,
we have
\[
\left|
\io \da \un (x,t)w (x)dx  \right|\leq \mu \nol{D\un (\cdot, t )} \nol{Dw^{n}}
+ \| b \|_{L^{\frac{2p_{1}}{p_{1}-2}}(\Omega)  } \| \nabla \un \|_{\ld} \|
w^{n} \|_{L^{p_{1}}(\Omega)}
\]
\[
+ \| c \|_{L^{\frac{p_{2}}{p_{2}-2}}(\Omega)  } \| \un \|_{L^{p_{2}}(\Omega)}\|
w^{n} \|_{L^{p_{2}}(\Omega)} + \| \fjn (\cdot , t ) \|_{H^{-1}(\Omega)}
\| w \|_{\hjzo}.
\]
The function $\un$ is absolutely continuous, and so
we have  $\da \un (x,t)= \frac{d}{d t }\ija [\un (x,t)-u^{n}_{0}(x)]$  and
\[
\left\| \frac{d}{d t }\ija [\un (x,t)-u^{n}_{0}(x)] \right\|_{\hmjo}
= \sup_{\| w \|_{\hjzo}=1} \left|   \io \da \un (x,t)w (x)dx   \right|.
\]
Thus, from the above inequality together with (\ref{a13}), the  Sobolev
embedding and the Poincar\'e inequality yield
\eqq{\sup_{n}  \left\| \frac{d}{d t } \ija [\un -u^{n}_{0}] \right
\|_{L^{2}(0,T;\hmjo)} < \infty.}
{a15}
Therefore, the sequence $\ija [\un -u^{n}_{0}]$ is uniformly bounded in
${}_{0}H^{1}(0,T;\hmjo)$.  By estimates (\ref{a13}), (\ref{a15}) and the weak
compactness argument we obtain $u \in L^{2}(0,T;\hjzo)\cap H^{\frac{\alpha}{2}}
(0,T;\ld)$ and   $v \in {}_{0}H^{1}(0,T;\hmjo)$ such that
there exists a subsequence $\unk$ such that
\eqq{
\unk \rightharpoonup u, \hd \hd \nabla\unk \rightharpoonup \nabla u \hd
\m{ in } L^{2}(0,T;\ld),
}{a16}
\eqq{\ija [\unk -\unk_{0}] \rightharpoonup  v \hd \m{ in } H^{1}(0,T;\hmjo ), }
{a17}
\eqq{
\m{if } p_{2}>4, \m{ then } \unk \rightharpoonup u,  \hd \m{ in }
L^{2}(0,T;L^{\frac{p_{2}}{2}}\Omega),
}{a166}
where the last weak limit is a consequence of the interpolation inequality
\[
\| \unk \|_{L^{2}(0,T;L^{\frac{p_{2}}{2}})(\Omega)} \leq C_{0} \|
\nabla \unk \|_{L^{2}(0,T;\ld)}^{\theta} \| \unk \|_{L^{2}(0,T;\ld)}^{1-\theta},
\]
which holds by $\frac{p_{2}}{2}\in (2,\frac{2N}{N-2})$.

\no First we would like to show that $\ddt \ija [u-u_{0}]$ exists in the
weak sense in $L^{2}(0,T;\hmjo)$ and $\ddt \ija [u-u_{0}]=\ddt v$. Indeed,
we take $\phi\in C^{\infty}_{0}(0,T)$ and $\vp \in \hjzo$ and by the weak
convergence we have
\[
\int_{0}^{T}\phi(t) \left\langle \frac{d}{dt}v(\cdot, t), \vp
\right\rangle_{\hmj \times \hjzo} dt = \limk \int_{0}^{T} \phi(t)
\left\langle \ddt \ija [\unk (\cdot, t)-\unk_{0}], \vp \right\rangle _{\hmj
\times \hjzo} dt
\]
\[
= \limk \int_{0}^{T} \phi(t) \io  \ddt \ija [\unk (x, t)-\unk_{0}(x)]  \vp(x)
dx  dt
\]
\[
= \limk \io \int_{0}^{T} \phi(t)   \ddt \ija [\unk (x, t)-\unk_{0}(x)]
dt \vp(x) dx
\]
\[
= -\limk \io \int_{0}^{T} \phi'(t)    \ija [\unk (x, t)-\unk_{0}(x)]
dt \vp(x) dx
\]
\[
= - \io \int_{0}^{T} \phi'(t)    \ija [u (x, t)-u_{0}(x)]  dt \vp(x) dx,
\]
where the last equality is a consequence of the weak continuity of $\ija$
on the $L^{2}$-spaces (theorem~2.6, \cite{Samko}) and in the previous one
we were allowed to integrate by parts, because $\unk(x,\cdot)\in AC[0,T]$
and so $\ija \unk (x, \cdot )\in AC[0,T]$ by proposition~\ref{propthree}
in appendix. Thus we obtain
\[
\int_{0}^{T}\phi(t) \left\langle v(\cdot, t), \vp \right\rangle_{\hmj
\times \hjzo} dt
= -\int_{0}^{T}\phi'(t) \left\langle \ija [u(\cdot, t)-u_{0}(\cdot)],
\vp \right\rangle_{\hmj \times \hjzo} dt,
\]
and so  $\ddt \ija [u-u_{0}]=\ddt v\in L^{2}(0,T;\hmjo)$ in the  weak
sense
and estimate (\ref{a12}) holds.

Now we shall show the identity (\ref{a10}). By the density argument it is
enough to prove it for $w(x)=\sum_{m=1}^{K} d_{m} \vm(x)$, where $d_{m}$
are arbitrary numbers. We multiply (\ref{a7}) by $d_{m}$ and sum from $m=1$
to $K$. Then, for fixed $t_{0}\in (0,T)$, we multiply the sides by $\etet$,
where $\ete$ is a standard mollifier function  and finally we integrate
with respect to $t\in (0,T)$.  Hence
\[
\intT \etet \io \frac{d}{dt} \ija [\unk(x,t) - \unk_{0}(x)]w(x)dx dt
\]
\[
+\sij  \intT   \io \ankijxt \partial_{j}\unk(x,t) \partial_{i}w(x)\etet dx dt
\]
\[
=\sj  \intT   \io b^{n_{k}}_{j}(x,t) \partial_{j}\unk(x,t) w(x)\etet dx dt
\]
\[
+  \intT  \io c^{n_{k}}(x,t) \unk(x,t) w(x)\etet dx dt
\]
\[
+ \intT \io \fjnkxt w(x)\etet dxdt.
\]

We first  take the limits with as $k\rightarrow \infty$ and next
as $\ep \rightarrow 0$.  For $\ep<  T-t_{0}$, integrating  by parts and
using (\ref{a16}) and continuity of $\ija$ on $L^{2}$ (theorem 2.6
in \cite{Samko}),
we obtain
\[
\intT \etet \io \frac{d}{dt} \ija [\unk(x,t) - \unk_{0}(x)]w(x)dx dt
\]
\[
= -\intT \ete'(t +t_{0}) \io  \ija [\unk(x,t) - \unk_{0}(x)]w(x)dx dt
\]
\[
\underset{k\rightarrow \infty}{\longrightarrow} -\intT \ete'(t +t_{0})
\io  \ija [u(x,t) - u_{0}(x)]w(x)dx dt
\]
\[
=\intT \ete(t +t_{0}) \frac{d}{dt}\io  \ija [u(x,t) - u_{0}(x)]w(x)dx dt
\]
\[
\underset{\ep\rightarrow 0}{\longrightarrow} \frac{d}{dt}
\io  \ija [u(x,t_{0}) - u_{0}(x)]w(x)dx \hd \m{ for a.a. } t_{0}\in (0,T).
\]
For the next term we proceed similarly.
The function $\partial_{i}w(x)\etet$ is smooth in $\Omega^{T}$, and
(\ref{b3}) and (\ref{a16}) yield
\[
\intT   \io \ankijxt \partial_{j}\unk(x,t) \partial_{i}w(x)\etet dx dt
\]
\[
\underset{k\rightarrow \infty}{\longrightarrow} \intT
\io \aijxt \partial_{j}u (x,t) \partial_{i}w(x)\etet dx dt
\]
\[
\underset{\ep\rightarrow 0}{\longrightarrow}  \io \aij(x,t_{0})
\partial_{j}u (x,t_{0}) \partial_{i}w(x) dx   \hd \m{ for a.a. } t_{0}
\in (0,T).
\]
The first term on the right-hand side also converges, because from
the assumption we have $\bjnk \longrightarrow b_{j}$ in $L^{2}(\OT)$.

We have to consider the two cases to deal with the next term on the right-hand
side. If $p_{2}\in [2,4]$, then $c\in \ldT$ and  $\cnk \longrightarrow c$ in
$\ldT$.  Thus $\cnk \unk \rightharpoonup cu $ in $\ldT$.
In the case of $p_{2}>4$ we can write
\[
\left| \intT   \io \left( c^{n_{k}}(x,t) \unk(x,t)- c(x,t)u(x,t)\right) w(x)
\etet dx dt \right|
\]
\[
\leq \intT   \io |c^{n_{k}}(x,t)- c(x,t)| |\unk(x,t)| |w(x)|\etet dx dt
\]
\[
+ \left| \intT   \io c(x,t) (\unk(x,t)- u(x,t)) w(x)\etet dx dt \right|.
\]
The first term converges to zero, because $\cnk \longrightarrow c $ in
$L^{2}(0,T;L^{\frac{p_{2}}{p_{2}-2}}(\Omega))$ and $\unk $ is bounded
in $\left( L^{2}(0,T;L^{\frac{p_{2}}{p_{2}-2}}(\Omega)) \right)^{\ast}
= L^{2}(0,T;L^{\frac{p_{2}}{2}}(\Omega))$ by (\ref{a166}).
In terms of (\ref{a166}), we can deal with the second  term.

Finally we obtain
\[
\intT \io \fjnkxt w(x)\etet dxdt =\intT \langle  \fjnk (t), w\rangle_{\hmj
\times \hjzo } \etet dt
\]
\[
\underset{k\rightarrow \infty}{\longrightarrow}  \intT \langle  f (t),
w\rangle_{\hmj \times \hjzo } \etet dt
\]
\[
\underset{\ep\rightarrow 0}{\longrightarrow} \langle  f (t), w\rangle_{\hmj
\times \hjzo }  \hd \m{ for a.a. } t_{0}\in (0,T),
\]
and (\ref{a10}) is proved { for $\vp=\sum_{m=1}^{K}d_{m}\vp_{m}$ and a.a. $t\in (0,T)$. By density argument we deduce (\ref{a10}) for all $\vp \in H^{1}_{0}(\Omega)$ and a.a. $t\in (0,T)$. }

In the case of $\alpha\in (\frac{1}{2},1)$, the continuity of $u$ and
the equality $u_{|t=0}=u_{0}$ immediately follow from
proposition~\ref{initialsecond}.

It remains to prove the uniqueness of solutions in theorem~\ref{mainweak}.
Assume that $u\in W^{{\alpha}}(0, \hjzo, \hmjo )$ satisfies (\ref{a10}) with
$f\equiv 0 $ and $u_{0}\equiv 0$. Then we set $u_{n}(x,\kr{\tau})
= \sum_{k=1}^{n} d_{k}(\kr{\tau}) \vk (x)$, where $d_{k}(\kr{\tau}) = \io u(x,\kr{\tau})\vk(x)dx$.
Setting $\varphi=\varphi_k$ in (\ref{a10}) and multiplying by $d_{k}(t)$
and summing from
$k=1$ to $n$, we have
\[
\izt \left\langle \frac{d}{d\kr{\tau} } I^{1-\alpha}u(\cdot,\tau),u_{n}(\cdot, \tau)
\right\rangle \dt + \sij \izt \io \aij (x, \tau) \partial_{j}u(x,\tau)
\partial_{i} u_{n}(x , \tau)dx \dt
\]
\[
= \sj \izt \io b_{j}(x,\tau)\partial_{j}u(x,\tau)u_{n}(x , \tau)  dx \dt
+ \izt \io c(x,\tau)u(x,\tau)u_{n}(x , \tau) dx \dt .
\]
The convergence $u_{n} \longrightarrow u$ in $L^{2}(0,t;\hjzo)$ yields
\[
\kr{\lim_{n\rightarrow \infty} }  \izt \left\langle \frac{d}{d\kr{\tau}}  I^{1-\alpha}u(\cdot,\tau),\kr{u_{n}}(\cdot, \tau)
\right\rangle \dt + \sij \izt \io \aij (x, \tau)
\partial_{j}u(x,\tau)\partial_{i} u(x , \tau)dx \dt
\]
\[
= \sj \izt \io b_{j}(x,\tau)\partial_{j}u(x,\tau)u(x , \tau) dx \dt
+ \izt \io c(x,\tau)|u(x,\tau)|^{2}dx \dt .
\]
\kr{We have
\eqq{\izt \left\langle \frac{d}{d\tau}  I^{1-\alpha}u(\cdot,\tau),u_{n}(\cdot, \tau)
\right\rangle \dt=\izt \left\langle \frac{d}{d\tau}  I^{1-\alpha}u_{n}(\cdot,\tau),u_{n}(\cdot, \tau)
\right\rangle \dt.}{nowea}
Indeed,
\[
\izt \left\langle \frac{d}{d\tau}  I^{1-\alpha}u(\cdot,\tau),u_{n}(\cdot, \tau)
\right\rangle \dt=\izt \sun d_{k}(\tau) \left\langle \frac{d}{dt}  I^{1-\alpha}u(\cdot,\tau),\vk(\cdot)
\right\rangle \dt
\]
\[
=\izt \sun d_{k}(\tau) \frac{d}{d \tau} \left\langle   I^{1-\alpha}u(\cdot,\tau),\vk(\cdot)
\right\rangle \dt =\izt \sun d_{k}(\tau) \frac{d}{dt} I^{1-\alpha} \io    u(y,\tau)\vk(y)dy
 \dt
\]
\[
=\izt \sun d_{k}(\tau) \frac{d}{d \tau} I^{1-\alpha} d_{k}(\tau) \dt =\izt \sum_{k,m=1}^{n} d_{m}(\tau) \frac{d}{d \tau} I^{1-\alpha} d_{k}(\tau) \io \vk(x) \vm(x)dx \dt
\]
\[
=\izt \sum_{k,m=1}^{n} d_{m}(\tau) \frac{d}{d \tau} I^{1-\alpha} d_{k}(\tau) \left\langle \vk(\cdot), \vm(\cdot) \right\rangle\dt=\izt   \left\langle  \frac{d}{d \tau} I^{1-\alpha} u_{n}(\cdot, \tau), u_{n}(\cdot, \tau) \right\rangle\dt.
\]
Then we can write
\[
\lim_{n\rightarrow \infty} \izt \left\langle \frac{d}{d \tau}  I^{1-\alpha}u(\cdot,\tau),u_{n}(\cdot, \tau)
\right\rangle \dt
= \lim_{n\rightarrow \infty} \izt \left\langle \frac{d}{d\tau}
I^{1-\alpha}u_{n}(\cdot,\tau),u_{n}(\cdot, \tau) \right\rangle \dt
\]
}
\[
= \lim_{n\rightarrow \infty} \io \izt  \frac{d}{dt}  I^{1-\alpha}u_{n}(x,\tau)
\cdot u_{n}(x, \tau)  \dt dx \geq \frac{t^{-\alpha}}{2\gja}
\liminf_{n\rightarrow \infty} \io \izt  |u_{n}(x,\tau)|^{2} \dt dx
\]
\[
\geq \frac{t^{-\alpha}}{2\gja}  \izt \io |u(x,\tau)|^{2}dx \dt
\]
for a.a. $t\in [0,T]$, where we applied corollary~\ref{lowestiYa}.
Using ellipticity condition (\ref{elipt}), we obtain
\[
\frac{t^{-\alpha}}{2\gja}  \izt \|u(\cdot ,\tau)\|_{\ld}^{2}\dt
+ \frac{\lambda}{2} \izt \| Du(\cdot , \tau) \|_{\ld}^{2}\dt \leq C_{0}
\izt \| u(\cdot , \tau) \|_{\ld}^{2}\dt,
\]
where $C_{0}$ is a constant which depends only on the norms of $b$ in
$L^{\infty}(0,T;L^{\frac{2p_{1}}{p_{1}-2}}(\Omega) )$ and $c$ in $L^{\infty}
(0,T;L^{\frac{p_{2}}{p_{2}-2}}(\Omega) )$.
Therefore we deduce that $u\equiv 0 $ on $\Omega \times (0,t)$,
provided $t$ is small enough.  Repeating this argument, we deduce that
$u\equiv 0$ on $\Omega \times (0,T)$.
\end{proof}

\begin{proof}[Proof of theorem~\ref{malealpha}]
In the case of $L=\Delta$,  the equality (\ref{a7}) has the following form
\eqq{\io \da \un(x,t) \vm(x) dx=\io\un (x,t) \lap \vm (x) dx +\io \fjn(x,t)
\vm (x)dx.   }{a777}
For $\alpha>\frac{1}{2}$ the result is contained in theorem~\ref{mainweak}.
Now assume that $\alpha\in (\frac{1}{4},\frac{1}{2} ]$.
Then we can apply the Riemann-Liouville  \ derivative $\partial^{\alpha} $ to both sides
of (\ref{a777}) and by proposition~\ref{propfive}, we obtain
\[
\io \dda \un(x,t) \vm(x) dx=\io \partial^{\alpha}\un (x,t) \lap \vm (x) dx
+ \io \partial^{\alpha}\fjn(x,t)\vm (x)dx
\]
\eqq{=\io \da \un (x,t) \lap \vm (x) dx +\frac{t^{-\alpha}}{\gja}\io \un(x,0)
\lap \vm(x) dx+\io \partial^{\alpha}\fjn(x,t)\vm (x)dx.}{o4}
If $w\in \hk^{3}$, then there exist constants  $d_{m}$ such that
$w(x) = \sum_{m=1}^{\infty}d_{m}\vmx $ in $\hk^{3}$.
Multiplying (\ref{o4}) by $d_{m}$ and summing over $m$, we have
\[
\io \dda \un(x,t) w(x) dx=\io \da \un (x,t) \lap w (x) dx
+ \frac{t^{-\alpha}}{\gja}\io \un(x,0) \lap w(x) dx
\]
\eqq{
+ \io \partial^{\alpha}\fjn(x,t)w (x)dx.}{o5}
Then, because $\lap w_{|\partial \Omega}=0 $, we can write
\[
\| \dda u^{n} (\cdot, t) \|_{(\hk^{3})^{\ast}} = \sup_{ \| w \|_{\hk^{3}}=1}
\left| \io \dda \un(x,t) w(x) dx \right| \leq \| \partial^{\alpha}
\fjn (\cdot, t) \|_{(\hk^{3})^{\ast}}
\]
\[
+ \sup_{ \| w \|_{\hk^{3}}=1} \left( \| \da \un (\cdot, t) \|_{\hmjo} \|
\lap w \|_{\hjzo}+\frac{t^{-\alpha}}{\gja} \| \un (\cdot, 0) \|_{\ld} \|
\lap w \|_{\ld} \right)
\]
\eqq{
\leq  \| \da \un (\cdot, t) \|_{\hmjo} +\frac{t^{-\alpha}}{\gja} \| u_{0} \|
_{\ld} + \| \partial^{\alpha} \fjn (\cdot, t) \|_{(\hk^{3})^{\ast}}.
}{o6}
We have to consider two cases. If $\alpha \in (\frac{1}{4}, \frac{1}{2})$,
then squaring and integrating both sides of (\ref{o6}), we obtain
\[
\| \dda u^{n}  \|_{L^{2}(0,T;(\hk^{3})^{\ast})}\leq \sqrt{3} \| \da
u^{n}  \|_{L^{2}(0,T;\hmjo)}
\]
\eqq{+ \frac{\sqrt{3}}{\gja} \left(  \frac{T^{1-2\alpha}}{1- 2\alpha} \right)
^{\frac{1}{2}} \| u_{0} \|_{\ld}+c(\al)\| \partial^{\alpha}
f  \|_{L^{2}(0,T;(\hk^{3})^{\ast})},
}{o7}
where we applied the inequality
\eqq{\| \partial^{\alpha} \fjn  \|_{L^{2}(0,T;(\hk^{3})^{\ast})}
\leq c(\al)\| \partial^{\alpha} f  \|_{L^{2}(0,T;(\hk^{3})^{\ast})},}{qq1}
\kr{given in proposition~\ref{estiapproxH} in the appendix}. By the assumption concerning $f$ and (\ref{a15}) we have a uniform bound
for the right-hand side and proceeding as in the proof of
theorem~\ref{mainweak}, we obtain that  a weak solution $u$ of (\ref{main})
satisfies $I^{1-2\alpha}[u-u_{0}] \in {}_{0}H^{1}(0,T;(\hk^{3})^{\ast})$.
Hence, by proposition~\ref{initialsecond} we see that
$u\in C([0,T];(\hk^{3})^{\ast})$ and $u_{|t=0}=u_{0}$.

If $\alpha=\frac{1}{2}$, then we take the $p$-th power of both sides
of (\ref{o6}), where $p\in (1,2)$. Then we have  $u-u_{0} \in {}_{0}W^{1,p}
(0,T;(\hk^{3})^{\ast})$, so that $u\in C([0,T];(\hk^{3})^{\ast})$ and
$u_{|t=0}=u_{0}$.

For $\alpha \in (\frac{1}{6}, \frac{1}{4}]$, we proceed similarly.
We apply the Riemann-Liouville \ derivative $\partial^{2\alpha}$ to both sides of
(\ref{a777}) and taking the $\hk^{5}$-norm by duality we obtain
\eqq{
\| D^{3\alpha} u^{n} (\cdot, t) \|_{(\hk^{5})^{\ast}}  \leq  \| \dda
\un (\cdot, t) \|_{(\hk^{3})^{\ast}} +\frac{t^{-2\alpha}}{\Gamma(1-2\alpha)}
\| u_{0} \|_{\ld}  +\| \partial^{2\alpha} \fjn (\cdot, t)
\|_{(\hk^{5})^{\ast}},
}{o8}
where we used the equality $\lap^{k} w_{|\partial \Omega} =0 $ for $k=1,2$.
Next, applying (\ref{o7}) and proceeding as in the previous case,
we prove the claim.

In general, if $\alpha \in (0, \frac{1}{2})$ and $k $ is the smallest number
such that $\frac{1}{2} <\alpha(k+1)  <1$, then applying the Riemann-Liouville \ derivative
$\partial^{m}$ to both  sides of (\ref{a777}), we obtain
\[
\io D^{(m+1)\alpha} \un(x,t) w(x) dx=\io D^{m\alpha}\un (x,t) \lap w (x) dx
+ \frac{t^{-m \alpha}}{\Gamma(1-m\alpha)} \io \un (x,0) \lap w (x)dx
\]
\[
+\io \partial^{m\alpha}\fjn(x,t)w (x)dx,
\]
where $w \in \hk^{2m+1}$ and  $m=1, \dots, k$.  Then we have
\[
\| D^{(m+1)\alpha} \un(\cdot ,t) \|_{(\hk^{2m+1})^{\ast}}
\]
\[
\leq  \|  D^{m\alpha}\un (\cdot ,t)\|_{(\hk^{2m-1})^{\ast}}
+ \frac{t^{-m\alpha}}{\Gamma(1-m\alpha)}\| u_{0} \|_{\ld}
+ \| {{\partial}}^{m\alpha}\fjn(\cdot ,t)\|_{(\hk^{2m+1})^{\ast}} .
\]
Using these inequalities for $m=1, \dots, k$, we have
\[
\| D^{(k+1)\alpha} \un(\cdot ,t) \|_{\hkkjs}
\]
\eqq{\leq \| \da \un (\cdot, t )\|_{(\hk^{1})^{\ast}}
+t^{-k\alpha}\| u_{0} \|_{\ld}
\sum_{m=1}^{k}\frac{t^{(k-m)\alpha}}{\Gamma(1-m\alpha)}+\sum_{m=1}^{k} \|
\kr{\partial^{m\alpha} \fjn} \|_{\hkmjs}.
}{o9}
We recall that $(\hk^{1})^{\ast} = (\hjzo)^{\ast}=\hmjo$. Hence using the
assumption, the estimate (\ref{a15}), the condition $k\alpha <\frac{1}{2}$ \kr{and proposition~\ref{estiapproxH}},
we obtain a uniform bound for $\frac{d}{dt}I^{1-(k+1)\alpha} [\un- \un_{0}] $
in $ L^{2}(0,T;(\hk^{2k+1})^{\ast})$. Hence $\frac{d}{dt}I^{1-(k+1)\alpha} [u- u_{0}] \in  L^{2}(0,T;(\hk^{2k+1})^{\ast})$, and
applying proposition~\ref{initialsecond} we finish the proof in this case.

Finally, if  $\frac{1}{2}=(k+1)\alpha$ and $w\in \hk^{2k+3}$, then applying
the Riemann-Liouville \ derivative $\partial^{(k+1)\alpha}$ to both  sides of (\ref{a777}),
we obtain
\[
\io D^{\alpha+\frac{1}{2}} \un (x,t) w(x) dx = \io D^{\frac{1}{2}} \un (x,t)
\lap w(x) dx + \frac{t^{-\frac{1}{2}}}{\Gamma(\frac{1}{2})} \io \un (x,0)
\lap w(x) dx
\]
\[
+ \io \partial^{\frac{1}{2}} \fjn (x,t) w(x)dx.
\]
Then using (\ref{o9}) and taking the $p$-th power of both sides,
we obtain a uniform bound for $D^{\alpha+\frac{1}{2}} \un $ in
$L^{p}(0,T;(\hk^{2k+3})^{\ast})$, provided $p\in [1,2)$.
In order to apply proposition~\ref{initialsecond} we choose $p$
such that $\frac{1}{p}<\frac{1}{2}+\alpha$ and the proof is completed.
\end{proof}

\section{Regular solutions}

Now we shall  prove the existence of regular solution of problem (\ref{main}).
We start  with the proof of the second energy estimate for approximating
solutions.

\begin{lem}
Assume that $u_{0}\in \hjzo $, $f\in L^{2}(0,T;\ld )$ and  $\max_{i,j}\|
\nabla \aij \|_{L^{\infty}(\OT)}<\infty$ and
for some $p_{1},p_{2}\in [2,\frac{2N}{N-2})$ we have
$b\in L^{\infty}(0,T;L^{\frac{2p_{1}}{p_{1}-2}}(\Omega))$,
\hd $c\in L^{\infty}(0,T;L^{\frac{p_{2}}{p_{2}-2}}(\Omega))$.
Then for each $t\in [0,T]$ and $n \in \N $ the approximate solution $\un$
satisfies the following estimate
\[
I^{1-\alpha} \| \nabla \una{t} \|_{L^{2}(\Omega)}^{2} +\frac{\alpha}{\gja}
\int_{0}^{t} \int_{0}^{\tau} (\tau - s )^{-\alpha-1} \nolk{\nabla  \una{\tau}
- \nabla \una{s} }ds \dt
\]
\[
+ \frac{1}{\gja} \int^{t}_{0} \tau^{-\alpha} \nolk{ \nabla \una{\tau}
- \nabla \un_{0} (\cdot )} \dt + \frac{\lambda}{32}
\intt \nolk{ D^{2}\una{\tau}}
\]
\eqq{\leq \frac{t^{1-\alpha}}{\Gamma(2-\alpha)}  \| \nabla u_{0}\|^{2}_{\ld}
+ 4\lambda^{-1} \intt \nolk{f(\cdot, \tau) } \dt
+ 4\lambda^{-1} \widetilde{\delta}_{n}
+ \overline{C}_{0} \intt \| D \un(\cdot , \tau ) \|_{\ld}^{2} \dt,  }{a88}
where $\widetilde{\delta}_{n}\rightarrow 0$ uniformly with respect to $t$
as $n\rightarrow \infty$ and
$\overline{C}_{0}$ depends only on $\max_{i,j}\| \nabla \aij  \|
_{ L^{\infty}(\Omega^{T})}$, the regularity of $\partial \Omega$, $p_{1}$,
$p_{2}$, $\lambda$ and norms
$ \| b \|_{L^{\infty}(0,T; L^{\frac{2p_{1}}{p_{1}-2}}(\Omega))}$,
$ \| c \|_{L^{\infty}(0,T; L^{\frac{p_{2}}{p_{2}-2}}(\Omega))}$.
\label{estisecond}
\end{lem}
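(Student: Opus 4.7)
The strategy parallels lemma~\ref{estifirst}: we test the Galerkin identity (\ref{a7}) with $-\Delta u^n$, which is realized by multiplying by $\lambda_m c_{n,m}(t)$ and summing over $m=1,\dots,n$ (using $-\Delta\varphi_m=\lambda_m\varphi_m$). Since $\varphi_m|_{\partial\Omega}=0$ implies $\Delta u^n|_{\partial\Omega}=0$ as well, boundary contributions in the integrations by parts that follow vanish. Corollary~\ref{coroone} provides enough regularity of $u^n$ in $x$ and $t$ to justify all formal manipulations.

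For the Caputo term I would integrate by parts spatially (valid since $u^n$ and hence $D^\alpha u^n$ vanish on $\partial\Omega$), commute $\partial^\alpha$ with the spatial derivatives, and apply lemma~\ref{Ta} componentwise to $\nabla u^n$. This produces $\tfrac{1}{2}D^\alpha\|\nabla u^n(\cdot,t)\|_{\ld}^2$ plus the two positive fractional-integral terms that appear on the left-hand side of (\ref{a88}). For the principal elliptic term, integration by parts using $\Delta u^n|_{\partial\Omega}=0$ transforms $\sij\io a_{ij}^n\partial_j u^n\,\partial_i(-\Delta u^n)\,dx$ into $\io L_0^n u^n\cdot\Delta u^n\,dx$ with $L_0^n u=\partial_i(a_{ij}^n\partial_j u)$. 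The crucial step is the identity, valid for $u^n\in H^2\cap\hjzo$ on a $C^2$-smooth boundary,
\[
\io a_{ij}^n\,\partial_{ij}^2 u^n\cdot\Delta u^n\,dx=\io a_{ij}^n\,\partial_k\partial_i u^n\,\partial_k\partial_j u^n\,dx+(\text{boundary/curvature corrections}),
\]
which, combined with pointwise ellipticity applied to each vector $\partial_k\nabla u^n$, yields the coercive lower bound $\geq\lambda\|D^2 u^n\|_{\ld}^2$ modulo lower-order terms controlled by $\|\nabla a_{ij}\|_{L^\infty}$, the trace theorem, and the $C^2$-regularity of $\partial\Omega$.

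The lower-order contributions coming from $b_j^n\partial_j u^n$, $c^n u^n$ and $f_{1/n}$ tested against $-\Delta u^n$ are estimated by H\"older's inequality, Sobolev embedding (using $p_1,p_2\in[2,\tfrac{2N}{N-2})$), and interpolation between $H^1$ and $H^2$, in direct analogy with the treatment after (\ref{d4}) in the proof of lemma~\ref{estifirst}; the source term is handled via $|\io f_{1/n}(-\Delta u^n)\,dx|\leq\epsilon\|D^2 u^n\|_{\ld}^2+C_\epsilon\|f_{1/n}\|_{\ld}^2$. With smallness parameters chosen so that every $\|D^2u^n\|^2$ contribution on the right-hand side is absorbed into the coercive elliptic term (leaving the constant $\tfrac{\lambda}{32}$), we obtain a pointwise-in-$t$ inequality whose right-hand side involves only $\|\nabla u^n\|_{\ld}^2$, the data, and a lower-order remainder. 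Applying $I$ to both sides and using the identity $I\cdot D^\alpha g=I^{1-\alpha}(g-g(0))$, which simultaneously generates the $I^{1-\alpha}\|\nabla u^n(t)\|^2$ term on the left and the $\frac{t^{1-\alpha}}{\Gamma(2-\alpha)}\|\nabla u_0\|^2$ term on the right, yields (\ref{a88}). The remainder $\widetilde{\delta}_n$ arises from the approximation $f_{1/n}\to f$ in $L^2(0,T;\ld)$ and is controlled exactly as $\delta_n$ in lemma~\ref{estifirst}.

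The principal difficulty is the elliptic term. The naive pointwise bound $a_{ij}\partial_{ij}^2 u\cdot\Delta u\geq\lambda|\Delta u|^2$ fails even for constant-coefficient $A\neq I$ acting on an indefinite Hessian, so coercivity must be extracted only after integration by parts, via the identity relating $\int a_{ij}\partial_{ij}^2 u\cdot\Delta u\,dx$ to $\int a_{ij}\partial_k\partial_i u\,\partial_k\partial_j u\,dx$. The boundary and curvature terms generated there, together with the cross term $\int(\partial_i a_{ij}^n)\partial_j u^n\cdot\Delta u^n\,dx$, account precisely for the dependence of $\overline{C}_0$ on $\max_{ij}\|\nabla a_{ij}\|_{L^\infty(\OT)}$ and on the $C^2$-regularity of $\partial\Omega$, while the absorption of the lower-order $b,c$-contributions accounts for the dependence on the norms of $b$ and $c$.
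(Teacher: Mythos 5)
Your overall strategy coincides with the paper's: test (\ref{a7}) with $-\Delta u^n$ by multiplying by $\lambda_m c_{n,m}(t)$ and summing, apply lemma~\ref{Ta} componentwise to $\nabla u^n$, extract coercivity of the principal term only after integration by parts, absorb the lower-order and source terms by Young's inequality and Sobolev embedding, and finally integrate in time using $I=I^{1-\alpha}I^{\alpha}$. The paper packages the coercivity step as proposition~\ref{boudary} in the appendix, and your time integration, treatment of $b$, $c$, $f$, and the origin of $\widetilde{\delta}_n$ all match the paper's proof.

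There is, however, a gap at exactly the point you yourself single out as the principal difficulty. After the two integrations by parts one is left with the boundary integral
\[
\sij \iop \aijxt\, (\partial_j u^n)\, \partial_i\Bigl(\frac{\partial u^n}{\partial n}\Bigr)\, dS(x),
\]
which contains \emph{second} derivatives of $u^n$ restricted to $\partial\Omega$. Your proposal to control the ``boundary/curvature corrections'' by ``the trace theorem'' does not work as stated: the trace of $D^2u^n$ on $\partial\Omega$ is not bounded by $\|D^2u^n\|_{\ld}$ (that would require $H^{2+1/2}(\Omega)$ regularity), so this term cannot be absorbed into $\frac{\lambda}{32}\|D^2u^n\|^2_{\ld}$ by a trace estimate alone. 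The missing idea --- the content of proposition~\ref{boudary}, following Ladyzhenskaya --- is that in boundary-fitted coordinates the conditions $u^n_{|\partial\Omega}=0$ and $\Delta u^n_{|\partial\Omega}=0$ force the tangential first derivatives and the second normal derivative to vanish at each boundary point, so the integrand reduces to bounded coefficients times \emph{tangential} derivatives of $|\partial u^n/\partial n|^2$; integrating by parts along the closed manifold $\partial\Omega$ then eliminates every second derivative, leaving $\iop K(x,t)\,|\partial u^n/\partial n|^2\,dS$ with $\|K(\cdot,t)\|_{C(\partial\Omega)}$ controlled by $\max_{i,j}\|\nabla\aij\|_{L^\infty}$ and the $C^2$-norm of $\partial\Omega$. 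Only this purely first-order boundary quantity is then estimated by $\ep\|\nabla^2u^n\|^2_{\ld}+C(\ep)\|\nabla u^n\|^2_{\ld}$ via an interpolation-trace inequality. Without this reduction the coercive constant in front of $\|D^2u^n\|^2_{\ld}$ cannot be salvaged, so you should either prove this boundary reduction or cite it explicitly.
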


\begin{proof}
We multiply (\ref{a7}) by $\cnmt \lambda_{m}$ and sum over $m=1, \dots, n$.
Then
\[
-\io \da \un (x,t)\lap \un (x,t)dx -  \sij   \io \anijxt \partial_{j}\un(x,t)
\partial_{i}\lap \un(x,t) dx
\]
\[
=- \sj  \io \bjnxt \partial_{j}\un(x,t) \lap \un (x,t) dx -\io c^{n}(x,t)
\un(x,t) \lap \un (x,t) dx
\]
\[
 -\io \fjnxt \lap \un(x,t) dx.
\]
Using the boundary condition, we have $\da \un_{|\partial \Omega}=0 $,
$\lap  \un_{|\partial \Omega}=0 $ and integrating by parts, we obtain
\[
\io \da \nabla \un (x,t)\nabla  \un (x,t)dx +  \sij   \io \partial_{i}
\left( \anijxt \partial_{j}\un(x,t) \right) \lap \un(x,t) dx
\]
\[
=- \sj  \io \bjnxt \partial_{j}\un(x,t) \lap \un (x,t) dx -\io c^{n}(x,t) \un(x,t) \lap \un (x,t) dx
\]
\[
= -\io \fjnxt \lap \un(x,t) dx.
\]
Applying proposition~\ref{boudary} from the appendix and the Young inequality,
we obtain
\[
\io \da \nabla \un (x,t)\nabla  \un (x,t)dx + \frac{\lambda}{16} \nolk{\nabla^{2}
\unct }
\]
\[
\leq C_{0,n} \nolk{\nabla\unct }+ \frac{4}{\lambda} \nolk{ \fjn \ct} +
\frac{4}{\lambda} \| b^{n} \nabla\un \|^{2}_{\ld}
+ \frac{4}{\lambda} \|  c^{n} \un \|^{2}_{\ld},
\]
where $C_{0,n}$ depends only on the regularity of $\partial \Omega$ and
$\kappa^{n}(t)\equiv \max_{i,j}\| \nabla \anij \ct \|_{L^{\infty}(\Omega)}$.

For any $n\in \N$ and $t\in (0,T)$ we have  $\kappa^{n}(t)\leq \max_{i,j}\|
\nabla \aij \|_{L^{\infty}(\OT)}$ and $C_{0,n}$ are uniformly estimated by
some $C_{0}$, which depends only on  $\max_{i,j}\| \nabla \aij  \|
_{ L^{\infty}(\Omega^{T})}$ and the regularity of $\partial \Omega$.

Similarly as in the proof of lemma~\ref{estifirst}, using the Sobolev
embedding, we obtain
\[
\| b^{n} \nabla\un \|^{2}_{\ld}+  \| c^{n} \un \|^{2}_{\ld}
\]
\[
\leq  \left( \| b^{n} (\cdot , t)\|_{L^{\frac{2p_{1}}{p_{1}-2} }(\Omega) }^{2}
+\| c^{n}(\cdot, t )\|^{2}_{L^{\frac{p_{2}}{p_{2}-2} }(\Omega) } \right)
\left[ \ep \| \nabla^{2}\un (\cdot, t )\|^{2}_{L^{2}(\Omega)} +C \| \un (\cdot, t)
\|_{L^{2}(\Omega)}^{2} \right],
\]
where $C$ depends only on $\ep, p_{1}$ and $ p_{2}$.
Taking $\ep$ small enough, we have
\[
\io \da \nabla \un (x,t)\nabla  \un (x,t)dx + \frac{\lambda}{32} \nolk{\nabla^{2}
\unct }
\]
\[
\leq \overline{C}_{0} \nolk{\nabla\unct }+ \frac{4}{\lambda} \nolk{ \fjn \ct},
\]
where $\overline{C}_{0}$ depends only on  $\max_{i,j}\|
\nabla \aij  \|_{ L^{\infty}(\Omega^{T})}$, the regularity of
$\partial \Omega$, $p_{1}$, $p_{2}$, $\lambda$ and norms $ \| b \|_{L^{\infty}
(0,T; L^{\frac{2p_{1}}{p_{1}-2}}(\Omega))}$,
$\| c \|_{L^{\infty}(0,T; L^{\frac{p_{2}}{p_{2}-2}}(\Omega))}$.

By corollary~\ref{coroone}, the function $\nabla \un $ satisfies
the assumption of lemma~\ref{Ta} and from (\ref{d})  we obtain
\[
\jd \da \| \nabla \una{t} \|_{L^{2}(\Omega)}^{2}
+ \frac{\alpha}{2\gja} \int_{0}^{t}  (t-\tau )^{-\alpha-1}
\nolk{\nabla \una{t} -\nabla \una{\tau}  } \dt
\]
\[
+\frac{1}{2\gja}  t^{-\alpha} \nolk{ \nabla \una{t} - \nabla \un_{0}(\cdot )}
+ \frac{\lambda}{32}  \nolk{ \nabla^{2}\una{t}}
\]
\eqq{
\leq  \overline{C}_{0} \nolk{\nabla\unct }+ \frac{4}{\lambda} \nolk{ \fjn \ct}.
}{a99}
We integrate both sides of (\ref{a99}) with respect to $t\in (0,T)$ and
use the identity $I=I^{1-\alpha}\ia$ and $\nabla \un (x, \cdot )\in AC[0,T]$
for each $x\in \Omega$.  We estimate the second term on the right-hand side
as in the proof of lemma~\ref{estifirst} and after applying
propositions~\ref{propone} \kr{ and \ref{estiapprox}} we have (\ref{a88}).
\end{proof}

\begin{proof}[Proof of theorem~\ref{mainresult}]
Under the   assumptions of  theorem  the existence of a weak solution $u$
is guaranteed by theorem~\ref{mainweak}. Therefore we have to obtain
the additional estimates.  By (\ref{a8}), (\ref{a88}) and the weak
compactness argument, we obtain the bound (\ref{b4}).
Reasoning similarly as in the proof of (\ref{a15}), we obtain
\eqq{\left\| \frac{d}{d t } \ija [\un -u^{n}_{0}] \right\|_{L^{2}(0,T;\ld)}
\leq C(\| u_{0}\|_{\hjzo }+ \| f \|_{L^{2}(0,T;\ld)} ).}{a151}
Hence there exist $w\in \ldT$ and a subsequence, denoted again by $\un$,
such that $\frac{d}{d t } \ija [\un -u^{n}_{0}] \rightharpoonup w$ in $\ldT$.
As in the proof of theorem~\ref{mainweak}, we see that
$\frac{d}{d t } \ija [u -u_{0}]=w$, where the time derivative is
understood in the weak sense.

Finally, from proposition~\ref{initialsecond} we obtain the the continuity of
$u$ with the values in $\ld$, provided  $\alpha >\frac{1}{2}$.
\end{proof}

\section{Appendix}

In this section we collect useful propositions for the proofs.
The basic equality  for the fractional integral is $I^{a}I^{b}f=I^{a+b}f$ and
holds for $f\in L^{1}(0,T)$, where   $a,b$ are positive numbers
(see theorem 2.5 in \cite{Samko}). We also have (see equalities (2.4.33) and
(2.4.44) in \cite{Samko})

\begin{prop}
If $f\in AC[0,T]$ and $\alpha\in (0,1]$, then $\ia \da f(t)=f(t)-f(0)$ and
$\da \ia f(t)=f(t)$.
\label{propone}
\end{prop}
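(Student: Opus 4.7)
The plan is to reduce both identities to the semigroup property $I^{a}I^{b}=I^{a+b}$ on $L^{1}(0,T)$ and to the fundamental theorem of calculus for absolutely continuous functions, via the intermediate identity
\[
D^{\alpha} f(t) = I^{1-\alpha}f'(t) \quad \text{for } f\in AC[0,T].
\]

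First I would establish this intermediate identity. Since $f\in AC[0,T]$, we have $f'\in L^{1}(0,T)$ and $f(\tau)-f(0)=\int_{0}^{\tau}f'(s)\,ds$. Inserting this representation into the definition \eqref{fC} and applying Fubini's theorem to the resulting iterated integral gives
\[
\jgja\izt\ta[f(\tau)-f(0)]\dt=\frac{1}{\Gamma(2-\alpha)}\izt(t-s)^{1-\alpha}f'(s)\,ds.
\]
Then I would differentiate in $t$ (moving the derivative inside the integral is legitimate because the resulting kernel $(t-s)^{-\alpha}$ is in $L^{1}_{\mathrm{loc}}$ and $f'\in L^{1}$) to obtain $D^{\alpha}f(t)=I^{1-\alpha}f'(t)$.

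Granted this, the first identity follows immediately: by the semigroup property,
\[
\ia D^{\alpha}f(t)=\ia I^{1-\alpha}f'(t)=If'(t)=\izt f'(s)\,ds=f(t)-f(0).
\]
For the second identity, set $g=\ia f$. Since $f\in AC[0,T]\subset L^{1}(0,T)$, one checks from the definition of $\ia$ that $g(0)=0$, so $D^{\alpha}g=\partial^{\alpha}g$. Then
\[
D^{\alpha}\ia f(t)=\partial^{\alpha}\ia f(t)=\ddt I^{1-\alpha}I^{\alpha}f(t)=\ddt If(t)=f(t),
\]
again by the semigroup property, together with the fact that $If$ is absolutely continuous with derivative $f$ a.e.

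The only genuinely delicate step is the justification of differentiating under the integral in the proof of $D^{\alpha}f=I^{1-\alpha}f'$; everything else is formal manipulation with $I^{\alpha}$. This can be handled either by a direct limiting argument using dominated convergence or by noting that $\int_{0}^{t}(t-s)^{1-\alpha}f'(s)\,ds$ is absolutely continuous in $t$ with a.e.\ derivative equal to $\int_{0}^{t}(t-s)^{-\alpha}f'(s)\,ds\cdot(1-\alpha)$ (times the appropriate Gamma factor), which is exactly what is needed.
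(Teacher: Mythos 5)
Your proof is correct. Note, however, that the paper does not actually prove Proposition~1 at all: it simply cites equalities (2.4.33) and (2.4.44) in Samko--Kilbas--Marichev, so there is no in-paper argument to compare against. What you have written is the standard self-contained derivation that underlies those cited formulas: reduce everything to the intermediate identity $D^{\alpha}f=I^{1-\alpha}f'$ (obtained by Fubini from $f(\tau)-f(0)=\int_{0}^{\tau}f'(s)\,ds$, which gives $I^{1-\alpha}[f-f(0)]=I^{2-\alpha}f'=I\bigl(I^{1-\alpha}f'\bigr)$, an absolutely continuous function whose a.e.\ derivative is $I^{1-\alpha}f'$), and then invoke the semigroup law $I^{a}I^{b}=I^{a+b}$ on $L^{1}$. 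Your handling of the one delicate point --- justifying the differentiation by recognizing $I^{2-\alpha}f'$ as the primitive of an $L^{1}$ function rather than by differentiating under the integral sign --- is exactly right. Two minor remarks: both identities hold only almost everywhere in $t$ (since $\frac{d}{dt}If=f$ a.e.\ for $f\in L^{1}$), which is all the paper needs; and for the endpoint $\alpha=1$ included in the statement, $I^{1-\alpha}$ degenerates to the identity and both claims reduce directly to the fundamental theorem of calculus for absolutely continuous functions, so your argument covers that case as well.
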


By direct calculation we have
\begin{prop}
If $f\in C[0,T]$ and $\alpha\in (0,1)$, then $\ia f \in C[0,T]$.
\label{proptwo}
\end{prop}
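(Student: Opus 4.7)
The plan is to reduce the continuity question to a dominated convergence argument by freezing the domain of integration via a rescaling. Specifically, for $t>0$, the change of variable $\tau = ts$ gives
\[
I^{\alpha} f(t) = \frac{1}{\Gamma(\alpha)} \int_{0}^{t} (t-\tau)^{\alpha-1} f(\tau)\, d\tau = \frac{t^{\alpha}}{\Gamma(\alpha)} \int_{0}^{1} (1-s)^{\alpha-1} f(ts)\, ds,
\]
where now the kernel $(1-s)^{\alpha-1}$ no longer depends on $t$ and lies in $L^{1}(0,1)$ because $\alpha\in(0,1)$.

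Next I would handle the two cases separately. For $t_{0}\in(0,T]$, consider a sequence $t_{n}\to t_{0}$ with $t_{n}\in(0,T]$. Since $f$ is continuous on $[0,T]$, for each $s\in(0,1)$ we have $f(t_{n}s)\to f(t_{0}s)$, and the integrand is dominated by $\|f\|_{C[0,T]}(1-s)^{\alpha-1}$, which is integrable. Lebesgue dominated convergence combined with $t_{n}^{\alpha}\to t_{0}^{\alpha}$ yields $I^{\alpha}f(t_{n})\to I^{\alpha}f(t_{0})$. For the endpoint $t_{0}=0$, the direct estimate
\[
|I^{\alpha}f(t)| \le \frac{t^{\alpha}}{\Gamma(\alpha)} \|f\|_{C[0,T]} \int_{0}^{1} (1-s)^{\alpha-1}\, ds = \frac{t^{\alpha}}{\Gamma(\alpha+1)}\|f\|_{C[0,T]}
\]
shows $I^{\alpha}f(t)\to 0 = I^{\alpha}f(0)$ as $t\to 0^{+}$, which gives continuity at $0$ together with the natural value $I^{\alpha}f(0)=0$.

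There is no real obstacle here: the only mild subtlety is that the original kernel $(t-\tau)^{\alpha-1}$ depends on $t$ and blows up as $\tau\to t^{-}$, so a naive application of dominated convergence on the $\tau$-variable requires a uniform majorant. The rescaling $\tau=ts$ resolves this by absorbing the $t$-dependence into the prefactor $t^{\alpha}$. Alternatively, one could split the integral into $\int_{0}^{t_{0}}+\int_{t_{0}}^{t}$, use the monotonicity $(t-\tau)^{\alpha-1}\le(t_{0}-\tau)^{\alpha-1}$ for $\tau<t_{0}<t$ and $\alpha<1$ to bound the first piece via dominated convergence, and bound the second piece by $\|f\|_{C[0,T]}(t-t_{0})^{\alpha}/\alpha$, but the rescaling argument is cleaner.
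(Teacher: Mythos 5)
Your proof is correct: the substitution $\tau=ts$ turns the $t$-dependent singular kernel into the fixed integrable weight $(1-s)^{\alpha-1}$, after which dominated convergence and the bound $|I^{\alpha}f(t)|\leq t^{\alpha}\|f\|_{C[0,T]}/\Gamma(\alpha+1)$ at $t=0$ give continuity on all of $[0,T]$. The paper states this proposition with only the remark ``by direct calculation,'' and your argument (as well as the splitting alternative you sketch) is exactly the standard calculation being alluded to, so there is nothing to add.
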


\begin{prop}[lemma A.1 in \cite{KRR}]
If $f\in AC[0,T]$ and $\alpha\in (0,1)$, then $\ia f \in AC[0,T]$ and
$(\ia f)'(t)= \ia f'(t)+\frac{t^{\alpha-1}}{\ga}f(0)$.

\label{propthree}
\end{prop}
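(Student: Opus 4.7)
The plan is to exploit the absolute continuity of $f$ to rewrite $\ia f$ as the sum of a power function (carrying the initial value) and an iterated fractional integral of $f'$, which is absolutely continuous by Fubini and the classical fundamental theorem of calculus.

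First I would use that $f\in AC[0,T]$ means $f(\tau)=f(0)+\int_{0}^{\tau}f'(s)\,ds$ with $f'\in L^{1}(0,T)$. Plugging this into the definition (\ref{fI}) gives
\[
\ia f(t)=\frac{f(0)}{\ga}\int_{0}^{t}(t-\tau)^{\alpha-1}\,d\tau+\frac{1}{\ga}\int_{0}^{t}(t-\tau)^{\alpha-1}\int_{0}^{\tau}f'(s)\,ds\,d\tau.
\]
The first integral evaluates to $\frac{f(0)t^{\alpha}}{\Gamma(\alpha+1)}$. For the second, since $|f'|\in L^{1}(0,T)$ and $(t-\tau)^{\alpha-1}$ is integrable in $\tau$, Tonelli's theorem justifies swapping the order of integration, yielding $\int_{0}^{t}f'(s)\frac{(t-s)^{\alpha}}{\alpha\Gamma(\alpha)}\,ds=I^{\alpha+1}f'(t)$. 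Thus
\[
\ia f(t)=\frac{f(0)t^{\alpha}}{\Gamma(\alpha+1)}+I^{\alpha+1}f'(t).
\]

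Next I would establish absolute continuity. By the semigroup property $I^{\alpha+1}=I\circ I^{\alpha}$ recalled at the start of the appendix, and the fact that convolution of $f'\in L^{1}(0,T)$ with the $L^{1}$-kernel $\tau^{\alpha-1}/\ga$ is in $L^{1}(0,T)$ (Young's inequality), we see $I^{\alpha}f'\in L^{1}(0,T)$; hence $I^{\alpha+1}f'(t)=\int_{0}^{t}(I^{\alpha}f')(s)\,ds$ is absolutely continuous on $[0,T]$ with weak derivative $I^{\alpha}f'$. The power function $\frac{f(0)t^{\alpha}}{\Gamma(\alpha+1)}$ is also AC on $[0,T]$ (in fact $C^{1}$ on $(0,T]$ with an integrable derivative $\frac{f(0)t^{\alpha-1}}{\ga}$ near $0$). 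Summing these two contributions gives $\ia f\in AC[0,T]$ together with the claimed formula
\[
(\ia f)'(t)=\frac{t^{\alpha-1}}{\ga}f(0)+\ia f'(t)\quad\text{a.e. } t\in(0,T).
\]

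The only subtlety is justifying the interchange of integrals and the differentiation of $I^{\alpha+1}f'$; both rest on the $L^{1}$-boundedness of the Riemann--Liouville kernel combined with the fact that $f'\in L^{1}(0,T)$ by the definition of absolute continuity. No new estimate is needed beyond Young's convolution inequality and the basic FTC, so this is the routine step. I would present the proof in the above three moves: representation, integrability of $I^{\alpha}f'$, and differentiation term by term.
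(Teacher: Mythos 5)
Your argument is correct and complete: the decomposition $\ia f(t)=\frac{f(0)t^{\alpha}}{\Gamma(\alpha+1)}+I^{\alpha+1}f'(t)$ via Tonelli/Fubini, followed by the observation that $I^{\alpha+1}f'=I(I^{\alpha}f')$ is an indefinite integral of the $L^{1}$ function $I^{\alpha}f'$ while the power term is absolutely continuous with integrable derivative $\frac{t^{\alpha-1}}{\ga}f(0)$, is exactly the standard proof of this identity. The paper itself offers no proof, simply citing lemma~A.1 of \cite{KRR}, and your write-up reproduces the canonical argument behind that cited lemma, so nothing is missing.
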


\begin{prop}[lemma A.4 in \cite{KRR}]
Assume that $\alpha \in (0,1)$, $f\in AC[0,T]$ and $t^{1-\alpha}f'\in
L^{\infty}(0,T)$. Then
\eqq{|t^{1-\alpha}_{2} (\ia f')(t_{2})-t^{1-\alpha}_{1} (\ia f')(t_{1})|
\leq C_{0} \| t^{1-\alpha} f' \|_{L^{\infty}(0,T)} |t_{2}-t_{1}|^{\alpha},}
{dode}
where $C_{0} $ depends only on $\alpha$.
In particular, $t\mapsto t^{1-\alpha} (\ia f')(t) \in C^{0,\alpha}[0,T]$
and $D^{1-\alpha} f\in C^{0,\alpha}(0,T]$.

\no
\label{propfour}
\end{prop}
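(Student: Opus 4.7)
The plan is to first establish a pointwise bound on $g(t) := t^{1-\alpha}(I^{\alpha} f')(t)$ via rescaling, and then upgrade to the H\"older estimate through a dichotomy in the size of $t_{1}$ versus $h := t_{2} - t_{1}$. Setting $M := \| t^{1-\alpha} f' \|_{L^{\infty}(0,T)}$, we have $|f'(s)| \leq M s^{\alpha-1}$ for a.e.\ $s$. The substitution $\tau = tu$ yields
$$g(t) = \frac{t}{\Gamma(\alpha)} \int_{0}^{1} (1-u)^{\alpha-1} f'(tu)\, du,$$
and combined with the pointwise bound on $f'$ and the Beta integral this gives $|g(t)| \leq C_{1}(\alpha) M t^{\alpha}$.

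For the H\"older estimate I would distinguish two regimes. In the ``close-to-origin'' regime $t_{1} \leq h$ (so $t_{2} \leq 2h$), the pointwise bound and the triangle inequality directly produce $|g(t_{2}) - g(t_{1})| \leq C_{1}(\alpha) M (t_{1}^{\alpha} + t_{2}^{\alpha}) \leq 2^{1+\alpha} C_{1}(\alpha) M h^{\alpha}$. In the ``interior'' regime $h \leq t_{1}$ (so $t_{1} \leq t_{2} \leq 2 t_{1}$), I would split
$$\Gamma(\alpha)[g(t_{2}) - g(t_{1})] = \int_{t_{1}}^{t_{2}} t_{2}^{1-\alpha}(t_{2} - \tau)^{\alpha-1} f'(\tau)\, d\tau + \int_{0}^{t_{1}} [\phi_{t_{2}}(\tau) - \phi_{t_{1}}(\tau)] f'(\tau)\, d\tau,$$
with $\phi_{t}(\tau) := t^{1-\alpha}(t-\tau)^{\alpha-1}$. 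The first piece is controlled by the monotonicity $\tau^{\alpha-1} \leq t_{1}^{\alpha-1}$ on $[t_{1},t_{2}]$ together with $t_{2}^{1-\alpha} t_{1}^{\alpha-1} \leq 2^{1-\alpha}$, yielding $O(M h^{\alpha})$. For the second piece I would use the identity $\partial_{t} \phi_{t}(\tau) = -(1-\alpha)\tau\, t^{-\alpha}(t-\tau)^{\alpha-2}$ to write the kernel difference as an integral in $t$, apply $|\tau f'(\tau)| \leq M \tau^{\alpha}$, swap the order of integration via Fubini, and evaluate the resulting $\tau$-integral in closed form. The subadditivity $(a+b)^{\alpha} \leq a^{\alpha} + b^{\alpha}$ then compares $h^{\alpha}$ with $t_{2}^{\alpha} - t_{1}^{\alpha}$ to close the bound at $O(M h^{\alpha})$. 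I expect this Fubini step to be the main obstacle: the double integrand is singular both at $\tau = 0$ (through $\tau^{\alpha-1}$) and at $\tau = t$ (through $(t-\tau)^{\alpha-2}$, non-integrable in $\tau$ alone), so one must carefully order the integrations and exploit the extra factor $\tau$ arising from $\partial_{t} \phi_{t}$ to obtain a finite estimate.

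The ``in particular'' statements then follow easily. Uniform H\"older continuity of $g$ on $[0,T]$ is immediate from the just-proved estimate applied to arbitrary $t_{1}, t_{2} \in [0,T]$. For $D^{1-\alpha} f \in C^{0,\alpha}(0,T]$, I would invoke proposition~\ref{propthree} to write
$$D^{1-\alpha} f(t) = (I^{\alpha} f)'(t) = I^{\alpha} f'(t) + \frac{t^{\alpha-1}}{\Gamma(\alpha)} f(0) = t^{\alpha-1} g(t) + \frac{t^{\alpha-1}}{\Gamma(\alpha)} f(0),$$
and note that both summands are products of the factor $t^{\alpha-1}$, which is smooth on any closed subinterval of $(0,T]$, with H\"older-continuous (or constant) data, so the combination is locally H\"older of exponent $\alpha$ on $(0,T]$.
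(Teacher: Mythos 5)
The paper does not actually prove this proposition: it is imported verbatim as Lemma A.4 of \cite{KRR} (with only a correction of the statement), so there is no internal argument to compare against, and your proof must stand on its own. It does. The pointwise bound $|g(t)|\leq C_{1}(\alpha)Mt^{\alpha}$ via $\tau=tu$ and the Beta integral is correct; the dichotomy $t_{1}\leq h$ versus $t_{1}\geq h$ is the right way to organize the H\"older estimate; and both pieces in the interior regime close as you describe. For the near-diagonal piece, $\tau^{\alpha-1}\leq t_{1}^{\alpha-1}$ and $(t_{2}/t_{1})^{1-\alpha}\leq 2^{1-\alpha}$ give a bound $2^{1-\alpha}\alpha^{-1}Mh^{\alpha}$. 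For the kernel-difference piece, your identity $\partial_{t}\phi_{t}(\tau)=-(1-\alpha)\tau t^{-\alpha}(t-\tau)^{\alpha-2}$ is correct, and after inserting $|\tau f'(\tau)|\leq M\tau^{\alpha}\leq Mt^{\alpha}$ the inner $\tau$-integral is $\tfrac{1}{1-\alpha}\left[(t-t_{1})^{\alpha-1}-t^{\alpha-1}\right]$, whence the whole term is $\tfrac{M}{\alpha}\left[h^{\alpha}-(t_{2}^{\alpha}-t_{1}^{\alpha})\right]\leq \alpha^{-1}Mh^{\alpha}$. The Fubini step you flag as the main obstacle is in fact harmless: the double integrand is nonnegative, so Tonelli applies, and either iterated integral is finite (for fixed $\tau<t_{1}$ the $t$-integral of $(t-\tau)^{\alpha-2}$ over $[t_{1},t_{2}]$ equals $\tfrac{1}{1-\alpha}\left[(t_{1}-\tau)^{\alpha-1}-(t_{2}-\tau)^{\alpha-1}\right]$, which is integrable in $\tau$ up to $t_{1}$). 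One small notational slip in the last step: in this paper $D^{1-\alpha}$ denotes the Caputo derivative, so $D^{1-\alpha}f=I^{\alpha}f'=t^{\alpha-1}g(t)$ with no $\tfrac{t^{\alpha-1}}{\Gamma(\alpha)}f(0)$ term; the formula you wrote is the Riemann--Liouville derivative $\partial^{1-\alpha}f$. The conclusion is unaffected, since the extra term is itself smooth on every $[\delta,T]$, but the identification should be stated with the paper's convention.
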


In the formulation of lemma A.4 in \cite{KRR} it should be $D^{1-\alpha} f\in C^{0,\alpha}(0,T]$ instead of $D^{\alpha} f\in C^{0,1-\alpha}(0,T]$.

\no
\begin{prop}
Assume that $\alpha, \beta \in (0,1)$, $\alpha+\beta\leq 1$,  $f\in AC[0,T]$.
Then, for the Caputo derivative $D^{\beta}$ and the Riemann-Liouville
derivative $\partial^{\beta}$, defined by (\ref{fC}) and (\ref{fRL})
respectively, the equality $ \partial^{\beta} \da  f(t)=D^{\alpha+\beta}f(t)$
holds.
\label{propfive}
\end{prop}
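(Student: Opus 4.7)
The plan is to reduce everything to the standard identity $D^{\alpha}f=I^{1-\alpha}f'$ valid for absolutely continuous $f$, and then exploit the semigroup property of fractional integrals. So first I would establish that for $f\in AC[0,T]$ one has
\[
D^{\alpha}f(t)=I^{1-\alpha}f'(t).
\]
To see this, I would start from the definition $D^{\alpha}f(t)=\frac{1}{\Gamma(1-\alpha)}\frac{d}{dt}\int_{0}^{t}(t-\tau)^{-\alpha}[f(\tau)-f(0)]\,d\tau$, integrate by parts (the boundary terms vanish because $f(0)-f(0)=0$ at $\tau=0$ and the factor $(t-\tau)^{1-\alpha}$ kills the endpoint at $\tau=t$), and obtain $D^{\alpha}f(t)=\frac{d}{dt}I^{2-\alpha}f'(t)$. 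Then write $I^{2-\alpha}=I\cdot I^{1-\alpha}$ by the semigroup property (theorem 2.5 in \cite{Samko}); since $f'\in L^{1}(0,T)$, the function $I^{1-\alpha}f'$ lies in $L^{1}(0,T)$, so $I\cdot I^{1-\alpha}f'$ is absolutely continuous with derivative $I^{1-\alpha}f'$. This yields the representation. The same calculation with $\alpha$ replaced by $\alpha+\beta$ (in the case $\alpha+\beta<1$) gives $D^{\alpha+\beta}f=I^{1-\alpha-\beta}f'$, and in the borderline case $\alpha+\beta=1$ one interprets $I^{0}=\mathrm{Id}$ so that both formulas agree with $f'$.

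Once this representation is in hand, the computation of $\partial^{\beta}D^{\alpha}f$ is direct:
\[
\partial^{\beta}D^{\alpha}f(t)=\frac{d}{dt}I^{1-\beta}\bigl(I^{1-\alpha}f'\bigr)(t)=\frac{d}{dt}I^{2-\alpha-\beta}f'(t),
\]
using the semigroup identity $I^{1-\beta}I^{1-\alpha}=I^{2-\alpha-\beta}$, which applies because $f'\in L^{1}(0,T)$. Since $\alpha+\beta\leq 1$ we have $2-\alpha-\beta\geq 1$, so $I^{2-\alpha-\beta}=I\cdot I^{1-\alpha-\beta}$, and the same absolute-continuity argument as above yields
\[
\partial^{\beta}D^{\alpha}f(t)=I^{1-\alpha-\beta}f'(t)=D^{\alpha+\beta}f(t),
\]
which is the claim.

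The only point that needs a little care is the legitimacy of each commutation of $\tfrac{d}{dt}$ with the fractional integrals; all of these are justified by the fact that $f'\in L^{1}(0,T)$ and that fractional integration maps $L^{1}(0,T)$ into itself, so that each $I^{\gamma}f'$ with $\gamma\geq 0$ is locally integrable and $I\cdot I^{\gamma}f'$ is absolutely continuous with a.e.\ derivative equal to $I^{\gamma}f'$. The borderline case $\alpha+\beta=1$ is handled by the same formulas once one adopts the convention $I^{0}=\mathrm{Id}$, and this is the subtlest point to write out carefully, but no genuine difficulty arises.
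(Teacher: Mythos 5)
Your argument is correct and is essentially the paper's proof: both reduce the claim to the semigroup identity $I^{1-\beta}I^{1-\alpha}=I^{2-\alpha-\beta}$ together with the commutation of $\frac{d}{dt}$ with fractional integration for an absolutely continuous function vanishing at $t=0$ (proposition~\ref{propthree}), the only cosmetic difference being that you first rewrite $\da f=\ija f'$ while the paper carries $\ija[f-f(0)]$ through the computation and differentiates at the end.
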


\begin{proof}
We can write
\[
\partial^{\beta} \da  f(t)= \ddt I^{1-\beta} \da f (t) =\ddt I^{1-\beta} \ddt
\ija [f-f(0)].
\]
Applying proposition~\ref{propthree}, we have
\[
\frac{d^{2}}{dt^{2}} I^{1-\beta} \ija [f-f(0)]=\ddt I^{1-\alpha-\beta} [f-f(0)]
= D^{\alpha+\beta}f(t).
\]
\end{proof}

\begin{prop}[theorem~1 in \cite{Gronwall}]
Assume that $\alpha>0$, $T\in (0,\infty]$, $a,w\in L^{1}_{loc}[0,T)$, $a,g,w$
are nonnegative and $g$ is nondecreasing and bounded. If $w$ satisfies
inequality
\eqq{w(t)\leq a(t)+g(t)(\ia w)(t) \m{ for } t\in [0,T),}{d3}
then
\[
w(t)\leq \sumi g^{k}(t)(I^{\alpha k }a)(t) \m{ for } t\in [0,T).
\]
\label{gronwall}
\end{prop}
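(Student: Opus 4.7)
The plan is to iterate the hypothesis $w(t)\leq a(t)+g(t)(\ia w)(t)$. Substituting the bound on $w$ into the right-hand side of itself yields, after one round,
\[
w(t) \leq a(t) + g(t)(\ia a)(t) + g(t)\,\ia(g\cdot \ia w)(t).
\]
The key observation is that since $g$ is nondecreasing, for $\tau\leq t$ we have $g(\tau)\leq g(t)$, hence
\[
\ia(g\cdot \ia w)(t) = \frac{1}{\Gamma(\alpha)}\int_{0}^{t}(t-\tau)^{\alpha-1}g(\tau)(\ia w)(\tau)\,d\tau \leq g(t)\,(I^{2\alpha}w)(t),
\]
using the semigroup identity $\ia\ia = I^{2\alpha}$ (valid for $w\in L^{1}_{loc}$, by theorem 2.5 of \cite{Samko}). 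Proceeding by induction on $n$, I would establish
\[
w(t) \leq \sum_{k=0}^{n-1} g^{k}(t)(I^{k\alpha}a)(t) + g^{n}(t)(I^{n\alpha}w)(t),
\]
in which every term is nonnegative by the hypotheses on $a,g,w$.

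The second step is to pass to the limit $n\to\infty$ by showing the remainder $R_{n}(t):=g^{n}(t)(I^{n\alpha}w)(t)$ tends to zero pointwise on $[0,T)$. Fix $t\in[0,T)$, denote $M:=\sup_{s\in[0,t]}g(s)<\infty$, and use the crude estimate $(t-\tau)^{n\alpha-1}\leq t^{n\alpha-1}$ (valid once $n\alpha\geq 1$) to obtain
\[
0\leq R_{n}(t) \leq \frac{M^{n}\,t^{n\alpha-1}}{\Gamma(n\alpha)}\int_{0}^{t}w(\tau)\,d\tau.
\]
By Stirling's formula $\Gamma(n\alpha)$ grows faster than any exponential in $n$, so $M^{n}t^{n\alpha-1}/\Gamma(n\alpha)\to 0$, and the local integrability of $w$ takes care of the integral. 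Since the partial sums $\sum_{k=0}^{n-1} g^{k}(t)(I^{k\alpha}a)(t)$ are nondecreasing in $n$ (nonnegative summands), they converge in $[0,\infty]$ to $\sumi g^{k}(t)(I^{k\alpha}a)(t)$, and the desired inequality follows.

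The main obstacle is the monotonicity trick used to pull $g(t)$ outside the iterated fractional integral: without the assumption that $g$ is nondecreasing, the iteration would generate nested kernels of the form $g(\tau_{1})\cdots g(\tau_{n})(t-\tau_{1})^{\alpha-1}(\tau_{1}-\tau_{2})^{\alpha-1}\cdots$, which would have to be controlled by an explicit beta-function convolution computation (the classical Henry route) and would produce a different, less transparent series. Here the monotonicity of $g$ collapses that structure into the product $g^{n}(t)\cdot(I^{n\alpha}w)(t)$, which is what yields the clean Mittag-Leffler-type bound. A secondary technical point is convergence of the remainder when $a$ is only in $L^{1}_{loc}$ rather than bounded, but the gamma-function decay absorbs this with room to spare.
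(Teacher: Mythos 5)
Your proof is correct and follows essentially the same route as the paper's: iterating the inequality, using the monotonicity of $g$ together with the semigroup property of $\ia$ to obtain $w(t)\leq \sum_{k=0}^{n-1} g^{k}(t)(I^{\alpha k}a)(t)+g^{n}(t)(I^{\alpha n}w)(t)$, and killing the remainder via the growth of $\Gamma(n\alpha)$. You simply spell out the monotonicity trick and the Stirling estimate that the paper's one-line proof leaves implicit.
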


For convenience of readers, we recall a simple proof from \cite{Gronwall}.
\begin{proof}
If we apply the operator $(g(t)\ia)^{n}$ to both sides of (\ref{d3}) and
using the  properties of $g$ we deduce that
\[
w(t)\leq \sum_{k=0}^{n-1} g^{k}(t)I^{\alpha k }a(t)+g^{n}(t)I^{\alpha n }w(t).
\]
The last term uniformly tends to $0$, when $n \rightarrow \infty$.
\end{proof}

In the reference to  the remark on p. 8 of \cite{Zacher},
we obtain the following result.

\begin{prop}
Assume that $X$ is a normed vector space,
$u \in L^{1}(0,T;X)$, $p\in (1, \infty)$ and $\ddt( \ija [u-u_{0}])\in
L^{p}(0,T;X)$ and $\ija [u-u_{0}](0)=0$. If $\alpha\in (\frac{1}{p},1]$,
then $u \in C([0,T];X)$ and $u(0)=u_{0}$.
\label{initialsecond}
\end{prop}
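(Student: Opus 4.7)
The plan is to invert the fractional integration and rewrite $u-u_{0}$ explicitly as $I^{\alpha}$ of an $L^{p}(0,T;X)$ function; then invoke the elementary embedding $I^{\alpha}\colon L^{p}(0,T;X)\hookrightarrow C([0,T];X)$, which holds precisely when $\alpha>\frac{1}{p}$.

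First I would set $v(t)=I^{1-\alpha}[u-u_{0}](t)$. By hypothesis $v\in L^{1}(0,T;X)$, $v'\in L^{p}(0,T;X)$ and $v(0)=0$, so $v$ is absolutely continuous with $v(t)=\int_{0}^{t}v'(s)\,ds$. Applying $I^{\alpha}$ to both sides of the identity defining $v$ and using the semigroup law $I^{\alpha}I^{1-\alpha}=I$ on $L^{1}$ (Theorem~2.5 in \cite{Samko}) yields $I^{\alpha}v(t)=\int_{0}^{t}(u(s)-u_{0})\,ds$. Then the vector-valued analogue of proposition~\ref{propthree}, combined with $v(0)=0$, gives $(I^{\alpha}v)'(t)=I^{\alpha}v'(t)+\frac{t^{\alpha-1}}{\Gamma(\alpha)}v(0)=I^{\alpha}v'(t)$. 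Differentiating the previous identity therefore produces
\[
u(t)-u_{0}=I^{\alpha}v'(t)\qquad\text{for a.e. }t\in(0,T).
\]

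Second, I would check that $I^{\alpha}\colon L^{p}(0,T;X)\to C([0,T];X)$ is bounded and that $I^{\alpha}w(0)=0$. The assumption $\alpha>\frac{1}{p}$ is equivalent to $(\alpha-1)p'>-1$, so $(t-\tau)^{\alpha-1}\in L^{p'}(0,t)$; H\"older's inequality yields
\[
\|I^{\alpha}w(t)\|_{X}\leq C_{\alpha,p}\,t^{\alpha-1/p}\,\|w\|_{L^{p}(0,T;X)},
\]
which in particular gives $I^{\alpha}w(0)=0$ once continuity is known. Continuity in $t$ is immediate for $w\in C([0,T];X)$ (by an elementary $\varepsilon/\delta$ argument splitting the integral into $(0,t_{0})$ and $(t_{0},t_{0}+h)$), and then extends to all $w\in L^{p}(0,T;X)$ by density, because the above bound uniformly controls the $L^{\infty}(0,T;X)$-norm and $C([0,T];X)$ is closed under uniform limits.

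Applying this to $w=v'\in L^{p}(0,T;X)$ shows that $u-u_{0}$ admits a representative in $C([0,T];X)$ vanishing at $t=0$, so $u\in C([0,T];X)$ and $u(0)=u_{0}$. The only step that is not fully routine is the vector-valued version of proposition~\ref{propthree}; but this reduces to the scalar identity by Fubini in the double integral defining $(I^{\alpha}v)(t)$, or equivalently by pairing with functionals $x^{\ast}\in X^{\ast}$, and therefore presents no real obstacle.
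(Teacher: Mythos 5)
Your proposal is correct and follows essentially the same route as the paper: both invert the fractional integral via the semigroup law and Proposition~\ref{propthree} to obtain $u-u_{0}=I^{\alpha}\bigl(\tfrac{d}{dt}I^{1-\alpha}[u-u_{0}]\bigr)$, and then exploit the mapping property of $I^{\alpha}$ on $L^{p}$ when $\alpha>\tfrac{1}{p}$. The only cosmetic difference is that you prove the continuity of $I^{\alpha}w$ directly by H\"older and density, while the paper cites Theorem~3.6 of Samko et al.\ for the stronger $L^{p}\to C^{0,\alpha-1/p}$ bound and then runs a short limiting argument with the estimate (\ref{c4}) to identify $u(0)=u_{0}$ --- a conclusion you reach more directly from $I^{\alpha}w(0)=0$.
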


\begin{proof}
We have $\ija [u-u_{0}](t)= \izt \frac{d}{ds} (\ija [u-u_{0}](s))ds$ and
\[
\| \ija [u-u_{0}](t) \|_{X}\leq \izt  \| \frac{d}{ds} (\ija [u-u_{0}])(s)
\|_{X} ds \leq t^{1- \frac{1}{p}} \| \ddt \ija [u-u_{0}] \|_{L^{p}(0,T;X)}
\]
for all $t$, where we used the fact that $\ija [u-u_{0}]$ is
absolutely continuous.
Thus
\eqq{ t^{\alpha-1}\| \ija [u-u_{0}](t) \|_{X}\leq t^{\alpha-\frac{1}{p}} \|
\ddt (\ija [u-u_{0}]) \|_{L^{p}(0,T;X)}.}{c4}
On the other hand we have
\eqq{
\ia \ddt (\ija [u-u_{0}])(t)=u(t)-u_{0},
}{c5}
because $\ija [u-u_{0}]\in AC$ and applying proposition~\ref{propthree},
we have
\[
\ia \ddt (\ija [u-u_{0}])(t)=\ddt \ia  (\ija [u-u_{0}])(t)=\ddt I[u-u_{0}](t)
=u(t)-u_{0}.
\]
From theorem~3.6 in \cite{Samko}, we see that $\ia$ is continuous
from $L^{p}(0,T)$ to $C^{0,\alpha-\frac{1}{p}}[0,T]$ and the left-hand side
of (\ref{c5}) is H\"older continuous, and so
$u\in C^{0,\alpha-\frac{1}{p}}([0,T];X)$.
Therefore $u(0)$ is well-defined.
Using (\ref{c4}) and setting  $C_{\alpha}=\Gamma(2-\alpha)$, we have
\[
\| u(0) - u_{0} \|_{X}= C_{\alpha} t^{\alpha-1} \| \ija [u(0)-u_{0}] (t)
\|_{X}
\]
\[
\leq C_{\alpha} t^{\alpha-1} \| \ija [u-u_{0}] (t)\|_{X}+ C_{\alpha}
t^{\alpha-1} \| \ija [u-u(0)] (t)\|_{X}
\]
\[
\leq C_{\alpha} t^{\alpha- \frac{1}{p}}  \| \ddt (\ija [u-u_{0}])
\|_{L^{p}(0,T;X)} + \esssup_{\tau \in (0,t)} \| u(\tau)-u(0) \|_{X}
\]
\[
\leq C_{\alpha} t^{\alpha- \frac{1}{p}}  \| \ddt (\ija [u-u_{0}]) \|_{L^{p}(0,T;X)} + C_{\alpha,p} t^{\alpha- \frac{1}{p}},
\]
where we used the H\"older continuity of $u$.
As $t\rightarrow 0$, we have $u(0)=u_{0}$.
\end{proof}

\begin{prop}
Assume that $\aij=a_{j,i}$ and $\{ \aij (x,t) \}_{i,n=1}^{N}$ define
a uniformly elliptic operator of second order, i.e.,
there exist $\lambda, \mu>0$ such that
\eqq{\lambda |\xi|^{2}\leq \sij \aij (x,t)\xi_{i}\xi_{j} \leq \mu |\xi|^{2}
\hd \hd \m{ for } \hd  \xi\in \rr^{N},
\hd x\in \Omega, \hd   t\in [0,T], }{a2}
holds. Then for all $i,j,x,t$ we have $|\aij (x,t)|\leq \mu$.
\label{elipc}
\end{prop}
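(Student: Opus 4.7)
The plan is to apply the bilinear bound to a small family of carefully chosen test vectors $\xi\in\rr^{N}$ and read off the size of each entry $\aij(x,t)$ from the resulting scalar inequalities. Fix $x\in\Omega$ and $t\in[0,T]$; the bounds in (\ref{a2}) hold pointwise, so throughout I treat the matrix $A(x,t)=(\aij(x,t))$ as a fixed symmetric matrix satisfying $\lambda|\xi|^{2}\leq \xi^{T}A\xi\leq \mu|\xi|^{2}$.

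First I would handle the diagonal entries. Plugging $\xi=e_{i}$ (the standard basis vector) into (\ref{a2}) gives $\lambda\leq a_{i,i}(x,t)\leq \mu$ since $|e_{i}|^{2}=1$ and $e_{i}^{T}A e_{i}=a_{i,i}$. In particular $|a_{i,i}(x,t)|\leq\mu$, which handles the diagonal.

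For the off-diagonal entries with $i\neq j$, I would use the polarization identity, exploiting symmetry $\aij=a_{j,i}$:
\[
4a_{i,j}(x,t)=(e_{i}+e_{j})^{T}A(e_{i}+e_{j})-(e_{i}-e_{j})^{T}A(e_{i}-e_{j}).
\]
Applying the upper bound in (\ref{a2}) to the first term (with $|e_{i}+e_{j}|^{2}=2$) and the lower bound to the second (with $|e_{i}-e_{j}|^{2}=2$) yields $4a_{i,j}(x,t)\leq 2\mu-2\lambda$. Swapping the roles of the two bounds gives the reverse inequality $4a_{i,j}(x,t)\geq 2\lambda-2\mu$, so that
\[
|a_{i,j}(x,t)|\leq \tfrac{\mu-\lambda}{2}\leq \mu,
\]
since $\lambda>0$. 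Combining the diagonal and off-diagonal cases gives the claim for every $i,j,x,t$.

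There is no real obstacle here; the statement is a pointwise algebraic fact about symmetric positive-definite matrices, and the only thing to be careful about is the use of symmetry in the polarization identity so that the off-diagonal entries appear without cross-term ambiguity.
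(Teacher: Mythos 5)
Your proof is correct and follows essentially the same strategy as the paper: test (\ref{a2}) on coordinate vectors $e_{i}$ for the diagonal entries and on combinations of two coordinate vectors for the off-diagonal ones. The only difference is that you use the full polarization identity with both $e_{i}+e_{j}$ and $e_{i}-e_{j}$, which cancels the diagonal terms exactly and gives the sharper off-diagonal bound $|a_{i,j}|\leq\frac{\mu-\lambda}{2}$, whereas the paper tests only with $e_{i}+e_{j}$ and then subtracts the previously established diagonal bounds to reach $|a_{k,l}|\leq\mu$; both arguments are valid.
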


\begin{proof}
If we take $k\in \{1, \dots,  N\}$ and set  $\xi_{k}=1$ and $\xi_{l}=0$ for
$l\not = k$, then from (\ref{a2}) we see $0< \lambda \leq a_{k,k} \leq \mu$.
If we take $k,l\in \{1, \dots,  N\}$ and set  $\xi_{k}=1$, $\xi_{l}=1$
and $\xi_{m}=0$ for $m\not = k,l$, then we have
$2\lambda\leq 2 a_{k,l}+a_{k,k}+a_{l,l}\leq 2 \mu$. Thus $2a_{k,l}
\leq 2\mu - a_{k,k}-a_{l,l}\leq 2\mu$ and $2a_{k,l}
\geq 2\lambda - a_{k,k}-a_{l,l}\geq 2 \lambda -2\mu\geq -2\mu$.
\end{proof}

\begin{prop}
Assume that $\Omega\subset \rr^{N}$ is a bounded domain with the boundary of
$C^{2}$ class and  there exist $\lambda, \mu>0$ such that
\eqq{\lambda |\xi|^{2}\leq \sij \aij (x,t)\xi_{i}\xi_{j} \leq \mu |\xi|^{2}
\hd \hd \m{ for } \hd  \xi\in \rr^{N}, \hd x\in \Omega, \hd   t\in [0,T],
}{a33}
holds, where $\aij=a_{j,i}$ and $\kappa(t)=\max_{i,j}\| \nabla \aij (\cdot, t)
\|_{L^{\infty}(\Omega)}$. If $u\in H^{3}(\Omega)$ and $u$ and $\lap u $
vanish on $\partial \Omega$, then
\[
\frac{\lambda}{4}\| \nabla^{2} u \|_{L^{2}(\Omega)}^{2}
- C\| \nabla u \|_{L^{2}(\Omega)}^{2} \leq \sij \io \partial_{i}
(\aijxt \partial_{j} u)\lap u dx
\]
where $C$ depends continuously on $\kappa(t)$ and the $C^{2}$-norm
of $\partial \Omega$ \kr{ and $\nabla^{2}u=\{u_{x_{j}x_{i}}\}_{i,j=1}^{N}$}.
\label{boudary}
\end{prop}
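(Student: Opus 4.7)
The plan is to expand $\partial_i(a_{ij}\partial_j u)=a_{ij}u_{ij}+(\partial_i a_{ij})u_j$ so that
\[
\sij \io \partial_i(a_{ij}\partial_j u)\,\lap u\,dx = \sij \io a_{ij} u_{ij}\lap u\,dx + \sij \io (\partial_i a_{ij})\,u_j\,\lap u\,dx.
\]
The second sum is immediate: by Cauchy--Schwarz and Young, it is bounded below by $-\varepsilon\|\nabla^2 u\|_{L^2(\Omega)}^2-C(\varepsilon,\kappa(t))\|\nabla u\|_{L^2(\Omega)}^2$, and both pieces will be absorbed later. Everything nontrivial is in the principal term $\sum_{ijk}\int_\Omega a_{ij}u_{ij}u_{kk}\,dx$.

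For the principal term I would integrate by parts twice, first in $x_k$ and then in $x_i$, using the symmetry $a_{ij}=a_{ji}$ to write the surviving interior contribution as $\sum_{ijk}\int_\Omega a_{ij}u_{ik}u_{jk}\,dx$. Applying the ellipticity condition to the vector $(u_{1k},\ldots,u_{Nk})$ for each fixed $k$ and summing gives
\[
\sum_{ijk}\io a_{ij}u_{ik}u_{jk}\,dx \;\ge\; \lambda\|\nabla^2 u\|_{L^2(\Omega)}^2.
\]
The interior terms in which a derivative hits some $a_{ij}$ are pointwise controlled by $\kappa(t)|\nabla u||\nabla^2 u|$ and hence bounded by $\varepsilon\|\nabla^2 u\|^2+C(\varepsilon,\kappa(t))\|\nabla u\|^2$. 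What remains is the boundary obstruction
\[
I_\partial=\sum_{ijk}\int_{\partial\Omega}\bigl(a_{ij}u_{ij}u_k\nu_k-a_{ij}u_{jk}u_k\nu_i\bigr)\,dS,
\]
and controlling $I_\partial$ is the main difficulty.

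The treatment of $I_\partial$ is where both the hypotheses $u|_{\partial\Omega}=0$ and $\lap u|_{\partial\Omega}=0$ and the $C^2$-regularity of $\partial\Omega$ are essential. From $u|_{\partial\Omega}=0$ the tangential gradient vanishes, so $\nabla u=u_\nu\nu$ and in particular $u_k=u_\nu\nu_k$ on $\partial\Omega$. By differentiating the local graph representation $u(x',\varphi(x'))\equiv 0$ twice, every tangential component of $\nabla^2 u$ on $\partial\Omega$ is seen to equal $u_\nu$ times a combination of second derivatives of $\varphi$, i.e.\ of components of the second fundamental form. The condition $\lap u|_{\partial\Omega}=0$ then expresses the purely normal part $u_{\nu\nu}$ through the tangential Hessian, so that every entry of $\nabla^2 u|_{\partial\Omega}$ is bounded by $C|u_\nu|$ with $C$ depending only on the $C^2$-norm of $\partial\Omega$. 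Substituting gives $|I_\partial|\le C(\mu,\kappa(t),\partial\Omega)\int_{\partial\Omega} u_\nu^2\,dS$, and the classical trace/interpolation inequality
\[
\int_{\partial\Omega} u_\nu^2\,dS\;\le\;\varepsilon\|\nabla^2 u\|_{L^2(\Omega)}^2+C_\varepsilon\|\nabla u\|_{L^2(\Omega)}^2
\]
lets $I_\partial$ be absorbed into $\tfrac{3\lambda}{4}\|\nabla^2 u\|_{L^2(\Omega)}^2$. Choosing every $\varepsilon$ small and collecting constants yields the claimed bound, with $C$ depending continuously on $\kappa(t)$ and on the $C^2$-norm of $\partial\Omega$. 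Everything outside of this boundary calculus is routine Young-inequality bookkeeping; the rigorous boundary identities (most safely derived in a flat chart after a partition-of-unity reduction) are the only step that genuinely requires care.
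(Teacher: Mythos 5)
Your overall strategy (two integrations by parts, ellipticity applied to the columns of the Hessian, absorption of the lower-order and boundary contributions via Young and a trace inequality) is the same as the paper's, and everything up to the boundary term is fine. The gap is in your treatment of $I_{\partial}$: the claim that \emph{every} entry of $\nabla^{2}u$ on $\partial\Omega$ is pointwise bounded by $C|u_{\nu}|$ is false. In coordinates adapted to a boundary point, the purely tangential entries are indeed $-u_{\nu}$ times the second fundamental form (this is (\ref{a6}) in the paper), and the purely normal entry $u_{\nu\nu}$ is then recovered from $\lap u_{|\partial\Omega}=0$; but the mixed entries $\partial_{\tau}\partial_{\nu}u$ (tangential derivatives of the normal derivative) are genuinely new boundary data and admit no pointwise bound by $|u_{\nu}|$. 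These mixed entries do occur in $I_{\partial}$: writing $u_{k}=u_{\nu}\nu_{k}$, your second boundary integrand becomes $u_{\nu}\sum_{i,j}a_{i,j}\nu_{i}\sum_{k}u_{jk}\nu_{k}$, whose tangential-$j$ part is exactly $u_{\nu}\,\partial_{\tau}u_{\nu}$ up to bounded coefficients (and the first integrand $\sum_{i,j}a_{i,j}u_{ij}$ likewise contains mixed entries since $a$ is not diagonal in the adapted frame). So the inequality $|I_{\partial}|\le C\iop u_{\nu}^{2}\,dS$ does not follow from what you wrote.

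The repair is the step the paper's proof is really about: recognize $u_{\nu}\,\partial_{\tau}u_{\nu}=\tfrac{1}{2}\partial_{\tau}\bigl(|u_{\nu}|^{2}\bigr)$ and integrate by parts \emph{along} $\partial\Omega$ (a closed manifold, so no further boundary terms arise), throwing the tangential derivative onto $a_{i,j}$ and onto the entries of the local orthogonal change of frame. This is precisely where $\kappa(t)=\max_{i,j}\|\nabla a_{i,j}\|_{L^{\infty}}$ and the $C^{2}$-regularity of $\partial\Omega$ enter in an essential way; only after this does one land on $\iop K(x,t)\left|\pon\right|^{2}dS$ with $\|K(\cdot,t)\|_{C(\partial\Omega)}$ controlled, after which the trace interpolation inequality finishes the argument exactly as you describe. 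Without this tangential integration by parts your proof does not close.
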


\begin{proof}
We shall follow \cite{Ladyz}. Integrating twice by parts, we have
\[
\sij \io \partial_{i} (\aijxt \partial_{j} u)\lap u dx
= -\sij \io  (\aijxt \partial_{j} u)\partial_{i}\lap u dx
\]
\[
=\sij \io  \aijxt (\partial_{j} \nabla u) (\partial_{i}\nabla u) dx
+ \sij \io  (\nabla \aijxt) (\partial_{j} u) (\partial_{i}\nabla  u) dx
\]
\[
-\sij \iop \aijxt (\partial_{j} u) (\partial_{i} \pon ) dS(x)
\equiv A_{1}+A_{2}+A_{3}.
\]
Here we used the boundary condition $\lap u_{|\partial \Omega} =0$.
Using ellipticity condition (\ref{a33}), we obtain
\[
\lambda \| D^{2} u \|_{L^{2}(\Omega)}^{2} \leq \sij \io
\aijxt (\partial_{j} \nabla u )(\partial_{i}\nabla u) dx .
\]
The term $A_{2}$ is  estimated as follows.
\[
\left|\sij \io (\nabla \aijxt) (\partial_{j} u ) (\partial_{i}\nabla  u)
dx\right|
\leq \kappa(t) \io |\nabla u ||\nabla^{2} u |dx
\leq  \frac{\lambda}{2} \| \nabla^{2} u \|_{L^{2}(\Omega)}^{2}
+ \frac{\kappa^{2}(t)}{2\lambda} \| \nabla u \|^{2}_{L^{2}(\Omega)}.
\]
To finish the proof it is sufficient to obtain the inequality for the
term $A_{3}$
\eqq{\left| \sij \iop \aijxt (\partial_{j} u)( \partial_{i} \pon )dS(x)
\right|
\leq \frac{\lambda}{4} \| \nabla^{2} u \|_{L^{2}(\Omega)}^{2}
+ C \| \nabla u \|^{2}_{L^{2}(\Omega)},  }{a3}
where $C$ depends only on $\kappa(t)$ and the $C^{2}$-norm
of $\partial \Omega$.

For this purpose we first write the function under the integral on the
left-hand side of (\ref{a3}) in coordinates related with
boundary point $x^{0}\in \partial \Omega$. More precisely, for fixed $t$
and $x^{0} \in \partial \Omega$, we define an orthogonal transformation
$P=\{ p_{m,l} \}_{m,l=1}^{N}$ such that for $y=P(x-x^{0})$ we have
$(0, \dots, 1)=P(n(x^{0})-x^{0})$, where $n(x^{0})$ is the outer normal
vector at $x^{0}$. By the assumption concerning the boundary we have
$\Omega \ni x^{0} \mapsto P(x^{0})$ is $C^{1}$.
Then $y_{m}=\sn{l}p_{m,l}(x_{l}-x_{l}^{0})$ and since $P^{T}=P^{-1}$,
we have  $x_{l}-x^{0}_{l}=\sn{m} p_{m,l} y_{m}$ and
$\frac{\partial}{\partial x_{l}}
= \sn{m} p_{m,l}\frac{\partial}{\partial y_{m}}$. Let $(y_{1}, \dots, y_{N-1},
\omega(y_{1}, \dots , y_{N-1}))$ be a parametrization of some neighborhood
of $x^{0}\in \partial \Omega$. Then
\eqq{
\omega(0)=0, \hd \frac{\partial\omega}{\partial y_{i}}(0)=0,
\hd \m{ for } i=1, \dots, N-1.
}{a4}

If we denote $\ut(y,t)=u (x^{0}+P^{T}y,t)$, then using
$u_{|\partial \Omega}=0$ we obtain
\[
\ut(y_{1}, \dots , y_{N-1}, \omega(y_{1}, \dots , y_{N-1}),t)=0,
\]
in some neighborhood of $0\in \rr^{N-1}$. If we take $i,j\in \{1, \dots, N-1
\}$ and differentiate the above equality with respect to
$y_{i}$ and next $y_{j}$, then we have
\eqq{
\frac{\ppp \ut}{\ppp y_{i}}
+ \frac{\ppp \ut}{\ppp y_{N}} \frac{\ppp \omega}{\ppp y_{i}}=0,
\hd \hd
\frac{\ppp^2 \ut}{\ppp y_{i} \ppp y_{j}}
+ \frac{\ppp^2\ut}{\ppp y_{i} \ppp y_{N}} \frac{\ppp \omega}{\ppp y_{j}}
+ \frac{\ppp^2 \ut}{\ppp y_{N} \ppp y_{j}}
\frac{\ppp \omega}{\ppp y_{i}}\frac{\ppp \omega}{\ppp y_{j}}
+ \frac{\ppp\ut}{\ppp y_{N}}\frac{\ppp^2\omega}{\ppp y_{i} \ppp y_{j}} =0,
}{a5}
in some neighborhood of $0\in \rr^{N-1}$.
Hence  (\ref{a4}) yields
\eqq{
\frac{\ppp \ut}{\ppp y_{j}}(0,t)=0, \hd
\frac{\ppp^2\ut}{\ppp y_{i} \ppp y_{j}} (0,t)
= -\frac{\ppp \ut}{\ppp y_{N}}(0,t)
\frac{\ppp^2\omega}{\ppp y_{i} \ppp y_{j}}(0) \hd \m{ for }
i,j\in \{1, \dots , N-1 \}.  }{a6}

On the other hand, using the equality $\frac{\partial u}{\partial n}(x^{0},t)
= \frac{\ppp \ut}{\ppp y_{N}}(0,t)$ and (\ref{a6}), we see
\[
\partial_i \frac{\partial u}{\partial n}(x^{0},t)
= \sn{k} p_{k,i} \frac{\ppp^2\ut}{\ppp y_{N} \ppp y_{k}}(0,t),
\hd \hd \partial_ju(x^{0},t)
= \sn{m}p_{m,j} \frac{\ppp\ut}{\ppp y_{m}}(0,t)
= p_{N,j}\frac{\ppp \ut}{\ppp y_{N}}(0,t).
\]
Thus
\[
\sij \aij(x^{0},t) \partial_j u(x^{0},t) \partial_i
\pon (x^{0},t)
=  \sum_{i,j,k=1}^{N}  \aij(x^{0},t) p_{N,j}p_{k,i}
\frac{\ppp \ut}{\ppp y_{N}}(0,t)
\frac{\ppp^2\ut}{\ppp y_{N} \ppp y_{k}}(0,t)
\]
\[
= \sum_{i,j=1}^{N}  \aij(x^{0},t) p_{N,j}p_{N,i}
\frac{\ppp \ut}{\ppp y_{N}}(0,t)
\frac{\ppp^2\ut}{\ppp y_{N}^2}(0,t)
+ \sum_{k=1}^{N-1}\sum_{i,j=1}^{N}  \aij(x^{0},t) p_{N,j}p_{k,i}
\frac{\ppp \ut}{\ppp y_{N}}(0,t)
\frac{\ppp^2 \ut}{\ppp y_{N} \ppp y_{k}}(0,t).
\]
We shall show that the first sum vanishes. Indeed,  the Laplace operator is
invariant under orthogonal change of variables, so that
$\lap_{y}\ut(0,t)=\lap_{x}u(x^{0},t)$. On the other side,
by the boundary condition we have $\lap_{x}u(x^{0},t)=0$ and
then by (\ref{a6}) we obtain $\frac{\ppp^2\ut}{\ppp y_{N}^2}(0,t)=0$.
Thus
\[
\sij \iop \aij(x^{0},t) \partial_j u(x^{0},t) \partial_i
\pon (x^{0},t) dS(x^{0})
\]
\[
= \frac{1}{2}\sum_{k=1}^{N-1}\sum_{i,j=1}^{N}
\iop \aij(x^{0},t) p_{N,j}(x^{0})p_{k,i}(x^{0})
\frac{\ppp}{\ppp y_k}\left( \left|
\pon(x^{0},t) \right|^{2} \right) dS(x^{0}).
\]
The key observation is that the differentiation with respect to $y_{k}$ for
$k\in \{1, \dots, N-1 \}$ is in fact the differentiation in the tangential
direction on $\partial \Omega$ and we can integrate by parts.
Therefore,
\[
\sij \iop \aij(x^{0},t) \partial_ju(x^{0},t) \partial_i
\pon (x^{0},t) dS(x^{0}) = \iop K(x,t)\left|  \pon(x,t) \right|^{2} dS(x),
\]
where $\Omega \ni x\mapsto K(x,t)$ is a continuous function and
$\| K(\cdot, t)\|_{C(\partial \Omega)}$ depends only on $\kappa(t)$ and
the $C^{2}$-regularity of $\partial \Omega$. Hence using inequality (21) in
\cite{LadyH}, we have
\[
\left| \sij \iop \aij(x,t) \partial_j u(x,t) \partial_i
\pon (x,t) dS(x)\right|
\leq \hat{C} \iop \left|  \pon(x,t) \right|^{2} dS(x)
\]
\[
\leq \ep \| \nabla^{2} u \|^{2}_{L^{2}(\Omega)}
+ C(\ep)\| \nabla u \|^{2}_{L^{2}(\Omega)},
\]
where $C(\ep)$ depends only on $\ep$, $\kappa(t)$ and the $C^{2}$ regularity of
$\partial \Omega$. If we take $\ep=\frac{\lambda}{4}$, then we get (\ref{a3})
and the proof is finished.
\end{proof}

The following proposition can be obtained formally by integration of
the equality (17) in  lemma~2.1 \cite{Zacher} with $k(t)=\jgja t^{-\alpha}$.
However, the  function $k(t)$ does not belong to $W^{1,1}(0,T)$ and we can
not apply this lemma directly.
\begin{prop}
If $w \in AC[0,T]$ then for $\alpha \in (0,1)$ the following equality
\[
\int_{0}^{T} \ra w(t) \cdot w(t) dt
= \frac{\alpha}{4} \inT\inT \frac{|w(t)- w(\tau)|^{2}}{|t-\tau|^{1+\alpha}}
\dt dt
\]
\eqq{+\frac{1}{2} \jgja  \inT [(T-t)^{-\alpha}+t^{-\alpha}]|w(t)|^{2}dt }{n10}
holds, where $\ra $ denotes the Riemann-Liouville \ derivative.
In particular, for $t\in (0,T)$ the inequality
\eqq{\int_{0}^{t} \ra w(\tau) \cdot w(\tau) \dt
\geq \frac{t^{-\alpha}}{2\gja} \izt |w(\tau)|^{2} \dt}{n11}
holds.
\label{keyesti}
\end{prop}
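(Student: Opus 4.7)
The plan is to reduce the identity (\ref{n10}) to a computation based on the pointwise representation
\[
\ra w(t) = \jgja \left[ \frac{w(t)}{t^{\alpha}} + \alpha \izt \frac{w(t)-w(\tau)}{(t-\tau)^{1+\alpha}}\dt \right],
\]
which is valid for $w\in AC[0,T]$. To obtain it, I start from $\ra w(t) = \jgja [w(0)t^{-\alpha} + \izt (t-\tau)^{-\alpha} w'(\tau)\dt]$, which follows from $\ra w = \da w + \jgja w(0)t^{-\alpha}$ together with the formula for $\da$ on an absolutely continuous function. An integration by parts in $\izt (t-\tau)^{-\alpha}w'(\tau)\dt$, performed on the truncated interval $(0,t-\epsilon)$ and passed to the limit using that $(t-\tau)^{1-\alpha}[w(t)-w(\tau)]\to 0$ as $\tau\to t$ (since $w$ is continuous and $\alpha<1$), converts boundary values and the given integrand into the nonlocal expression displayed above.

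Next, I multiply by $w(t)$ and use the algebraic identity
\[
2w(t)(w(t)-w(\tau)) = |w(t)-w(\tau)|^{2} + (|w(t)|^{2} - |w(\tau)|^{2}).
\]
After integration in $t\in(0,T)$ and Fubini, the first summand produces the symmetric double integral: the change of variables $(t,\tau)\mapsto(\tau,t)$ gives $\int_{0}^{T}\int_{0}^{t}\frac{|w(t)-w(\tau)|^{2}}{(t-\tau)^{1+\alpha}}\,\dt\,dt = \tfrac{1}{2}\inT\inT\frac{|w(t)-w(\tau)|^{2}}{|t-\tau|^{1+\alpha}}\,\dt\,dt$, which accounts for the first term on the right of (\ref{n10}) up to the overall $\jgja$ scaling.

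For the contribution of $|w(t)|^{2}-|w(\tau)|^{2}$, I integrate by parts in $\tau$ with $v=(t-\tau)^{-\alpha}/\alpha$ and use $(|w|^{2})'=2ww'$; then I swap the order of integration using $\int_{\tau}^{T}(t-\tau)^{-\alpha}dt = (T-\tau)^{1-\alpha}/(1-\alpha)$ and integrate by parts a second time in $\tau$. The $|w(0)|^{2}$-boundary terms cancel, leaving $\tfrac{1}{\alpha}\inT[(T-t)^{-\alpha} - t^{-\alpha}]|w(t)|^{2}dt$. Combined with the $\jgja\inT t^{-\alpha}|w(t)|^{2}dt$ coming from the first summand of the pointwise representation, this produces the symmetric boundary expression $\tfrac{1}{2\gja}[(T-t)^{-\alpha}+t^{-\alpha}]|w(t)|^{2}$ in (\ref{n10}). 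The inequality (\ref{n11}) then follows by applying (\ref{n10}) on $(0,t)$ instead of $(0,T)$: the nonlocal double integral is nonnegative and can be dropped, the boundary term $(t-\tau)^{-\alpha}|w(\tau)|^{2}$ is nonnegative, and the pointwise bound $\tau^{-\alpha}\geq t^{-\alpha}$ for $\tau\in(0,t)$ yields the claimed lower bound.

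The main technical obstacle is the rigorous justification of the singular integrations by parts and the Fubini exchange when $w$ is only absolutely continuous, so that $w'\in L^{1}(0,T)$ and all boundary contributions at $\tau\to t$ need a truncation argument. I would perform every manipulation on regions of the form $(\delta,t-\epsilon)$, show that the vanishing-at-$\tau=t$ boundary terms go to zero because of continuity of $w$ combined with $\alpha<1$, and pass to the limit via dominated convergence using the $L^{1}$-integrability of $(t-\tau)^{-\alpha}$. As an alternative one may first establish (\ref{n10}) for $w\in C^{1}[0,T]$ by the above calculation and then extend to $AC[0,T]$ by a density argument in the norms appearing on the right-hand side.
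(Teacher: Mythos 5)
Your overall route is sound and the bookkeeping is correct: passing to the Marchaud form of $\partial^{\alpha}w$, polarizing $2w(t)(w(t)-w(\tau))$, symmetrizing the squared term and double-integrating by parts the $|w(t)|^{2}-|w(\tau)|^{2}$ term does reproduce (\ref{n10}) (and the Fubini exchange you worry about is justified, since $\bigl||w(t)|^{2}-|w(\tau)|^{2}\bigr|\leq\int_{\tau}^{t}|(|w|^{2})'(s)|\,ds$ makes the integrand absolutely integrable over the triangle). The one genuine gap is the boundary term in your derivation of the Marchaud representation. Integrating $\int_{0}^{t-\epsilon}(t-\tau)^{-\alpha}w'(\tau)\,d\tau$ by parts produces the endpoint contribution $\epsilon^{-\alpha}[w(t)-w(t-\epsilon)]$, i.e.\ the kernel enters with exponent $-\alpha$, not $1-\alpha$ as you wrote; since $\epsilon^{-\alpha}\to\infty$, continuity of $w$ alone does not make this vanish. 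For $w\in AC[0,T]$ the correct device is the Lebesgue differentiation theorem: $w(t)-w(t-\epsilon)=\int_{t-\epsilon}^{t}w'(s)\,ds=O(\epsilon)$ for a.e.\ $t$, hence the boundary term is $O(\epsilon^{1-\alpha})\to 0$ for a.e.\ $t$, which suffices because the representation is subsequently integrated in $t$. (This is exactly how the paper disposes of its own endpoint term $(t-\tau)^{-\alpha}|w(t)-w(\tau)|^{2}$.) Your fallback of proving the identity for $C^{1}$ and extending by density is also workable but not free: the limit passage in the singular double integral only gives an inequality by Fatou, and upgrading it to equality needs an extra argument, so the direct a.e.\ argument is cleaner.

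For comparison, the paper never forms the Marchaud kernel against $w(t)$: it writes $\int_{0}^{T}\partial^{\alpha}w\cdot w\,dt=\int_{0}^{T}I^{1-\alpha}w'\cdot w\,dt+\frac{w(0)}{\Gamma(1-\alpha)}\int_{0}^{T}t^{-\alpha}w(t)\,dt$, splits $w'(\tau)w(t)=w'(\tau)[w(t)-w(\tau)]+w'(\tau)w(\tau)$ under the integral, and integrates by parts the exact $\tau$-derivatives $-\frac{1}{2}\partial_{\tau}|w(t)-w(\tau)|^{2}$ and $\frac{1}{2}\partial_{\tau}|w(\tau)|^{2}$. The advantage is that the delicate endpoint term is then quadratic in $w(t)-w(\tau)$, hence $O(\epsilon^{2-\alpha})$ at a.e.\ Lebesgue point, and the vanishing is immediate; otherwise the two computations are equivalent and lead to the same cancellation of the $|w(0)|^{2}$ contributions and to the same passage from (\ref{n10}) to (\ref{n11}).
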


\begin{proof}
We first note that the left-hand side of (\ref{n10}) is finite,
because by proposition~\ref{propthree} $\ija w $ is absolutely continuous and
$\ra w=\frac{d}{dt} \ija w $ is in $L^{1}(0,T)$. Then we calculate
\[
\int_{0}^{T} \ra w(t) \cdot w(t) dt = \int_{0}^{T} \ddt \ija [w(t) -w(0) ]
\cdot w(t) dt + \frac{w(0)}{\gja} \inT t^{-\alpha} w(t) dt
\]
\[
= \int_{0}^{T}  \ija w'(t) \cdot w(t) dt + \frac{w(0)}{\gja}
\inT t^{-\alpha} w(t) dt,
\]
where we applied proposition~\ref{propthree}. Next, by definition of $\ia$
we have
\[
\jgja \int_{0}^{T} \izt  \ta  w'(\tau) \dt  \cdot w(t) dt
+ \frac{w(0)}{\gja} \inT t^{-\alpha} w(t) dt
\]
\[
=\jgja \int_{0}^{T} \izt  \ta  w'(\tau)   \cdot [w(t) -w(\tau)]\dt  dt
+ \frac{w(0)}{\gja} \inT t^{-\alpha} w(t) dt
\]
\[
+ \jgja \int_{0}^{T} \izt  \ta  w'(\tau)   w(\tau)\dt  dt
\]
\[
=- \jd \jgja \int_{0}^{T} \izt  \ta  \left( |w(t) -w(\tau)|^{2}  \right)
_{\tau} \dt  dt+ \frac{w(0)}{\gja} \inT t^{-\alpha} w(t) dt
\]
\[
 +\jd \jgja \int_{0}^{T} \izt  \ta  \left( |w(\tau)|^{2}\right)_{\tau} \dt dt
\]
\[
= \frac{\alpha}{2} \jgja \int_{0}^{T} \izt  \taj   |w(t) -w(\tau)|^{2} \dt dt
- \jd \jgja \int_{0}^{T}   \ta  |w(t) -w(\tau)|^{2}  \Big|_{\tau=0}^{\tau =t}
dt
\]
\[
+ \frac{w(0)}{\gja} \inT t^{-\alpha} w(t) dt+\jd \jgja \int_{0}^{T}
\left( |w(\tau)|^{2}  \right)_{\tau} \int_{\tau}^{T}  \ta   dt  \dt.
\]
Using the Lebesgue differential theorem, we have $|t-\tau|^{-1}\int_{\tau}^{t}
w'(s)ds \underset{\tau \rightarrow t-}{\longrightarrow} w'(t)$ for a.a. $t$ and thus $\lim\limits_{\tau \rightarrow t^{-}} |t-\tau |^{-\alpha}
 |w(t)-w(\tau)|^{2}= \lim\limits_{\tau \rightarrow t^{-}} |t-\tau |^{2-\alpha}
\Big| |t-\tau|^{-1} \int_{\tau}^{t} w'(s) ds \Big|^{2}=0$.
Hence
\[
\frac{\alpha}{2} \jgja \int_{0}^{T} \izt  \taj   |w(t) -w(\tau)|^{2}   \dt  dt
+  \jd \jgja \int_{0}^{T}   t^{-\alpha}  |w(t) -w(0)|^{2}     dt
\]
\[
+ \frac{w(0)}{\gja} \inT t^{-\alpha} w(t) dt
+ \jd \frac{1}{\Gamma(2-\alpha)} \int_{0}^{T} (T-\tau)^{1-\alpha}
\left( |w(\tau)|^{2}  \right)_{\tau}  \dt
\]
\[
= \frac{\alpha}{2} \jgja \int_{0}^{T} \izt  \taj   |w(t) -w(\tau)|^{2}   \dt dt
+ \jd \jgja \int_{0}^{T}   t^{-\alpha}  |w(t) -w(0)|^{2}     dt
\]
\[
+ \frac{w(0)}{\gja} \inT t^{-\alpha} w(t) dt+\jd \frac{1}{\Gamma(1-\alpha)}
\int_{0}^{T} (T-\tau)^{-\alpha} |w(\tau)|^{2}  \dt
- \jd \frac{1}{\Gamma(2-\alpha)} T^{1-\alpha} |w(0)|^{2}
\]
\[
= \frac{\alpha}{2} \jgja \int_{0}^{T} \izt  \taj   |w(t) -w(\tau)|^{2}  \dt dt
+ \jd \frac{1}{\Gamma(1-\alpha)} \int_{0}^{T} (T-\tau)^{-\alpha} |w(\tau)|^{2}
\dt
\]
\[
+ \jd \frac{1}{\Gamma(1-\alpha)} \int_{0}^{T} \tau^{-\alpha} |w(\tau)|^{2}
\dt,
\]
and the proof is finished.
\end{proof}

\begin{coro}
Assume that  $t\in (0,T)$ and $\alpha \in (0,1)$.
If $w\in L^{2}(0,T)$ and $\ija w \in H^{1}(0,T)$, then (\ref{n11}) holds for
$w$.

\label{lowestiYa}
\end{coro}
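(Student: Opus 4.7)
The plan is to deduce the bound from proposition~\ref{keyesti}, which already provides (\ref{n11}) for any $\tilde{w}\in AC[0,T]$ on each subinterval $(0,t)\subset(0,T)$. The strategy is density: construct a sequence $w_{n}\in AC[0,T]$ with $w_{n}\to w$ in $L^{2}(0,t)$ and $\ra w_{n}\to \ra w$ in $L^{2}(0,t)$, then pass to the limit in the inequality for $w_{n}$. The right-hand side converges by strong $L^{2}$ convergence, while the left-hand side converges by Cauchy-Schwarz, since both factors are converging strongly in $L^{2}(0,t)$.

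To build the approximation, I would set $V=\ija w\in H^{1}(0,T)$, so that $V'=\ra w\in L^{2}(0,T)$, and then approximate $V$ in $H^{1}(0,T)$ by smooth functions $V_{n}\in C^{\infty}([0,T])$ with $V_{n}(0)=V(0)$. Define $w_{n}$ by the relation $\ija w_{n}=V_{n}$, i.e.\ by inverting the fractional integral of order $1-\alpha$. In the setting where this corollary is applied in the paper (the uniqueness argument in theorem~\ref{mainweak}, where $\ija u$ has vanishing trace at $t=0$), we have $V(0)=0$; then applying $I^{\alpha}$ to $V=\ija w$ and using the semigroup property gives $I^{\alpha}V=I w$, and proposition~\ref{propthree} yields $(I^{\alpha}V)'=I^{\alpha}V'+\frac{t^{\alpha-1}}{\Gamma(\alpha)}V(0)=I^{\alpha}V'$, so $w=I^{\alpha}V'$ and analogously $w_{n}=I^{\alpha}V_{n}'$. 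This $w_{n}$ is absolutely continuous on $[0,T]$, so proposition~\ref{keyesti} applies. Moreover, the $H^{1}$-convergence $V_{n}\to V$ delivers $\ra w_{n}=V_{n}'\to V'=\ra w$ in $L^{2}(0,T)$, and the boundedness of $I^{\alpha}$ on $L^{2}(0,T)$ gives $w_{n}=I^{\alpha}V_{n}'\to I^{\alpha}V'=w$ in $L^{2}(0,T)$.

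The main obstacle is the recovery of $w$ from its fractional integral $V$ in a form suitable for approximation: the identity $w=I^{\alpha}V'$ depends on $V(0)=0$, which is the natural implicit hypothesis in the intended applications. Granted this and the two $L^{2}$-convergences above, proposition~\ref{keyesti} applied to each $w_{n}$ yields
\[
\int_{0}^{t}\ra w_{n}(\tau)\, w_{n}(\tau)\, d\tau \ \geq\ \frac{t^{-\alpha}}{2\gja}\int_{0}^{t}|w_{n}(\tau)|^{2}\, d\tau,
\]
and passing to the limit $n\to\infty$ on both sides completes the proof of (\ref{n11}) for $w$.
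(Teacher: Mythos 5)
Your proof is correct and follows essentially the same route as the paper: the paper likewise approximates $w$ by absolutely continuous functions to which proposition~\ref{keyesti} applies and then passes to the limit in (\ref{n11}), except that it invokes the density result of \cite{Ya} (a sequence $w_{n}\in C^{1}[0,T]$ with $w_{n}(0)=0$, $w_{n}\rightarrow w$ and $\ra w_{n}\rightarrow \ra w$ in $L^{2}(0,T)$) rather than building the approximants explicitly from smooth $H^{1}$-approximations of $\ija w$. The caveat you flag about $\ija w(0)=0$ is genuine but equally implicit in the paper's argument --- a sequence as in \cite{Ya} with $w_{n}(0)=0$ can exist only when $\ija w\in {}_{0}H^{1}(0,T)$ --- and this condition holds where the corollary is applied in the uniqueness part of theorem~\ref{mainweak}.
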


\begin{proof}
According to \cite{Ya}, there exists a sequence $\{ w_{n} \}
\subset C^{1}[0,T]$ such that $w_{n}(0)=0$ and $w_{n}\rightarrow w$,
\hd $\ddt \ija w_{n} \rightarrow \ddt \ija w$ in $L^{2}(0,T)$.
Then applying (\ref{n11}) with $w_{n}$ and next taking the limit, we obtain
(\ref{n11}).
\end{proof}

\kr{
\no We recall some results from \cite{GY} and \cite{Ya}. We denote
\[
\hzj=\{ u \in H^{1}(0,T):\hd  u(0)=0  \}, \hd X_{\al}= \overline{  \spann{ \{ h_{n} \} }  }^{X_{\al}},
\]
where $h_{n}(t)=\sqrt{\frac{2}{T}} \sin(\frac{t}{\lambda_{n}})$, \hd $\la_{n}=\frac{T}{\pi(n+\frac{1}{2})}$, \hd $n=0,1, \dots$ and $X_{\al}$ is Hilbert space with the following inner product
\[
(u,v)_{X_{\al}}= \sum_{n=0}^{\infty} \la_{n}^{-2\al}(u,h_{n})_{L^{2}(0,T)} \overline{(v,h_{n})_{L^{2}(0,T)}}.
\]
By lemma~8 in \cite{GY} we have $X_{\al}=\hzaT$, where
\[
\hzaT=\left\{
\begin{array}{lll}
H^{\al}(0,T)  & \m{ for } & \al\in (0,\frac{1}{2}), \\
\{ u \in H^{\frac{1}{2}}(0,T): \hd \int_{0}^{T}\frac{|u(t)|^{2}}{t}dt<\infty \} & \m{ for } & \al=\frac{1}{2} \\
\{ u \in H^{\al}(0,T): \hd u(0)=0\} & \m{ for } & \al\in (\frac{1}{2},1),
\end{array}
\right.
\]
and for $\al\not = \frac{1}{2}$ we have $\| u \|_{\hzaT}=\| u\|_{H^{\al}(0,T)}$, but
\[
\| u \|_{{}_{0}H^{\frac{1}{2}}(0,T)}= \left( \| u \|^{2}_{H^{\frac{1}{2} }(0,T)}+\int_{0}^{T}\frac{|u(t)|^{2}}{t}dt \right)^{\frac{1}{2}}.
\]
From \cite{GY} and \cite{Ya} we deduce that for $\al\in [0,1]$ the operator $\ia: L^{2}(0,T)\longrightarrow \hzaT$ is isomorphism and the following inequalities
\eqq{e^{-\pi\sqrt{\al(1-\al)}  } \| u \|_{\hzaT} \leq \| \ra u \|_{L^{2}(0,T)}\leq e^{\pi\sqrt{\al(1-\al)}  } \| u \|_{\hzaT} \hd \m{ for } u \in \hzaT,}{YGa}
\eqq{e^{-\pi\sqrt{\al(1-\al)}  } \| \ia f  \|_{\hzaT} \leq \|  u \|_{L^{2}(0,T)}\leq e^{\pi\sqrt{\al(1-\al)}  } \| \ia f  \|_{\hzaT} \hd \m{ for } f  \in L^{2}(0,T),}{YGb}
holds. The above estimates are a consequence of Heinz-Kato theorem (see theorem 2.3.4 in \cite{Tanabe}).

For measurable $f$ defined on $(0,T)$ we set
\[
\tilde{f}(t)= \left\{ \begin{array}{cll}
f(t) & \m{ for } & t\in (0,T) \\
-f(-t) & \m{ for } & t\in (-T,0) \\
0 & \m{elsewhere.}\\
\end{array}
 \right.
\]
We define
\[
\Pi_{n}f(t)=\eta_{\frac{1}{n}}*\tilde{f}(t), \hd n=1,2, \dots,
\]
where $\eta_{\ep}$ is mollifier, i.e. $\eta_{\ep}\geq 0$, $\int_{\rr} \eta_{\ep}=1$, $\eta_{\ep}\in C^{\infty}_{0}(-\frac{\ep}{T}, \frac{\ep}{T})$ and we assume that in addition $\eta_{\ep}(t)=\eta_{\ep}(-t)$.

\begin{prop}
For each $\beta \in (0,1)$ and $n\in \N$ the following inequality holds
\eqq{\| \partial^{\beta} \Pi_{n} f \|_{L^{2}(0,T)}   \leq 2 e^{2\pi\sqrt{\beta(1-\beta)}  } \| \partial^{\beta}  f \|_{L^{2}(0,T)},   \hd \m{ if } \hd I^{1-\beta} f \in \hzjT.}{nova}
\label{estiapprox}
\end{prop}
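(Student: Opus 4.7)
The plan is to apply the norm equivalence (\ref{YGa}) (read with $\al$ replaced by $\beta$) at both ends, in order to convert the statement into an operator bound for $\Pi_n$ on ${}_{0}H^{\beta}(0,T)$. Specifically, once one verifies $\Pi_n f \in {}_{0}H^{\beta}(0,T)$, applying (\ref{YGa}) with $u=\Pi_n f$ and then with $u=f$ yields
\[
\|\partial^\beta \Pi_n f\|_{L^2(0,T)} \le e^{\pi\sqrt{\beta(1-\beta)}}\,\|\Pi_n f\|_{{}_{0}H^{\beta}(0,T)},\qquad \|f\|_{{}_{0}H^{\beta}(0,T)} \le e^{\pi\sqrt{\beta(1-\beta)}}\,\|\partial^\beta f\|_{L^2(0,T)},
\]
so it suffices to prove the operator estimate $\|\Pi_n f\|_{{}_{0}H^{\beta}(0,T)} \le 2\,\|f\|_{{}_{0}H^{\beta}(0,T)}$. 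Membership of $\Pi_n f$ in ${}_{0}H^{\beta}(0,T)$ follows from the smoothness of the convolution together with $(\Pi_n f)(0)=0$, which is immediate from the oddness of $\tilde f$ and the evenness of $\eta_{1/n}$; this handles both the trace condition for $\beta > \tfrac12$ and the weighted integrability at $\beta = \tfrac12$.

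For the operator bound I would expand $f = \sum_k c_k h_k$, giving $\|f\|_{{}_{0}H^{\beta}(0,T)}^{2} = \sum_k \la_k^{-2\beta} |c_k|^{2}$, and extend each $h_k$ to $\mathbb{R}$ by the same sine formula $\bar h_k(t) = \sqrt{2/T}\sin(\pi(k+\tfrac12)t/T)$. The evenness of $\eta_{1/n}$ combined with the oddness of $\bar h_k$ gives $\eta_{1/n}\ast \bar h_k = m_k^{(n)} \bar h_k$ on $\mathbb{R}$, with multiplier $m_k^{(n)} = \int \eta_{1/n}(\sigma)\cos\bigl(\pi(k+\tfrac12)\sigma/T\bigr)\,d\sigma$ satisfying $|m_k^{(n)}| \le 1$. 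Writing $\bar f = \sum_k c_k \bar h_k$ for the $4T$-periodic continuation of $f$, the restriction of $\eta_{1/n}\ast \bar f$ to $(0,T)$ equals $\sum_k c_k m_k^{(n)} h_k$, whose ${}_{0}H^{\beta}(0,T)$-norm is bounded by $\|f\|_{{}_{0}H^{\beta}(0,T)}$ by orthogonality of the $\{h_k\}$ in the weighted $\ell^{2}$-norm defining this space.

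The remaining piece is the boundary correction $\eta_{1/n}\ast R$, where $R = \bar f - \tilde f$ is supported in $\mathbb{R}\setminus(-T,T)$; on $(0,T)$ it is concentrated in the thin layer $(T-1/(nT),\,T)$. This is the main technical obstacle: its $L^{2}$-norm is trivially small, but the fractional ${}_{0}H^{\beta}(0,T)$-norm is sensitive to all scales. I would handle it by exploiting the $4T$-periodicity, which identifies $R$ on $(T, T + 1/(nT))$ with the even reflection of $f$ across $t = T$, re-expanding this reflected piece in the $\{h_k\}$ series, and applying Bessel's inequality in the weighted $\ell^{2}$-norm defining ${}_{0}H^{\beta}(0,T)$. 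Summing the two contributions then produces the constant $2$ and completes the proof.
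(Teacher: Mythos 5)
Your reduction coincides with the paper's final step: apply (\ref{YGa}) to $\Pi_{n}f$ and to $f$, so that (\ref{nova}) follows once one has the operator bound $\| \Pi_{n}f \|_{{}_{0}H^{\beta}(0,T)} \leq 2 \| f \|_{{}_{0}H^{\beta}(0,T)}$. Where you diverge is in how that bound is obtained: the paper interpolates between the two endpoint estimates on $L^{2}(0,T)$ and on ${}_{0}H^{1}(0,T)$, whereas you compute directly in the sine basis $\{h_{k}\}$. Your treatment of the periodic part is correct: evenness of $\eta_{1/n}$ gives $\eta_{1/n}\ast \bar h_{k}=m_{k}^{(n)}\bar h_{k}$ with $|m_{k}^{(n)}|\leq 1$, hence $\| \eta_{1/n}\ast \bar f \|_{{}_{0}H^{\beta}(0,T)} \leq \| f \|_{{}_{0}H^{\beta}(0,T)}$, and the observation $(\Pi_{n}f)(0)=0$ is also right.

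The genuine gap is the boundary correction $\eta_{1/n}\ast R$. ``Re-expanding in the $h_{k}$ series and applying Bessel's inequality'' is not an argument: since $\{h_{k}\}$ is a complete orthonormal system, expansion gives Parseval, and nothing dominates the coefficients of the correction by those of $f$; the $X_{\beta}$-norm carries the growing weights $\lambda_{k}^{-2\beta}$, so smallness of the correction in $L^{2}$ buys nothing. Concretely, for $t\in (T-\frac{1}{nT},T)$ the correction equals $\int_{-1/(nT)}^{t-T}\eta_{1/n}(\sigma)f(2T-t+\sigma)\,d\sigma$ and vanishes for smaller $t$: it is a layer of width $\sim 1/(nT)$ whose amplitude is a local average of $f$ near $t=T$. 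Because ${}_{0}H^{\beta}(0,T)$ imposes no vanishing at $t=T$, taking e.g. $f(t)=t$ produces a layer of height $\approx T/2$ whose $H^{\beta}$-seminorm grows like $n^{\beta-\frac{1}{2}}$; thus for $\beta>\frac{1}{2}$ the estimate you need for this term is false uniformly in $n$, and no re-summation can produce it. For $\beta<\frac{1}{2}$ the correction does tend to $0$ in ${}_{0}H^{\beta}(0,T)$, but that must be shown by a direct boundary-layer estimate (combining the pointwise bound on the layer with its width), not by Bessel. Note that this same layer at $t=T$ is precisely what the paper's ``direct calculation'' at the ${}_{0}H^{1}$ endpoint has to control before interpolating, so your decomposition has correctly isolated the crux of the proposition — but the step that resolves it is missing.
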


\begin{proof}
By direct calculations we have
\[
\| \Pi_{n} f \|_{\hzjT} \leq 2 \| f \|_{\hzjT} \hd \m{ for } \hd f\in \hzjT,
\]
and
\[
\| \Pi_{n} f \|_{L^{2}(0,T)} \leq 2 \| f \|_{L^{2}(0,T)} \hd \m{ for } \hd f\in L^{2}(0,T).
\]
Thus by interpolation argument (see theorem~5.1 and remark 11.5 in \cite{Lions}) we have
\[
\| \Pi_{n} f \|_{{}_{0}H^{\beta}(0,T)} \leq 2 \| f \|_{{}_{0}H^{\beta}(0,T)} \hd \m{ for } \hd f\in {}_{0}H^{\beta}(0,T).
\]
Applying (\ref{YGa}) we obtain (\ref{nova}).
\end{proof}

\begin{prop}
Assume that $\beta\in [0,1)$ and $f\in L^{2}(0,T)$ satisfies  \m{$I^{1-\beta} f \in \hzj(0,T)$.} Then
\eqq{\partial^{\beta}\Pi_{n}f\longrightarrow \partial^{\beta} f \m{ \hd in \hd } L^{2}(0,T). }{popnowa}
Furthermore, this convergence is uniform with respect $\beta\in [0,\delta]$ for any $\delta\in (0,1)$.
\end{prop}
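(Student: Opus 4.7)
The plan is to reduce the $L^{2}$-convergence of $\partial^{\beta}\Pi_{n}f$ to an ${}_{0}H^{\beta}$-convergence of $\Pi_{n}f$ itself via the isomorphism inequalities (\ref{YGa})--(\ref{YGb}), and then to establish the latter by a density-plus-uniform-bound argument with interpolation constants controlled uniformly on $[0,\delta]$.

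First, under the hypothesis $I^{1-\beta}f\in {}_{0}H^{1}(0,T)$, one has $\partial^{\beta}f\in L^{2}(0,T)$ and hence $f=I^{\beta}(\partial^{\beta}f)\in {}_{0}H^{\beta}(0,T)$. In particular $\Pi_{n}f-f\in {}_{0}H^{\beta}(0,T)$, and (\ref{YGa}) yields
\[
\|\partial^{\beta}(\Pi_{n}f-f)\|_{L^{2}(0,T)}\leq e^{\pi\sqrt{\beta(1-\beta)}}\|\Pi_{n}f-f\|_{{}_{0}H^{\beta}(0,T)}\leq e^{\pi/2}\|\Pi_{n}f-f\|_{{}_{0}H^{\beta}(0,T)}.
\]
The prefactor is uniformly bounded on $[0,1]$, so the task reduces to proving $\|\Pi_{n}f-f\|_{{}_{0}H^{\beta}(0,T)}\to 0$, uniformly in $\beta\in[0,\delta]$ under the standing assumption $I^{1-\delta}f\in {}_{0}H^{1}(0,T)$ (which via $I^{1-\beta}=I^{\delta-\beta}\circ I^{1-\delta}$ and proposition~\ref{propthree} implies the hypothesis for every smaller $\beta$).

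For the reduced claim I would combine two facts: the uniform operator bound $\|\Pi_{n}g\|_{{}_{0}H^{\beta}}\leq 2\|g\|_{{}_{0}H^{\beta}}$ established in the proof of proposition~\ref{estiapprox}, and the density of $C_{c}^{\infty}(0,T)$ in ${}_{0}H^{\beta}(0,T)$ for every $\beta\in[0,1)$. Given $\ep>0$, pick $g\in C_{c}^{\infty}(0,T)$ with $\|f-g\|_{{}_{0}H^{\delta}}<\ep$; then the Hilbert-scale interpolation ${}_{0}H^{\beta}=(L^{2},{}_{0}H^{\delta})_{\beta/\delta,2}$ gives
\[
\|f-g\|_{{}_{0}H^{\beta}}\leq C\|f-g\|_{L^{2}}^{1-\beta/\delta}\|f-g\|_{{}_{0}H^{\delta}}^{\beta/\delta}\leq C\ep
\]
uniformly in $\beta\in[0,\delta]$. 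The triangle inequality, combined with $\|\Pi_{n}(f-g)\|_{{}_{0}H^{\beta}}\leq 2\|f-g\|_{{}_{0}H^{\beta}}$, reduces the problem to showing $\|\Pi_{n}g-g\|_{{}_{0}H^{\beta}}\to 0$ for the fixed smooth $g$. Standard mollifier estimates applied to the odd extension of $g$ give $\|\Pi_{n}g-g\|_{L^{2}}\to 0$ and a uniform bound on $\|\Pi_{n}g-g\|_{{}_{0}H^{1}}$, and one more interpolation
\[
\|\Pi_{n}g-g\|_{{}_{0}H^{\beta}}\leq C\|\Pi_{n}g-g\|_{L^{2}}^{1-\beta}\|\Pi_{n}g-g\|_{{}_{0}H^{1}}^{\beta}
\]
forces the right-hand side to zero uniformly in $\beta\in[0,\delta]$, since the $L^{2}$-exponent $1-\beta$ is bounded below by $1-\delta>0$.

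The principal obstacle is verifying the uniformity of the interpolation constants in $\beta$, especially near the critical exponent $\beta=\frac{1}{2}$ where the ${}_{0}H^{1/2}$ norm carries the extra weighted term $\int_{0}^{T}|u(t)|^{2}/t\,dt$. This is handled by working in the $X_{\alpha}$ scale introduced before the proposition, which by \cite{GY,Ya} forms a genuine Hilbert interpolation scale between $L^{2}$ and ${}_{0}H^{1}$; the interpolation constants are therefore bounded on every compact subinterval of $[0,1)$, so all interpolation inequalities above hold with a single constant depending only on $\delta$.
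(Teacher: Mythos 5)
Your overall strategy coincides with the paper's: both proofs are density-plus-uniform-boundedness arguments that use the isomorphism (\ref{YGa}) to pass between $\|\partial^{\beta}(\cdot)\|_{L^{2}(0,T)}$ and the ${}_{0}H^{\beta}(0,T)$-norm, the operator bound $\|\Pi_{n}g\|_{{}_{0}H^{\beta}(0,T)}\leq 2\|g\|_{{}_{0}H^{\beta}(0,T)}$ from the proof of proposition~\ref{estiapprox}, and a stronger-norm estimate for the mollification of a smooth approximant. Where the paper controls the smooth term via the embedding ${}_{0}H^{1}(0,T)\hookrightarrow{}_{0}H^{\beta}(0,T)$ with constant $[1+(1-\beta)^{-1}c(T)]^{1/2}$, you use two interpolation inequalities in the $X_{\alpha}$ scale; that is a legitimate variant and arguably makes the uniformity in $\beta\in[0,\delta]$ more transparent.

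There is, however, one concrete gap: you approximate $f$ by $g\in C_{c}^{\infty}(0,T)$ in the ${}_{0}H^{\delta}(0,T)$-norm, and this density statement is false for $\delta>\frac{1}{2}$. Indeed, for such $\delta$ the space ${}_{0}H^{\delta}(0,T)=\{u\in H^{\delta}(0,T):\ u(0)=0\}$ imposes no condition at the right endpoint, while $H^{\delta}(0,T)\hookrightarrow C([0,T])$ makes the trace $u\mapsto u(T)$ continuous; hence the closure of $C_{c}^{\infty}(0,T)$ lies in $\{u:\ u(0)=u(T)=0\}$ and misses, e.g., $u(t)=t$. The paper avoids this by taking the approximant $\hat{f}$ in ${}_{0}C^{1}([0,T])=\{u\in C^{1}([0,T]):\ u(0)=0\}$, whose density in ${}_{0}H^{\beta}(0,T)$ is quoted from \cite{Ya}. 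If you make that replacement, your final mollification step must be re-examined at $t=T$: the odd-then-zero extension $\tilde{g}$ acquires a jump at $t=T$ whenever $g(T)\neq 0$, so the uniform bound on $\|\Pi_{n}g-g\|_{{}_{0}H^{1}(0,T)}$ that you invoke is no longer automatic (the contribution of $\eta_{1/n}(\cdot-T)g(T)$ to $\|(\Pi_{n}g)'\|_{L^{2}(0,T)}$ grows like $n^{1/2}$), and one must either cut off near $t=T$ or check that this endpoint contribution is absorbed by the interpolation for the relevant range of $\beta$. For $\delta<\frac{1}{2}$ your argument goes through as written.
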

\begin{proof}
According to \cite{Ya}, the set ${}_{0}C^{1}([0,T])=\{ u\in C^{1}([0,T]): \hd u(0)=0\}$ is dense in ${}_{0}H^{\beta}(0,T)$. We fix $\ep>0$ and then from (\ref{YGa}) we deduce that there exists $\hat{f}\in {}_{0}C^{1}([0,T])$ such that
\[
\| \partial^{\beta} f - \partial^{\beta} \hat{f} \|_{L^{2}(0,T)}\leq \frac{\ep}{13}.
\]
Then, using (\ref{nova}) we have
\[
\| \rb \Pi_{n} f - \rb f\|_{L^{2}(0,T)}
\]
\[
\leq\| \rb \Pi_{n} f - \rb \Pi_{n}\hat{f} \|_{L^{2}(0,T)}+\| \rb \Pi_{n} \hat{f} - \rb \hat{f}\|_{L^{2}(0,T)}+\| \rb \hat{f} - \rb f\|_{L^{2}(0,T)}
\]
\[
\leq (2e^{2\pi \sqrt{\beta(1-\beta)}} +1)\frac{\ep}{13}+\| \rb \Pi_{n} \hat{f} - \rb \hat{f}\|_{L^{2}(0,T)}.
\]
To estimate the last term we write
\[
\| \rb \Pi_{n} \hat{f} - \rb \hat{f}\|_{L^{2}(0,T)} \leq e^{\pi \sqrt{\beta(1-\beta)}} \|  \Pi_{n} \hat{f} -  \hat{f}\|_{{}_{0}H^{\beta}(0,T)}
\]
\[
\leq e^{\pi \sqrt{\beta(1-\beta)}} [1+(1-\beta)^{-1}c(T)]^{1/2}\|  \Pi_{n} \hat{f} -  \hat{f}\|_{{}_{0}H^{1}(0,T)},
\]
where in the last inequality we applied the continuity of Hardy-Litlewood maximal operator in $L^{2}$. The last expression is estimated by $\ep/13$, provided $n$ is large enough and the estimate is uniform with respect to $\beta\in [0,\delta]$ for any $\delta \in (0,1)$.

\end{proof}

\no The above results can be extended to the case of vector value functions (see remark 11.5 in \cite{Lions}) and we have

\begin{prop}
If $H$ is a Hilbert space, then for each $\beta \in (0,1)$ and $n\in \N$ the following inequality holds
\eqq{\| \partial^{\beta} \Pi_{n} f \|_{L^{2}(0,T;H)}   \leq 2 e^{2\pi\sqrt{\beta(1-\beta)}  } \| \partial^{\beta}  f \|_{L^{2}(0,T;H)},   \hd \m{ if } \hd I^{1-\beta} f \in \hzj(0,T;H).
}{novaH}
Futhermore,
\eqq{\partial^{\beta}\Pi_{n}f\longrightarrow \partial^{\beta} f \m{ \hd in \hd } L^{2}(0,T;H) }{popnowab}
and this convergence is uniform with respect $\beta\in [0,\delta]$ for any $\delta\in (0,1)$.

\label{estiapproxH}
\end{prop}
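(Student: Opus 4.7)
The plan is to lift each ingredient of the scalar argument (Proposition~\ref{estiapprox} and the convergence proposition preceding it) to the $H$-valued case, using the fact that $\Pi_{n}$ acts pointwise in $t$ via a real-valued kernel and therefore commutes with the $H$-valued structure.

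First, I would verify the two endpoint estimates
\[
\| \Pi_{n} f \|_{L^{2}(0,T;H)} \leq 2 \| f \|_{L^{2}(0,T;H)}, \qquad
\| \Pi_{n} f \|_{{}_{0}H^{1}(0,T;H)} \leq 2 \| f \|_{{}_{0}H^{1}(0,T;H)}.
\]
Both are obtained exactly as in the scalar case: because $\eta_{1/n}$ is real-valued and $\|\Pi_{n}f(t)\|_{H} \leq (\eta_{1/n} * \|\widetilde{f}(\cdot)\|_{H})(t)$, Young's inequality on $\R$ gives the $L^{2}$-bound, while differentiation of the convolution and the vanishing of $\Pi_{n}f$ at $t=0$ (guaranteed by the odd reflection combined with $\eta_{1/n}(s)=\eta_{1/n}(-s)$) give the $\hzj$-bound, with the factor $2$ arising from the odd extension to $(-T,T)$.

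Next, I would invoke the interpolation theorem for Hilbert-space-valued functions (Theorem~5.1 together with Remark~11.5 in \cite{Lions}) to conclude
\[
\| \Pi_{n} f \|_{{}_{0}H^{\beta}(0,T;H)} \leq 2 \| f \|_{{}_{0}H^{\beta}(0,T;H)}.
\]
The identification $[L^{2}(0,T;H),\,\hzj(0,T;H)]_{\beta} = {}_{0}H^{\beta}(0,T;H)$ works verbatim in the vector-valued setting. Then the Heinz--Kato inequality (\ref{YGa}) applies equally to $H$-valued functions (the underlying self-adjoint operator $\partial^{\beta}$ on $L^{2}(0,T;H)$ is obtained by tensorizing the scalar one with $\mathrm{id}_{H}$, so its fractional powers have the same norm), yielding (\ref{novaH}).

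For the convergence (\ref{popnowab}), I would repeat the three-term decomposition from the scalar proof. Fixing $\varepsilon>0$ and using the density of ${}_{0}C^{1}([0,T];H) = \{u\in C^{1}([0,T];H): u(0)=0\}$ in ${}_{0}H^{\beta}(0,T;H)$ (valid for vector-valued Bochner spaces by the same truncation/mollification argument as in \cite{Ya}), choose $\hat{f}\in {}_{0}C^{1}([0,T];H)$ with $\| \partial^{\beta}f - \partial^{\beta}\hat{f}\|_{L^{2}(0,T;H)} \leq \varepsilon/13$. Split
\[
\| \rb \Pi_{n}f - \rb f \|_{L^{2}(0,T;H)}
\leq \| \rb \Pi_{n}(f-\hat f)\|_{L^{2}(0,T;H)} + \| \rb(\Pi_{n}\hat f - \hat f)\|_{L^{2}(0,T;H)} + \| \rb(\hat f - f)\|_{L^{2}(0,T;H)}.
\]
The first and third terms are bounded by $(2e^{2\pi\sqrt{\beta(1-\beta)}}+1)\varepsilon/13$ via (\ref{novaH}), uniformly in $\beta\in[0,\delta]$. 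For the middle term, since $\hat f\in {}_{0}C^{1}([0,T];H) \subset {}_{0}H^{1}(0,T;H)$, the vectorial Heinz--Kato inequality together with the estimate $\|\Pi_{n}\hat f - \hat f\|_{{}_{0}H^{\beta}(0,T;H)} \leq [1+(1-\beta)^{-1}c(T)]^{1/2}\|\Pi_{n}\hat f - \hat f\|_{{}_{0}H^{1}(0,T;H)}$ (Hardy--Littlewood inequality applied to $\|\hat f\|_{H}$) bounds it by $\varepsilon/13$ for $n$ large, uniformly in $\beta\in[0,\delta]$.

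The only mildly delicate point is the uniformity over $\beta\in[0,\delta]$: the Heinz--Kato constants $e^{\pi\sqrt{\beta(1-\beta)}}$ are automatically bounded on $[0,\delta]$ for $\delta<1$, and the interpolation constant in the embedding ${}_{0}H^{1}\hookrightarrow {}_{0}H^{\beta}$ stays bounded away from the singularity at $\beta=1$. No new phenomenon arises beyond the scalar case.
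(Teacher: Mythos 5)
Your proposal is correct and follows essentially the same route as the paper, which proves this proposition simply by asserting that the scalar arguments of proposition~\ref{estiapprox} and the subsequent convergence result carry over to $H$-valued functions via the vector-valued interpolation of remark~11.5 in \cite{Lions}. You merely spell out the details the paper leaves implicit (the $H$-valued endpoint bounds via Young's inequality, the tensorized Heinz--Kato estimate, and the three-term density argument), so there is nothing genuinely different to compare.
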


}

\subsection*{Acknowledgment}

The research leading to these results has been supported by
the People Programme (Marie Curie Actions) of the European
Union's Seventh Framework Programme FP7/2007-2013/ under REA grant
agreement no 319012 and the Funds for
International Co-operation under Polish Ministry of Science and Higher
Education grant agreement no 2853/7.PR/2013/2.
Both authors are partially supported by Grants-in-Aid for Scientific
Research (S) 15H05740 and (S) 26220702, Japan Society for the
Promotion of Science.

\end{document}